\font\msbm=msbm10
\theoremstyle{plain}
\newtheorem{theorem}{Theorem}[section]
\newtheorem{lemma}[theorem]{Lemma}
\newtheorem{corollary}[theorem]{Corollary}
\newtheorem{proposition}[theorem]{Proposition}
\theoremstyle{definition}
\newtheorem{remark}[theorem]{Remark}
\def\mathbb#1{\hbox{\msbm{#1}}}
\newcommand{\field}[1]{\ensuremath{\mathds{#1}}}
\newcommand{\re}{\field R}\newcommand{\N}{\field N}
\newcommand{\zz}{\ensuremath{\mathbb{Z}}}\newcommand{\C}{\ensuremath{\mathbb{C}}}
\newcommand{\tor}{\field T} 
\newcommand{\Z}{\field Z}
\newcommand{\R}{\field R}
\newcommand{\T}{\field T}
\newcommand{\id}{\ensuremath{\mathrm{id}}}
\newcommand{\Id}{\ensuremath{\mathrm{Id}}}
\newcommand{\rank}{\ensuremath{\mathrm{rank}}}
\newcommand{\sign}{{ \ensuremath{\mbox{\rm sign} \,} }}
\newcommand{\be}{\begin{equation}}
\newcommand{\ee}{\end{equation}}
\newcommand{\beq}{\begin{eqnarray}}
\newcommand{\beqq}{\begin{eqnarray*}}
\newcommand{\eeq}{\end{eqnarray}}
\newcommand{\eeqq}{\end{eqnarray*}}
\DeclareMathOperator{\vol}{vol}
\newcommand{\Sphere}{\ensuremath{\mathds S^{d-1}}}
\begin{document}

\title{Counting via entropy: new preasymptotics for the approximation numbers of Sobolev embeddings}

\author{Thomas K\"uhn, Sebastian Mayer, Tino Ullrich}
\maketitle

\begin{abstract}
We study the optimal linear $L_2$-approximation by operators of finite rank (i.e., approximation numbers) for the isotropic periodic Sobolev space $H^s(\T^d)$ of fractional smoothness on the $d$-torus. For a family of weighted norms, which penalize Fourier coefficients $\hat f(k)$ by a weight $w_{s,p}(k) = (1+\|k\|_p^p)^{s/p}$, $0 < p \leq \infty$, we prove that the $n$-th approximation number of the embedding $\Id: H^s(\T^d) \to L_2(\T^d)$ is characterized by the $n$-th (non-dyadic) entropy number of the embedding $\id: \ell_p^d \to \ell_{\infty}^d$ raised to the power $s$. From the known behavior of these entropy numbers we gain a complete understanding of the approximation numbers in terms of $n$ and $d$ for all $n \in \N$ and $d \in \N$.
\end{abstract}

\section{Introduction}\label{sec:intro}

Approximation numbers, also known as linear $n$-width, are one of the fundamental concepts in approximation theory. In
Hilbert space settings, they describe the worst-case error that occurs when we approximate a class of functions by
projecting them onto the optimal finite-dimensional subspace. Hence, approximation numbers are also of interest in the
numerical analysis of partial differential equations (PDE) as they provide reliable a-priori error estimates for
certain Galerkin methods. In this context, approximation numbers related to
isotropic Sobolev spaces, Sobolev spaces of mixed regularity, and spaces of Gevrey type appear naturally.
Subject of this paper are preasymptotic bounds for these approximation numbers, which substantially improve the known error bounds in high-dimensional settings.


Bounds, which describe the decay in the rank $n \in \N$ of the optimal
projection operator, have been known for decades for the aforementioned approximation numbers. In high-dimensional settings, where the functions to be
approximated depend on a large number of variables $d \in \N$, these classical bounds become problematic. They only inadequately
capture the effects of the approximation problem's dimensionality $d$. We state this issue precisely in the
subsequent Sections \ref{sec:intro:isotropic}, \ref{sec:intro:gevrey}, and \ref{sec:intro:enegry}. For the moment, we
only stress that this has consequences in at least two respects when the dimension $d$ becomes large. For one, the
classical bounds are trivial until $n$ is exponentially large, say $n > 2^d$. This severely limits their applicability.
Moreover, from the viewpoint of information-based complexity, it is impossible to determine the tractability of the
approximation problem rigorously. To address any of the two issues, a first step is to 
uncover how the equivalence constants in the classical error bounds depend on the dimension $d$. As it turns out this is
often not enough. In fact, one has to determine explicitly how the approximation numbers behave
\emph{preasymptotically}, that is, for small $n < 2^d$. This typically involves to find good estimates for complicated
combinatorial problems that evolve from the structure of the smoothness spaces. As we will see, the preasymptotic
behavior can be completely different from the asymptotic behavior.

\subsection{An abstract characterization result: counting via entropy}

The essence of this paper is that for certain relevant approximation numbers of periodic isotropic Sobolev spaces and
periodic spaces of Gevrey type, it is not necessary to compute preasymptotics by hand. Instead we exploit that the
approximation numbers can be determined by covering certain $\ell_p^d$-unit balls with $\ell_\infty^d$-balls, a problem
which is already well understood. In fact, this turns out to be just a special case of a general characterization
result. This holds true for Sobolev type spaces $H^{\mathbf{w}}(\T^d)$ defined on the $d$-torus $\T^d=[0,2\pi]^d$, where
the smoothness weights $\mathbf{w} = (w(k))_{k \in \Z^d}$ take a special form. The spaces $H^{\mathbf{w}}(\T^d)$ are
given by
\begin{align*} 
 H^{\mathbf{w}}(\T^d) = \big\{f \in L_2(\T^d): \sum_{k \in \Z^d} w(k)^2 |c_k(f)|^2 < \infty \big\}
\end{align*}
where $c_k(f)$ denotes the $k$th Fourier coefficient. The weights in the sequence $\mathbf{w}$ are of the form
\begin{align}\label{eq:general_weight}
w(k) = \varphi(\|k\|), \quad k \in \Z^d,
\end{align}
where $\|\cdot\|$ is a (quasi-)norm on $\R^d$ and $\varphi$ a univariate, monotonically increasing function $\varphi$
with $\varphi(0)=1$. The characterization results now states that the approximation numbers associated to the embedding
$\Id: H^{\mathbf{w}}(\T^d) \to L_2(\T^d)$ are bounded for all $n \in \N$ from above and below as 
\begin{align}\label{eq:intro_entropy} 
 1/\varphi(2/\varepsilon_n) \leq a_n(\Id: H^{\mathbf{w}}(\T^d) \to L_2(\T^d)) \leq
1/\varphi(1/(4\varepsilon_n)). 
\end{align}
Here, $\varepsilon_n = \varepsilon_n(\id: \ell_{\|\cdot\|}^d \to \ell_\infty^d)$ are \emph{entropy numbers}. They give
the smallest $\varepsilon > 0$ such that the $\ell_{\|\cdot\|}^d$-unit ball can be covered by $n$ $\ell_\infty^d$-balls
of radius $\varepsilon$. For the precise statement, see Theorem \ref{res:approx_entropy_mono}.


\subsection{Preasymptotics for isotropic Sobolev spaces}\label{sec:intro:isotropic}
The first concrete application of the abstract result \eqref{eq:intro_entropy} yields results for the isotropic
Sobolev space $H^s(\T^d)$ with fractional smoothness $s > 0$. 
Isotropic Sobolev regularity is the natural notion of regularity for solutions of general elliptic PDEs, typically the
solution  will be contained in $H^1(\T^d)$ or $H^2(\T^d)$. The space $H^s(\T^d)$ can be defined as $H^s(\T^d) = H^{\mathbf{w}_{s,2}}(\T^d)$, where the weight sequence $\mathbf{w}_{s,2}$ is given by $w_{s,2}(k) = (1+\|k\|_2^2)^{s/2}$. Note that the norm $\|f|H^s(\T^d)\| = \sqrt{\sum_{k \in \Z^d} w_{s,2}(k)^2 |c_k(f)|^2}$ is natural in the sense that if $s \in \N$, then this norm  is equivalent up to a constant in $s$ to the classical norm, which is defined in terms of the $L_2$-norms of the derivatives up to order $s$. 

Concerning the approximation numbers $a_n (\Id :
H^s(\T^d) \to L_2(\T^d))$,
the exact asymptotic decay in $n$ has been known for decades. In 1967, J. W. Jerome \cite{J67} proved that
\begin{align}\label{eq:appr_num_asymp}
 c_{s,d} \; n^{-s/d} \leq a_n (\Id : H^s(\T^d) \to L_2(\T^d)) \leq C_{s,d} \; n^{-s/d},
\end{align}
with constants $c_{s,d}$ and $C_{s,d}$ that were merely known to depend on the fractional smoothness $s$ and the
dimension $d$. For further references and historical remarks in this direction, see the monographs by
Temlyakov \cite{Te93} and Tikhomirov \cite{Ti90}.

In order to obtain preasymptotics for \eqref{eq:appr_num_asymp} and clarify the $d$-dependence of the constants, we not
only consider the weights $\mathbf{w}_{s,2}$ but the family of weights $\mathbf{w}_{s,p}$ given by
\begin{align}
\label{def:iso_weights}
\begin{split}
\begin{array}{rcl}
 w_{s,p}(k) =& (1 + \sum_{j=1}^d |k_j|^p)^{s/p} &\text{ if } 0 < p < \infty, \text{ and }\\
 w_{s,p}(k) =& \max(1,|k_1|,\dots,|k_d|)^s &\text{ if } p=\infty.
\end{array}
\end{split}
\end{align}
For $0<p<1$, the weights $\mathbf{w}_{s,p}$ can be interpreted as imposing a \emph{compressibility constraint} on the
Fourier frequency vectors; the less $k \in \Z^d$ is aligned with one of the coordinate axes, the stronger the penalty
through a large weight $w_{s,p}(k)$. The function spaces $H^{s,p}(\T^d) := H^{\mathbf{w}_{s,p}}(\T^d)$ coincide as sets
with the classical isotropic Sobolev space. Applying the abstract result \eqref{eq:intro_entropy}, we immediately obtain
Theorem \ref{res:approx_numbers}.

\begin{theorem}\label{res:approx_numbers}
For $0< p \leq \infty$ and $s > 0$, we have
\begin{align*}
a_n (\Id:H^{s,p}(\T^d) \to L_2(\T^d)) \asymp_{s,p}
\begin{cases}
1 &: 1 \leq n \leq d\\
\left[ \frac{\log(1+d/\log n)}{\log n}\right]^{s/p} &: d \leq n \leq 2^d\\
d^{-s/p} n^{-s/d} &: n \geq 2^{d}.
\end{cases}
\end{align*}
\end{theorem}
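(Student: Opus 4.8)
\emph{Proof plan.} The plan is to obtain the statement directly from the abstract characterisation \eqref{eq:intro_entropy} (Theorem \ref{res:approx_entropy_mono}) by inserting the known behaviour of the entropy numbers $\varepsilon_n(\id:\ell_p^d\to\ell_\infty^d)$. First I would verify that the weights \eqref{def:iso_weights} fit the framework \eqref{eq:general_weight}: for $0<p<\infty$ one has $w_{s,p}(k)=\varphi(\|k\|_p)$ with $\varphi(t)=(1+t^p)^{s/p}$, and for $p=\infty$ one has $w_{s,\infty}(k)=\varphi(\|k\|_\infty)$ with $\varphi(t)=\max(1,t)^s$; here $\|\cdot\|_p$ is a quasi-norm on $\R^d$ (a norm for $p\ge1$), and $\varphi$ is continuous, increasing, with $\varphi(0)=1$, so Theorem \ref{res:approx_entropy_mono} applies and yields, for all $n,d\in\N$,
\[
 \frac1{\varphi(2/\varepsilon_n)}\ \le\ a_n\big(\Id:H^{s,p}(\T^d)\to L_2(\T^d)\big)\ \le\ \frac1{\varphi(1/(4\varepsilon_n))},\qquad \varepsilon_n:=\varepsilon_n(\id:\ell_p^d\to\ell_\infty^d).
\]

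Next I would collapse this double estimate to a single power of $\varepsilon_n$. Using $1+t^p\asymp_p(1+t)^p$ for $t\ge0$ (respectively $\max(1,t)\le1+t\le2\max(1,t)$ when $p=\infty$) gives $1/\varphi(t)\asymp_{s,p}(1+t)^{-s}$ uniformly in $t\ge0$. Since $B_p^d\subseteq B_\infty^d$ for every $p\le\infty$, the single $\ell_\infty^d$-ball of radius $1$ covers $B_p^d$, whence $\varepsilon_n\le1$ for all $n$, so $1+2/\varepsilon_n\asymp1/\varepsilon_n\asymp1+1/(4\varepsilon_n)$. Combining these with the previous display, I obtain
\[
 a_n\big(\Id:H^{s,p}(\T^d)\to L_2(\T^d)\big)\ \asymp_{s,p}\ \varepsilon_n(\id:\ell_p^d\to\ell_\infty^d)^{\,s}\qquad\text{for all }n,d\in\N.
\]

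Finally I would substitute the known asymptotics of the entropy numbers. For $0<p<\infty$: $\varepsilon_n\asymp_p1$ for $1\le n\le d$ (which is also elementary — a ball of $\ell_\infty^d$-radius $<\frac12$ contains at most one of the $2d$ vertices $\pm e_j$ of $B_p^d$, so $\varepsilon_n\ge\frac12$ for $n<2d$), $\varepsilon_n\asymp_p\big(\log(1+d/\log n)/\log n\big)^{1/p}$ for $d\le n\le2^d$, and $\varepsilon_n\asymp_p d^{-1/p}n^{-1/d}$ for $n\ge2^d$; raising to the power $s$ gives the three cases of the theorem. For $p=\infty$ the exponent $s/p$ degenerates to $0$: then $\varepsilon_n\asymp1$ for $n\le2^d$ (the $3^d$ frequencies in $\{-1,0,1\}^d$ all have weight $1$) and $\varepsilon_n\asymp n^{-1/d}$ for $n\ge2^d$, which is exactly the $p=\infty$ reading of the formula under the convention $(\cdot)^{s/\infty}=1$. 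I would also check consistency at the junction points $n=d$ and $n=2^d$ — e.g.\ $\log(1+d/\log n)/\log n\asymp1$ at $n=d$ and $\asymp1/d$ at $n=2^d$ — so that the piecewise description holds on all of $\N$.

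Apart from Theorem \ref{res:approx_entropy_mono}, the only real input is the preasymptotic (range $d\le n\le2^d$) behaviour of $\varepsilon_n(\id:\ell_p^d\to\ell_\infty^d)$ in a $d$-explicit, non-dyadic form; this encodes the combinatorics of covering an $\ell_p^d$-ball by $\ell_\infty^d$-cubes and is the part that has to be established or quoted separately. Everything else is constant bookkeeping, so I expect the entropy-number step to be the main obstacle.
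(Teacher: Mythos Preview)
Your proposal is correct and follows essentially the same route as the paper: apply Theorem \ref{res:approx_entropy_mono} with $\varphi(t)=(1+t^p)^{s/p}$ (respectively $\varphi(t)=\max\{1,t\}^s$ for $p=\infty$) to obtain $a_n\asymp_{s,p}\varepsilon_n(\id:\ell_p^d\to\ell_\infty^d)^s$, and then insert Proposition~\ref{res:entropy_p} (respectively Proposition~\ref{res:entropy_infty}). The paper's proof is precisely this two-step argument, written in a few lines; your additional remarks on $\varepsilon_n\le1$, on the junction points, and on the $p=\infty$ reading are helpful sanity checks but not structurally different.
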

\noindent It is clearly visible how a smaller compressibility parameter $p$ makes the approximation problem easier by amplifying the preasymptotic logarithmic decay in $n$. The equivalence constants in Theorem \ref{res:approx_numbers} depend only on $s$ and $p$ and can be completely
controlled. In particular, we have the limit result
\[ 
 \lim_{n \to \infty} n^{s/d} a_n(\Id: H^{s,p}(\T^d) \to L_2(\T^d)) = (\vol(B_p^d))^{s/d} \asymp d^{-s/p}, 
\]
see Corollary \ref{res:iso_asymp_constant}. A consequence of Theorem \ref{res:approx_numbers} is that we face the
\emph{curse of dimensionality} in the strict sense of information-based complexity if and only if $p=\infty$. Otherwise,
the approximation problem is \emph{weakly tractable}, despite the slow asymptotic decay $n^{-s/d}$. For details, see
Section \ref{sec:tractability}.

\subsection{Spaces of Gevrey type and a connection to hyperbolic cross spaces}\label{sec:intro:gevrey}
The classes of smooth functions that are nowadays called \emph{Gevrey classes} were already introduced in 1918 by 
M. Gevrey \cite{G1918}, they occurred in a natural way in his research on partial differential equations.
Since then they have played an important role in numerous applications, in particular in connection with Cauchy
problems. 
The recent paper \cite{KP15} introduces the periodic \emph{spaces of Gevrey type} 
$G^{\alpha,\beta,p}(\T^d)$, $0<\alpha,\beta,p<\infty$, which
consist of all $f\in C^\infty(\mathbb{T}^d)$ such that the norm
$$
\Vert f |G^{\alpha,\beta,p}(\T^d)\Vert:=\left( \sum_{k\in\Z^d}\exp(2\beta\,\Vert k
\Vert_p^\alpha)|c_k(f)|^2\right)^{1/2}
$$
is finite. Here $c_k(f)$ denotes the Fourier coefficient with respect to the frequency vector $k=(k_1,...,k_d)\in \Z^d$,
defined in Section \ref{sec:preliminaries} below. For $0<\alpha<1$, the spaces $G^{\alpha,\beta,p}(\T^d)$ coincide with
the classical Gevrey classes and contain
non-analytic functions, while for $\alpha\ge 1$ all functions in $G^{\alpha,\beta,p}(\T^d)$ are analytic. Some more
background on Gevrey classes and references can be found in Section \ref{sec:gevrey}.

In Theorem \ref{res:gevrey} we prove lower and upper bounds for the approximation numbers $a_n$ of the embedding $\Id:
G^{\alpha,\beta,p}(\T^d) \to L_2(\T^d)$ for all $n \in \N$ and arbitrary parameter values $\alpha,\beta > 0$, $0<p\leq \infty$. Due to our proof technique we can determine the rate of convergence only up to a constant. However, for $0<p\leq \infty$ and $\alpha<\min\{1,p\}$, we at least obtain an indication for the correct asymptotic
behavior by the limit statement
\[ 
 \lim_{n \to \infty} a_n \cdot \exp(\lambda \beta n^{\alpha/d}) = 1,
\]
where $\lambda:=\vol(B_p^d)^{-\alpha/d}$, see Theorem \ref{limit:gevrey}. 

What concerns preasymptotics the bounds turn out to be rather surprising in the particular situation $\alpha = p$. For $1 \leq n \leq 2^d$, we
obtain the two-sided estimate
\begin{align}\label{GinL2}
 n^{-\frac{c_1(p) \beta}{\log(1+d/\log(n))}}
  \leq a_n(\Id: G^{p,\beta,p}(\T^d) \to L_2(\T^d)) \leq
  n^{-\frac{c_2(p) \beta}{\log(1+d/\log(n))}} \leq n^{-\frac{c_2(p) \beta}{\log(1+d)}}.
\end{align}
This estimate is almost identical to the preasymptotic estimate which has been obtained in the recent paper
\cite{KSU15} (see also \eqref{eq:mixed_quasi_polynomial_decaya} below) for approximation numbers of the embeddings
\[
 \Id: H^r_{\text{mix}}(\T^d) \to L_2(\T^d),
\]
where $H^r_{\text{mix}}(\T^d)$ is the Sobolev space of dominating mixed smoothness equipped with the norm
\[ 
 \|f | H^r_{\text{mix}}(\T^d)\| := \bigg[ \sum_{k \in \Z^d} |c_k(f)|^2 \prod_{j=1}^d (1+ |k_j|^2)^r \bigg]^{1/2}.
\]
It is rather counterintuitive that the approximation numbers behave almost identically in the preasymptotic range. After
all, the spaces of Gevrey type $G^{p,\beta,p}(\T^d)$ contain substantially smoother functions than the space
$H^r_{\text{mix}}(\T^d)$. We discuss this in more detail in Section \ref{sec:preasymptotic_similarity} and give at least
partial explanations for this odd phenomenon.

\subsection{Preasymptotics for embeddings into $\mathbf{H^s}$}\label{sec:intro:enegry} 

Yserentant \cite{Y2010} proved that eigenfunctions of the positive spectrum of the electronic Schr\"odinger
operator possess a dominating mixed regularity. To solve the electronic Schr\"odinger equation numerically, Galerkin methods combined with sparse grid techniques \cite{GH2007,GKZ2007} are widely used. The discussion in
Subsection \ref{sec:intro:galerkin} below shows that one is particularly interested in measuring the error in the {\it
energy space} $H^1$. From results in \cite{GK2009} it follows that 
\begin{align}\label{eq:Galerkin_energy} 
  c_d n^{-(r-1)} \leq a_n(H^r_{\text{mix}}(\T^d) \to  H^1(\T^d)) \leq C_d n^{-(r-1)},
\end{align}
with constants $c_d$, $C_d$ depending on the dimension $d$. In \cite{DU13,GK2009} it has been observed that
$C_d = d^20.97515^d$. This result suggests that the truncation problem even gets easier with a
growing number of electrons. However, as \cite{DU13} shows, the constant $C_d$
can only be chosen as above for exponentially large $n > (1+\gamma)^d$. This raises the question how the approximation numbers in \eqref{eq:Galerkin_energy} behave preasymptotically.

Unfortunately, the abstract result \eqref{eq:intro_entropy} cannot be applied to obtain preasymptotics for
\eqref{eq:Galerkin_energy}, since the space $H^r_{\text{mix}}(\T^d)$ cannot be written as a space $H^{\mathbf{w}}(\T^d)$
with a weight sequence $\mathbf{w}$ of the form \eqref{eq:general_weight}. However, the observations described in the previous subsection and results in \cite{KSU15} give an indication for the preasymptotic behavior. The results in \cite[Thm. 4.9, 4.10, 4.17]{KSU15} provide the two-sided estimate
\begin{align}\label{eq:mixed_quasi_polynomial_decaya} 
 2^{-r} \left(\frac{1}{2n}\right)^{\frac{r}{2 + \log(1/2+d/\log(n))}} \leq a_n ( \Id: H^{r}_{\text{mix}}(\T^d)
\to L_2(\T^d) ) \leq \left( \frac{e^2}{n} \right)^{\frac{r}{4 + 2\log_2 d}}
\end{align}
in the preasymptotic range $1 \leq n \leq 4^d$.
With the coincidence $$a_n(\Id:H^{r}_{\text{mix}}(\T^d) \to H^s_{\text{mix}}(\T^d)) = a_n(\Id:H^{r-s}_{\text{mix}}(\T^d)
\to L_2(\T^d)),$$
provided $r>s>0$, we obtain from \eqref{eq:mixed_quasi_polynomial_decaya} by embedding 
\begin{align}\label{eq:mixed_quasi_polynomial_decay2} 
a_n ( \Id: H^{r}_{\text{mix}}(\T^d)
\to H^s(\T^d) ) \leq \left( \frac{e^2}{n} \right)^{\frac{r-s}{4 + 2\log_2 d}}.
\end{align}
The connection between spaces of Gevrey type and spaces of dominating mixed smoothness sketched in Subsection \ref{sec:intro:gevrey}
(see also Subsection \ref{sec:preasymptotic_similarity} below), might be useful to refine the result
\eqref{eq:mixed_quasi_polynomial_decay2}. Indeed, for spaces of Gevrey type we obtain the following result as a
consequence of our abstract technique. For $1 \leq n \leq 2^d$, we have
\begin{equation}
  c_1\Lambda(n,d)^{s/2} n^{-\frac{c_1 \beta}{\gamma(n,d)}}
 \leq a_n(\Id:G^{2,r/2,2}(\T^d) \to H^s(\T^d)) \leq
  c_2 \Lambda(n,d)^{s/2} n^{-\frac{c_2 r}{\gamma(n,d)}},
\end{equation}
where $\Lambda(n,d) = \frac{\log(n)}{\gamma(n,d)}$ and $\gamma(n,d) = \log(1+d/\log(n))$; compare with \eqref{GinL2}.

\subsection{Approximation numbers and Galerkin methods}\label{sec:intro:galerkin}
To conclude this introduction, let us outline the connection between approximation numbers and reliable a-priori error
estimates for Galerkin methods. Consider a general elliptic variational problem in $H^s = H^s(\T^d)$, which is given by
a bilinear symmetric form $a: H^s \times H^s \to \R$ and a right-hand side $f \in H^{-s}$. The bilinear symmetric form
is assumed to satisfy, for any $u,v \in H^s$,
\[
a(u,v) \le \mu_1 \|\,u\, | H^s \| \|\,v\, | H^s\| \  {\rm and} \ a(u,u) \ge \mu_2 \|\,u\, | H^s \|^2.
\]
Under this assumption, $a(\cdot,\cdot)$ generates the so called \emph{energy norm equivalent} to the norm of $H^s$.
The problem now is to find an element $u \in H^s$ such that
\begin{equation} \label{eq[a(u,v)]}
a(u,v)
\ = \
(f,v) \ {\rm for \ all} \ v \in H^s.
\end{equation} 
In order to get an approximate numerical solution Galerkin methods solve the same problem on a finite dimensional
subspace $V_h$ in $H^s$,
\begin{equation} \label{eq[a(u_h,v)]}
a(u_h,v)
\ = \
(f,v) \ {\rm for \ all} \ v \in V_h.
\end{equation} 
By the Lax-Milgram theorem \cite{LM54}, the problems \eqref{eq[a(u,v)]} and \eqref{eq[a(u_h,v)]} have unique solutions 
$u^*$ and $u^*_h$, respectively, which by C\'ea's lemma \cite{C64}, satisfy the inequality 
\begin{align}\label{eq:cea}
\|u^* - u^*_h\,|H^s\|
\ \le (\mu_1/\mu_2) \inf_{v \in V_h} \|u^* - v\,|H^s\|.
\end{align}
The naturally arising question is how to choose the optimal $n$-dimensional subspace
$V_h$ and linear finite element approximation algorithms such that the right-hand side in \eqref{eq:cea} becomes as
smalls as possible. Under the assumption that the solution $u^*$ is contained in the unit ball of some smoothness space
$U \subset H^s$, the minimal right-hand side  in \eqref{eq:cea} is bounded from above by the approximation number
$a_n(\Id: U \to H^s)$. Summarizing,
\begin{align*}
\|u^* - u^*_h \,|H^s\|
\ \le (\mu_1/\mu_2) a_n(\Id: U \to H^s)
\end{align*}
gives a worst-case a-priori error estimate for the optimal $n$-dimensional subspace $V_h$.

\section{Preliminaries}\label{sec:preliminaries}

\paragraph*{Notation} As usual, the set $\N$ denotes the natural numbers, $\zz$ the integers and
$\re$ the real numbers. By $\tor$ we denote the torus represented by the interval $[0,2\pi]$ where opposite points are
identified. A function $f:\T^d\to \C$ is $2\pi$-periodic in every component. If $f\in L_2(\T^d)$ then the Fourier
coefficient $c_k(f)$ with respect to the frequency vector $k=(k_1,...,k_d) \in \Z^d$ is given by
$$
  c_k(f) = (2\pi)^{-d/2}\int_{\T^d} f(x)e^{-ik\cdot x}\,dx\,.
$$  
For a real number $a$ we put $a_+ := \max\{a,0\}$.
The symbol $d$ is always reserved for the dimension in $\Z^d$, $\R^d$, $\N^d$, and $\T^d$.
For $0<p\leq \infty$ and $x\in \R$ we denote $\|x\|_p = (\sum_{i=1}^d |x_i|^p)^{1/p}$ with the
usual modification in the case $p=\infty$. We write $\ell_p^d$ for $\R^d$ equipped with the norm $\|\cdot\|_p$. By
$B_p^d$ we denote the closed unit ball of $\ell_p^d$. When we write $\log$, we always mean the logarithm to base
$2$.
If $X$ and $Y$ are two Banach spaces, the norm
of an element $x$ in $X$ will be denoted by $\|x|X\|$ and the norm of an operator
$A:X \to Y$ is denoted by $\|A:X\to Y\|$. The symbol $X \hookrightarrow Y$ indicates that the
embedding operator is continuous.

\paragraph*{Approximation numbers} Let $X,Y$ be two (quasi-)Banach spaces.
The $n$-th approximation number of an operator $T: X \to Y$ is defined by
\be\label{def:approximation_number}
 \begin{split}
a_n (T : X \to Y) &:= \inf\limits_{\rank A<n} \,\sup\limits_{\|f|X\|\leq 1}\|\, Tf -Af|Y\|\\
&= \inf\limits_{\rank A<n}\|T-A : X \to Y\|.
 \end{split}
\ee

\paragraph*{Covering and entropy numbers}
Let $A \subset \R^d$. An \emph{$\varepsilon$-net} for $A$ is a discrete set of points $x_1,\dots,x_n$ in $\R^d$ such that
\[
 A \subseteq \bigcup_{i=1}^n (x_i + \varepsilon B_{\infty}^d).
\]
The \emph{covering number} $N_{\varepsilon}(A)$ is the minimal natural number $n$ such that there is an $\varepsilon$-net for $A$.
Inverse to the covering numbers $N_{\varepsilon}(A)$ are the (non-dyadic) \emph{entropy numbers}
\[
 \varepsilon_n(A, \ell_\infty^d) := \inf \{ \varepsilon > 0: N_{\varepsilon}(A) \leq n\}.
\]
If $A = B_{\|\cdot\|}^d = \{ x\in \R^d: \|x\| \leq 1\}$ is a unit ball, we also use the notation
\[
  \varepsilon_n(\id: \ell_{\|\cdot\|}^d \to \ell_\infty^d) := \varepsilon_n(B_{\|\cdot\|}^d,\ell_\infty^d).
\]
In the applications which we have in mind $\|\cdot\|$ will be a classical (quasi-)norm $\|\cdot\| = \|\cdot\|_p$ for
$0<p\leq \infty$. In this case, the behavior in $n$ and $d$ of the entropy numbers $\varepsilon_n(\id: \ell_p^d \to
\ell_\infty^d)$ is completely understood \cite{EdTr96, Ku01, RS11, Sch84, Tr97}. For the reader's convenience, we
restate the results.

\begin{proposition}\label{res:entropy_infty}
For all $n \in \N$, we have
\[ 
 n^{-1/d} \leq \varepsilon_n(id: \ell_{\infty}^d \to \ell_{\infty}^d) \leq 2 n^{-1/d}.
\]
However, $\varepsilon_n(id: \ell_{\infty}^d \to \ell_{\infty}^d) = 1$ as long as $n < 2^d$.
\end{proposition}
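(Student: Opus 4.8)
The plan is to treat the power bound $n^{-1/d}\le\varepsilon_n\le 2n^{-1/d}$ and the plateau $\varepsilon_n=1$ for $n<2^d$ separately: the former by a volume comparison (for the lower bound) together with an explicit grid covering (for the upper bound), the latter by a vertex–separation argument, since the volume bound alone is too coarse in that range.

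For the lower bound, suppose $x_1,\dots,x_n\in\R^d$ is an $\varepsilon$-net for $B_\infty^d=[-1,1]^d$. Then $B_\infty^d\subseteq\bigcup_{i=1}^n(x_i+\varepsilon B_\infty^d)$, and comparing $d$-dimensional Lebesgue measures gives $2^d=\vol(B_\infty^d)\le n\,(2\varepsilon)^d$, hence $\varepsilon\ge n^{-1/d}$. Since this holds for every $\varepsilon>0$ with $N_\varepsilon(B_\infty^d)\le n$, we obtain $\varepsilon_n(\id:\ell_\infty^d\to\ell_\infty^d)\ge n^{-1/d}$. For the upper bound, set $m:=\lfloor n^{1/d}\rfloor\ge 1$. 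Splitting $[-1,1]$ into $m$ subintervals of length $2/m$ and taking their midpoints produces a $(1/m)$-net of $[-1,1]$ of cardinality $m$; the $d$-fold Cartesian product of this net is then a $(1/m)$-net of $B_\infty^d$ of cardinality $m^d\le n$. Hence $N_{1/m}(B_\infty^d)\le n$ and $\varepsilon_n(\id:\ell_\infty^d\to\ell_\infty^d)\le 1/m$. To turn this into the clean bound $2n^{-1/d}$ I would invoke the elementary inequality $\lfloor t\rfloor\ge t/2$, valid for all real $t\ge 1$ (clear for $1\le t<2$, since then $\lfloor t\rfloor=1>t/2$, and for $t\ge 2$ since $\lfloor t\rfloor\ge t-1\ge t/2$); applied with $t=n^{1/d}$ it gives $\varepsilon_n\le 1/\lfloor n^{1/d}\rfloor\le 2n^{-1/d}$.

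For the plateau, first note that the single point $0$ covers $B_\infty^d$ at radius $1$, so $N_1(B_\infty^d)=1\le n$ and thus $\varepsilon_n\le 1$ for every $n\in\N$. For the reverse inequality in the range $n<2^d$, consider the $2^d$ vertices of the cube, i.e.\ the points of $\{-1,1\}^d$: any two distinct vertices differ in at least one coordinate by $2$, so their $\ell_\infty$-distance equals $2$, whereas any translate $x+\varepsilon B_\infty^d$ with $\varepsilon<1$ has $\ell_\infty$-diameter $2\varepsilon<2$ and therefore contains at most one vertex. Consequently $N_\varepsilon(B_\infty^d)\ge 2^d>n$ for every $\varepsilon<1$, which forces $\varepsilon_n\ge 1$, and hence $\varepsilon_n=1$ whenever $1\le n<2^d$.

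I do not expect any genuine obstacle; the argument is elementary. The only points requiring a little care are cosmetic: obtaining the constant $2$ (rather than something messier coming from the floor function) in the upper bound, which the inequality $\lfloor t\rfloor\ge t/2$ handles, and observing that the volume bound by itself only yields $\varepsilon_n\ge n^{-1/d}>1/2$ for $n<2^d$, so that the separate vertex argument is genuinely needed to pin the value down to exactly $1$. It is also worth checking that the two regimes are consistent: the vertex argument is vacuous for $n\ge 2^d$, while there the grid bound already gives $\varepsilon_n\le 2n^{-1/d}\le 1$.
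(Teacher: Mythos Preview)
Your proof is correct and complete. Note, however, that the paper does not actually prove this proposition: it is stated as a known result restated from the literature (with references to Edmunds--Triebel, K\"uhn, Raskutti--Schmidt, Sch\"utt, Triebel), so there is no ``paper's own proof'' to compare against. Your argument is the standard one: a volume comparison for the lower bound, an explicit product grid for the upper bound, and a vertex-separation argument for the plateau at $\varepsilon_n=1$ when $n<2^d$.
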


\begin{proposition}\label{res:entropy_p} Let $0 < p < \infty$. Then,
$$
\varepsilon_n(id: \ell_p^d \to \ell_{\infty}^d) \asymp \left\{
\begin{array}{rcl}1&:&1\leq n \leq d,\\
\Big(\frac{\log(1+d/\log n)}{\log n}\Big)^{1/p}&:& d \leq n \leq 2^d,\\
d^{-1/p} n^{-1/d}&:& n \geq 2^d\,,
\end{array}
\right.
$$
with constants independent of $n$ and $d$.
\end{proposition}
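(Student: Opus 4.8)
The plan is to treat three ranges of $n$ separately. In each, the lower bound comes from a large $\ell_\infty^d$-separated subset of $B_p^d$ (or from volume) and the upper bound from an explicit net; the equivalence constants will depend on $p$ but not on $n,d$. The range $1\le n\le d$ is immediate: since $\|x\|_\infty\le\|x\|_p$ we have $B_p^d\subseteq[-1,1]^d$, so $\varepsilon_n\le\varepsilon_1=1$, while the $d+1$ points $0,e_1,\dots,e_d$ lie in $B_p^d$ and are pairwise at $\ell_\infty^d$-distance $1$, so any cube of radius $<1/2$ contains at most one of them, giving $N_\varepsilon(B_p^d)\ge d+1$ for $\varepsilon<1/2$ and hence $\varepsilon_n(\id:\ell_p^d\to\ell_\infty^d)\ge1/2$ for $n\le d$.

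For $n\ge2^d$ a volume argument is sharp on both sides. If $N$ cubes of side $2\varepsilon$ cover $B_p^d$, then $N(2\varepsilon)^d\ge\vol(B_p^d)$, so $\varepsilon_n\ge\tfrac12(\vol(B_p^d)/n)^{1/d}$; conversely, for $\eta$ small the inclusion $B_\infty^d\subseteq d^{1/p}B_p^d$ gives $B_p^d+\tfrac{\eta}{2}B_\infty^d\subseteq\tfrac32B_p^d$, so a maximal $\eta$-separated subset of $B_p^d$ has at most $(\tfrac32)^d\vol(B_p^d)\eta^{-d}$ points and $N_\eta(B_p^d)\le(\tfrac32)^d\vol(B_p^d)\eta^{-d}$, whence $\varepsilon_n\le\tfrac32(\vol(B_p^d)/n)^{1/d}$. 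As $\vol(B_p^d)=(2\Gamma(1+1/p))^d/\Gamma(1+d/p)$, Stirling yields $\vol(B_p^d)^{1/d}\asymp_p d^{-1/p}$, so both sides are $\asymp_p d^{-1/p}n^{-1/d}$; the short gap where $\eta$ fails to be small enough (roughly $2^d\le n\le C_p^d$) is closed by monotonicity of $\varepsilon_n$ and its value at $n=2^d$ from the middle range.

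The core is $d\le n\le2^d$. \emph{Upper bound:} fix $m\le d$, set $\eta=(m+1)^{-1/p}$. The decreasing rearrangement of any $x\in B_p^d$ satisfies $|x_{(k)}|\le k^{-1/p}$, so deleting all but the $m$ largest coordinates perturbs $x$ by at most $\eta$ in $\ell_\infty^d$, and therefore $N_\eta(B_p^d)\le\binom{d}{m}N_\eta(B_p^m)$. Since $\eta\asymp m^{-1/p}$ we have $B_p^m+\tfrac{\eta}{2}B_\infty^m\subseteq c_pB_p^m$, so $N_\eta(B_p^m)\le c_p^m\vol(B_p^m)\eta^{-m}\le C_p^m$ by Stirling, giving $\log N_\eta(B_p^d)\le m\log(ed/m)+m\log C_p\lesssim_p\eta^{-p}\log(1+d\eta^p)$. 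The precise form of the answer pops out of inverting this: the elementary estimate $\log(1+x\log(1+x))\asymp\log(1+x)$ for $x\ge1$ shows that $\eta^p=C_p\frac{\log(1+d/\log n)}{\log n}$ makes $\eta^{-p}\log(1+d\eta^p)\le\log n$ for a suitable $C_p$, so $\varepsilon_n\le\eta\asymp_p\big(\frac{\log(1+d/\log n)}{\log n}\big)^{1/p}$. \emph{Lower bound:} put $\delta=c_p\frac{\log(1+d/\log n)}{\log n}$, $m=\lfloor1/\delta\rfloor$, and take the $\binom dm$ points $\delta^{1/p}\1_S$ over $m$-subsets $S\subseteq\{1,\dots,d\}$; each lies in $B_p^d$ (as $\delta m\le1$) and any two are at $\ell_\infty^d$-distance exactly $\delta^{1/p}$, so $N_\varepsilon(B_p^d)\ge\binom dm$ for $\varepsilon<\delta^{1/p}/2$. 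Using $\log\binom dm\ge m\log(d/m)$ and the same estimate, one checks $\binom dm>n$ once $n\le2^{c_p'd}$, which gives $\varepsilon_n\ge\delta^{1/p}/2\asymp_p\big(\frac{\log(1+d/\log n)}{\log n}\big)^{1/p}$ there; on the remaining range $2^{c_p'd}\le n\le2^d$ the target is $\asymp_p d^{-1/p}$, already delivered by $\varepsilon_n\ge\varepsilon_{2^d}\ge\tfrac12(\vol(B_p^d)/2^d)^{1/d}\asymp_p d^{-1/p}$.

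The main obstacle is the upper bound in the middle range. Rounding each coordinate to a grid of width $\eta$ produces about $\binom dm(C/\eta)^m$ candidates, giving only $\log N_\eta\lesssim_p\eta^{-p}\log(ed)$ and thus the too-weak $\varepsilon_n\lesssim_p(\log d/\log n)^{1/p}$. Getting the sharp factor $\log(1+d\eta^p)$ requires exploiting that the truncated ball $B_p^m$ has exponentially small volume $\asymp(c_p/m)^{m/p}$ — i.e. replacing the trivial per-coordinate count on the support by a volumetric estimate in $\R^m$ — and this gain is exactly what turns $\log d$ into $\log(1+d/\log n)$ after inversion. A secondary subtlety: the packing $\{\delta^{1/p}\1_S\}$ cannot alone reach $n=2^d$, since for $m$ of order $d$ the binomial $\binom dm$ is strictly (and usually much) smaller than $2^d$, so the top of the range must be covered by the volume lower bound.
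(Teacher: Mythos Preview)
The paper does not actually prove this proposition: it is stated there as a known result and attributed to the literature (Sch\"utt 1984, Edmunds--Triebel, K\"uhn 2001, Triebel, and others). So there is no ``paper's own proof'' to compare against.

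Your argument is essentially the classical one from those references. The three-range split, the packing $\{0,e_1,\dots,e_d\}$ for $n\le d$, the volume comparison for $n\ge 2^d$, and---most importantly---the sparse-approximation upper bound (keep the $m$ largest coordinates, cover $B_p^m$ by a volumetric net of cardinality $C_p^m$) together with the combinatorial packing $\{\delta^{1/p}\1_S:|S|=m\}$ for the lower bound in the middle range, are exactly Sch\"utt's strategy (extended to $0<p<1$ by K\"uhn). Your identification of the key step---that one must use $\vol(B_p^m)\asymp(c_p/m)^{m/p}$ rather than a coordinate-wise count to get $\log(1+d\eta^p)$ instead of $\log d$---is precisely the point of the sharp proof, and your handling of the top of the middle range ($2^{c_p'd}\le n\le 2^d$) via the volume lower bound is the standard patch. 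Two minor remarks: for $0<p<1$ the inclusion $B_p^d+\tfrac{\eta}{2}B_\infty^d\subseteq cB_p^d$ needs the $p$-triangle inequality rather than the ordinary one (giving $(1+r^p)^{1/p}$ instead of $1+r$), and the sparse approximation step yields a $2\eta$-net rather than an $\eta$-net; neither affects the asymptotics. Otherwise the sketch is correct.
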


\noindent The equivalence constants in Proposition \ref{res:entropy_p} are not further specified in the literature. It
is possible to calculate explicit, but rather lengthy expressions. We refrain from going more into detail at this point.
In Section \ref{sec:entropy}, we will comment on the behavior of the constants for $n \to \infty$.

\begin{remark}
The closely related entropy numbers $\varepsilon_n(\Sphere_p, \ell_\infty^d)$, where $\Sphere_p = \{ x \in \R^d: \|x\|_p
= 1\}$, have been understood only lately \cite{HM15, MUV13}. It is no surprise that these behave identically to the
entropy numbers $\varepsilon_n(id: \ell_p^d \to \ell_{\infty}^d)$, except that asymptotically they decay as
$n^{-1/(d-1)}$. To prove the bounds on $\varepsilon_n(\Sphere_p, \ell_\infty^d)$ one largely mimics the well-known proof
for $\varepsilon_n(id: \ell_p^d \to \ell_{\infty}^d)$. Surprisingly, there is one case where this strategy fails. For
$0<p<1$ and $n \geq 2^d$ the familiar volume arguments become inaccurate and it needs different techniques to obtain
matching bounds, see \cite{HM15}.
\end{remark}

\paragraph*{Notions of tractability} In the course of this paper we want to classify how the dimension $d$ affects the hardness of the approximation problem $\Id: H^{\mathbf{w}}(\T^d) \to L_2(\T^d)$ depending on the weight sequence $\mathbf{w}$. The field of information-based complexity provides notions of tractability \cite{NW1}, which rate the difficulty of the approximation problem in terms of how its information
complexity
\[
 n(\varepsilon,d) := \inf \{ n \in \N: a_n (\Id:H^{\mathbf{w}}(\T^d) \to L_2(\T^d)) \leq \varepsilon \}
\]
grows in $1/\varepsilon$ and $d$. Let us first note that for all weight sequences $\mathbf{w}$ considered in this paper
we have an initial error
\[
 a_1(\Id:H^{\mathbf{w}}(\T^d) \to L_2(\T^d)) = \|\Id:H^{\mathbf{w}}(\T^d) \to L_2(\T^d)\| = 1.
\]
Hence, the normalized (relative) error and the absolute error coincide.
Now, the approximation problem is said to be \emph{polynomially tractable} if $n(\varepsilon,d)$ is bounded polynomially in $\varepsilon^{-1}$ and $d$, i.e., there exist numbers $C,r,q > 0$ such that 
\[
 n(\varepsilon,d) \leq C \, \varepsilon^{-r} \, d^q \mbox{ for all $0<\varepsilon<1$ and all $d\in{\N}$.}
\]
The approximation problem is called 
\emph{quasi-polynomially tractable} if there exist two constants $C,t>0$ such that 
\[
    n(\varepsilon,d) \leq C\exp(t(1+\ln(1/\varepsilon))(1+\ln d))\,.
\]
It is called \emph{weakly tractable} if
\be\label{eq:wtrac}
    \lim\limits_{1/\varepsilon+d\to\infty} \frac{\log n(\varepsilon,d)}{1/\varepsilon + d} = 0\,,
\ee
i.e., the information complexity $n(\varepsilon,d)$ neither depends exponentially on $1/\varepsilon$ nor on $d$. We say
that the approximation problem is \emph{intractable}, if \eqref{eq:wtrac} does not hold. If for some fixed
$0<\varepsilon<1$ the information complexity $n(\varepsilon,d)$ 
is an exponential function in $d$ then we say that the problem suffers from
{\em the curse of dimensionality}. To make it precise, we face the curse if there exist positive numbers $c,
\varepsilon_0, \gamma$ such that
\begin{align*}
 n(\varepsilon,d) \geq c(1+\gamma)^d\,,\quad \mbox{for all } 0<\varepsilon\leq \varepsilon_0 \mbox{ and infinitely many
}d\in \N\,.
\end{align*}

\section{Counting via entropy}\label{sec:entropy}
The \emph{grid number} $G(A)$ of a set $A \subseteq \R^d$ is the number of points in $A$ that lie on the grid $\Z^d$.
Formally,
\[ 
 G(A) = \sharp ( A \cap \Z^d).
\]
The grid numbers $G(r B_{\|\cdot\|}^d)$, $r \in \R$, are central in the study of approximation numbers $a_n(\Id:
H^{\mathbf{w}}(\T^d) \to L_2(\T^d))$ if the weights $\mathbf{w}$ are induced by some (quasi-)norm $\|\cdot\|$, see
Section \ref{sec:approximation_numbers} below. In this section, we show that the combinatorics for grid numbers can be
reduced to covering arguments, at least if the studied set is solid. We call a set $A \subseteq \R^d$ \emph{solid} if
for all $x \in A$ every vector $y \in \R^d$ which component-wise fulfills $|y_i| \leq |x_i|$ is contained in $A$. For
instance, the unit ball $B_p^d$ is solid for any $0<p\leq \infty$.
\begin{lemma}\label{res:grid_numbers}
For a solid set $A \subseteq R^d$ we have
 \[ 
  N_1(A) \leq G(A) \leq N_{\rho}(A)
 \]
for any $\rho < 1/2$.
\end{lemma}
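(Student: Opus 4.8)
The plan is to prove the two inequalities separately. The right-hand inequality $G(A)\le N_\rho(A)$ holds for an arbitrary set $A\subseteq\R^d$; solidity is only needed for the left-hand inequality $N_1(A)\le G(A)$. If $A=\emptyset$ both sides vanish, and if $A$ is unbounded then $G(A)=\infty$ and there is nothing to prove, so we may assume $A$ is a nonempty bounded solid set (as is the case for $A=rB_p^d$ in the applications); note that solidity then forces $0\in A\cap\Z^d$, so $G(A)\ge 1$.

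For the upper bound, the key observation is that an $\ell_\infty^d$-ball of radius $\rho<1/2$ contains at most one point of $\Z^d$: if $m,m'\in\Z^d$ both lie in $x_0+\rho B_\infty^d$, then $\|m-m'\|_\infty\le 2\rho<1$, and since the coordinates of $m-m'$ are integers they must all be zero, i.e.\ $m=m'$. Now fix an optimal covering of $A$ by $N_\rho(A)$ balls $x_i+\rho B_\infty^d$. Every point of $A\cap\Z^d$ lies in at least one of these balls, and each ball contains at most one lattice point, hence $G(A)=\sharp(A\cap\Z^d)\le N_\rho(A)$.

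For the lower bound I would exhibit an explicit covering of $A$ by $G(A)$ unit $\ell_\infty^d$-balls centered at the lattice points lying in $A$. Given $x\in A$, round each coordinate \emph{towards the origin}: set $m_i:=\sign(x_i)\lfloor|x_i|\rfloor$ and $m:=(m_1,\dots,m_d)\in\Z^d$. Then $|m_i|=\lfloor|x_i|\rfloor\le|x_i|$ for every $i$, so by solidity the point $m$ belongs to $A$, i.e.\ $m\in A\cap\Z^d$; moreover $|x_i-m_i|=|x_i|-\lfloor|x_i|\rfloor<1$, so $\|x-m\|_\infty<1$ and therefore $x\in m+B_\infty^d$. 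Consequently $A\subseteq\bigcup_{m\in A\cap\Z^d}(m+B_\infty^d)$, which is a covering of $A$ by $G(A)$ balls of radius $1$, and hence $N_1(A)\le G(A)$.

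The only real subtlety is the choice of rounding in the last step: rounding to the \emph{nearest} integer need not keep the rounded point inside $A$ (for instance $0.7\mapsto 1$, which may leave a solid set), so one must round towards the origin in order to stay compatible with the definition of solidity. This is the single place where the hypothesis on $A$ enters, and it is the point that requires a moment's care; everything else is a direct count. \eproof
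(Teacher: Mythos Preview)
Your proof is correct and follows essentially the same approach as the paper: the paper also defines the ``round towards the origin'' map $\lfloor x\rfloor_j:=\sign(x_j)\lfloor|x_j|\rfloor$, uses solidity to conclude $\lfloor x\rfloor\in A$ and $\|x-\lfloor x\rfloor\|_\infty<1$ so that $A\cap\Z^d$ is a $1$-net, and obtains the upper bound from the fact that any $\ell_\infty$-ball of radius $\rho<1/2$ contains at most one lattice point. Your additional remarks on edge cases and on why nearest-integer rounding would fail are helpful commentary but not needed for the argument.
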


\begin{proof}
For $x \in A$, we define $\lfloor x \rfloor$ component-wise by $\lfloor x \rfloor_j := \sign x_j \; \lfloor |x_j|
\rfloor$. Clearly, $\|\lfloor x \rfloor-x\|_{\infty} < 1$ for any $x \in A$. Since the set $A$ is solid, $x \in A$ implies $\lfloor x \rfloor \in A$. Hence, the
intersection $A \cap  \Z^d$ forms a $1$-net of $A$ in $\ell_{\infty}^d$. Consequently, we have $N_1(A) \leq G(A)$. The
upper bound is a direct consequence of the fact that it needs at least $G(A)$ many balls of radius $\rho < 1/2$ to cover
$A \cap \Z^d$.
\end{proof}

\noindent A function $\|\cdot\|: \R^d \to [0,\infty)$ is called a $p$-norm for some $0<p\leq 1$ if $\|\cdot\|$ fulfills the norm axioms of absolute homogeneity and point separation and, furthermore, the $p$-triangle inequality
\[ 
 \|x + y\|^p \leq \|x\|^p + \|y\|^p 
\]
holds true for any $x,y \in \R^d$.
The typical example for a $p$-norm with $0<p<1$ is $\|\cdot\| = \|\cdot\|_p$. If $A \subset \R^d$ is the unit ball of a $p$-norm $\|\cdot\|$,
there is another relation
between covering and grid numbers. In this relation the quantity
\[ 
 \lambda_{\|\cdot\|}(d) := \big\|\sum_{i=1}^d e_i\big\|
\]
appears, where $e_1,\dots,e_d$ denote the canonical basis vectors in $\R^d$. Note that if $\|\cdot\| = \|\cdot\|_p$ for
$0<p\leq \infty$ we have $\lambda_{\|\cdot\|}(d) = d^{1/p}$.

\begin{lemma}\label{res:grid_numbers_p_balls}
Let $\|\cdot\|$ be a $p$-norm in $\R^d$ for some $0<p\leq 1$. For $r > \lambda_{\|\cdot\|}(d)/2$, put
  \begin{align*}\label{eq:rescaled_radii}
  \begin{split}
  l(r,p,d) &:=\big(r^{p} - \lambda_{\|\cdot\|}(d)^{p}/2^{p}\big)^{1/p},\\
  L(r,p,d) &:= \big(r^{p} + \lambda_{\|\cdot\|}(d)^{p}/2^{p}\big)^{1/p}.
  \end{split}
  \end{align*} 
With $B_{\|\cdot\|}^d$ denoting the unit ball, we have the relation
 \[ 
  N_{1/2}(l(r,p,d) B_{\|\cdot\|}^d) \leq G(r B_{\|\cdot\|}^d) \leq N_{1/2}(L(r,p,d) B_{\|\cdot\|}^d).
 \]
\end{lemma}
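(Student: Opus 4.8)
The plan is to relate the grid number $G(r B_{\|\cdot\|}^d)$ to covering numbers of \emph{rescaled} balls by tracking how the floor operation distorts the $\|\cdot\|$-norm. The key observation is that for $x \in \R^d$ the vector $\lfloor x \rfloor$ (defined component-wise as in Lemma \ref{res:grid_numbers}) satisfies $\|x - \lfloor x \rfloor\| \leq \lambda_{\|\cdot\|}(d)/2$ up to the natural scaling; more precisely, since $|x_j - \lfloor x\rfloor_j| < 1$ for every $j$, we have $x - \lfloor x \rfloor \in \sum_i [-1,1] e_i$, but to get the factor $1/2$ one instead rounds to the \emph{nearest} integer in each coordinate, call it $\langle x \rangle_j$, so that $|x_j - \langle x\rangle_j| \leq 1/2$ and hence $\|x - \langle x \rangle\| \leq \frac12 \|\sum_i e_i\| = \lambda_{\|\cdot\|}(d)/2$ by absolute homogeneity and monotonicity of $\|\cdot\|$ on the positive orthant (which follows from solidity of $B_{\|\cdot\|}^d$, itself a consequence of the $p$-triangle inequality). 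With this rounding in hand, I would prove the two inequalities separately.

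For the upper bound $G(r B_{\|\cdot\|}^d) \leq N_{1/2}(L(r,p,d) B_{\|\cdot\|}^d)$: take an optimal $\frac12$-net of $L \cdot B_{\|\cdot\|}^d$ (where $L = L(r,p,d)$), i.e.\ $N_{1/2}(L B_{\|\cdot\|}^d)$ balls of $\ell_\infty$-radius $1/2$ covering it. I claim each such ball contains at most one grid point: two distinct grid points differ by at least $1$ in some coordinate, so cannot both lie in a common $\ell_\infty$-ball of radius $1/2$. Thus it suffices to show every grid point of $r B_{\|\cdot\|}^d$ lies in $L B_{\|\cdot\|}^d$ — but a grid point $k$ with $\|k\| \leq r$ trivially satisfies $\|k\| \leq r \leq L$, so this direction is almost immediate and the rescaling is not even needed for the upper bound as I have set it up. (One should double-check whether the intended argument instead covers $r B^d_{\|\cdot\|}$ itself and uses an inflation to $L B^d_{\|\cdot\|}$ to guarantee the net points can be taken \emph{on the grid}; I would reconcile this with the precise statement, possibly the roles of $l$ and $L$ encode exactly that subtlety.)

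For the lower bound $N_{1/2}(l(r,p,d) B_{\|\cdot\|}^d) \leq G(r B_{\|\cdot\|}^d)$: I would show that the grid points lying in $r B_{\|\cdot\|}^d$ form a $\frac12$-net of the \emph{smaller} ball $l B_{\|\cdot\|}^d$, where $l = l(r,p,d) = (r^p - (\lambda_{\|\cdot\|}(d)/2)^p)^{1/p}$. Indeed, given $y$ with $\|y\| \leq l$, round it coordinate-wise to the nearest integer $\langle y \rangle$; then $\|y - \langle y \rangle\| \leq \lambda_{\|\cdot\|}(d)/2$, so $\|y - \langle y\rangle\|_\infty \leq 1/2$ as well (actually $\leq 1/2$ by construction, independent of $\|\cdot\|$), and by the $p$-triangle inequality $\|\langle y \rangle\|^p \leq \|y\|^p + \|y - \langle y\rangle\|^p \leq l^p + (\lambda_{\|\cdot\|}(d)/2)^p = r^p$, so $\langle y \rangle \in r B_{\|\cdot\|}^d \cap \Z^d$. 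Hence the grid points of $r B_{\|\cdot\|}^d$ cover $l B_{\|\cdot\|}^d$ with $\ell_\infty$-radius $1/2$, giving $N_{1/2}(l B_{\|\cdot\|}^d) \leq \sharp(r B_{\|\cdot\|}^d \cap \Z^d) = G(r B_{\|\cdot\|}^d)$.

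The main obstacle is the bookkeeping for why $\|x - \langle x \rangle\| \leq \lambda_{\|\cdot\|}(d)/2$: one must use that $|x_j - \langle x\rangle_j| \leq 1/2$ for each $j$ together with monotonicity of the $p$-norm on the nonnegative orthant (equivalently solidity of its unit ball, which I would derive quickly from the $p$-triangle inequality: if $0 \le y_j \le x_j$ then writing $x = y + (x-y)$ with $x - y \ge 0$ gives $\|y\|^p \le \|y\|^p + \|x-y\|^p$... more carefully one shows $\|y\| \le \|x\|$ by a standard argument) and absolute homogeneity to pull out the factor $1/2$ from $\|\sum_j (x_j - \langle x\rangle_j) e_j\|$, bounded by $\|\sum_j \tfrac12 e_j\| = \tfrac12\lambda_{\|\cdot\|}(d)$. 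The second point requiring care is ensuring the condition $r > \lambda_{\|\cdot\|}(d)/2$ is exactly what makes $l(r,p,d)$ well-defined (positive real), which it is. Everything else — the ``one grid point per small $\ell_\infty$-ball'' observation and the monotonicity of covering numbers in the radius — is routine.
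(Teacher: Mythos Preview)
Your lower bound argument is correct and matches the paper's: rounding $y$ to the nearest integer $\langle y\rangle$ and applying the $p$-triangle inequality shows that the grid points of $r B_{\|\cdot\|}^d$ form a $\tfrac12$-net (in $\ell_\infty^d$) of $l(r,p,d)\,B_{\|\cdot\|}^d$. The paper phrases this via the set inclusion $l(r,p,d)\,B_{\|\cdot\|}^d \subseteq \bigcup_{\|k\|\le r} Q_k$ with $Q_k = k + [-\tfrac12,\tfrac12]^d$, which is the same argument.

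Your upper bound argument, however, has a genuine gap. The claim that a closed $\ell_\infty$-ball of radius $\tfrac12$ contains at most one lattice point is false: the cube centered at $(\tfrac12,\ldots,\tfrac12)$ contains all $2^d$ points of $\{0,1\}^d$. This is precisely why your argument never uses the enlargement to $L(r,p,d)$, and you rightly sense something is off. The paper's route is a \emph{volume} argument. From the inclusion $\bigcup_{\|k\|\le r} Q_k \subseteq L(r,p,d)\,B_{\|\cdot\|}^d$ (again via the $p$-triangle inequality) and the fact that the cubes $Q_k$ are essentially disjoint with unit volume, one gets $G(r B_{\|\cdot\|}^d)\le \vol\big(L\,B_{\|\cdot\|}^d\big)$. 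On the other hand, any covering of $L\,B_{\|\cdot\|}^d$ by $N$ cubes of side $1$ has total volume at least $\vol\big(L\,B_{\|\cdot\|}^d\big)$, so $N_{1/2}(L\,B_{\|\cdot\|}^d)\ge \vol\big(L\,B_{\|\cdot\|}^d\big)\ge G(r B_{\|\cdot\|}^d)$. Thus the inflation to $L$ is exactly what makes the volume comparison go through, not any device for placing net points on the grid.

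A minor side remark: solidity of $B_{\|\cdot\|}^d$ (equivalently, coordinate-wise monotonicity of $\|\cdot\|$ on the nonnegative orthant) does \emph{not} follow from the $p$-triangle inequality alone; for $p=1$ the norm $\|(x,y)\|=|x-y|+\varepsilon(|x|+|y|)$ has $\|(1,1)\|=2\varepsilon<\|(1,0)\|$. The estimate $\|x-\langle x\rangle\|\le \lambda_{\|\cdot\|}(d)/2$ genuinely needs this monotonicity as an additional hypothesis. In the paper's applications the norm is always a classical $\|\cdot\|_p$, where this is automatic.
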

\begin{proof}
Let $Q_k = k + [-1/2,1/2]^d$. The $p$-triangle inequality for $\|\cdot \|$ yields
\begin{align}\label{eq:two_sided_inclusion}
 l(r,p,d) B_{\|\cdot\|}^d  \subseteq \bigcup_{\substack{k \in \Z^d,\\\|k\| \leq r}} Q_k \subseteq L(r,p,d)
B_{\|\cdot\|}^d.
\end{align}
The left-hand side inclusion shows that the set $r B_{\|\cdot\|}^d \cap  \Z^d$ is a $1/2$-net of $l(r,p,d)
B_{\|\cdot\|}^d$ in $\ell_{\infty}^d$. This shows the left-hand side inequality of the statement. The second inequality
follows from the right-hand side inclusion by a simple volume argument.
\end{proof}

Lemma \ref{res:grid_numbers_p_balls} yields the following bounds for entropy numbers. Note that the upper bound is a
refinement for large $n$ of the usual upper bound found in the literature, compare also with Propositions
\ref{res:entropy_infty}, \ref{res:entropy_p}.

\begin{lemma}\label{res:entropy_numbers_large_n}
Let $\|\cdot\|$ be a $p$-norm for some $0<p\leq 1$. For $n > (d^{1/p}/2)^d \vol(B_{\|\cdot\|}^d)$, we have
\[ 
 \frac{1}{2} (n/\vol(B_{\|\cdot\|}^d))^{1/d}  \leq \varepsilon_n(\id: \ell_{\|\cdot\|}^d \to \ell_\infty^d) \leq \frac{1}{2}
\big( (n/\vol(B_{\|\cdot\|}^d))^{p/d} - 2^{1-p} d^{1 /p})\big)^{-1/p}.
\]
\end{lemma}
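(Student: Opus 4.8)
The plan is to derive Lemma \ref{res:entropy_numbers_large_n} directly from Lemma \ref{res:grid_numbers_p_balls} by inverting the grid-number estimates. The starting point is the observation that, for a $p$-norm $\|\cdot\|$, the grid number $G(rB_{\|\cdot\|}^d)$ is sandwiched between two covering numbers at scale $1/2$, namely
\[
 N_{1/2}(l(r,p,d)B_{\|\cdot\|}^d) \leq G(rB_{\|\cdot\|}^d) \leq N_{1/2}(L(r,p,d)B_{\|\cdot\|}^d),
\]
valid whenever $r > \lambda_{\|\cdot\|}(d)/2 = d^{1/p}/2$. Rescaling by the factor $2$, and using homogeneity of $\|\cdot\|$ (so that $N_{1/2}(\rho B_{\|\cdot\|}^d) = N_1((\rho/2)B_{\|\cdot\|}^d)$), together with the volume bound $G(rB_{\|\cdot\|}^d) \geq \vol(rB_{\|\cdot\|}^d) = r^d\vol(B_{\|\cdot\|}^d)$ and the matching upper volume bound $G(rB_{\|\cdot\|}^d)\leq \vol((r+d^{1/p}/2)B_{\|\cdot\|}^d)$ coming from \eqref{eq:two_sided_inclusion}, one gets two-sided control of the covering numbers of dilates of $B_{\|\cdot\|}^d$ at integer-type scales. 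Inverting the definition $\varepsilon_n = \inf\{\varepsilon>0 : N_\varepsilon(B_{\|\cdot\|}^d)\leq n\}$ then produces the claimed bounds on $\varepsilon_n$.

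More concretely, for the lower bound I would argue as follows: if $\varepsilon_n(\id:\ell_{\|\cdot\|}^d\to\ell_\infty^d) < \tfrac12(n/\vol(B_{\|\cdot\|}^d))^{1/d}$, then $B_{\|\cdot\|}^d$ can be covered by $n$ cubes of half-side $\varepsilon < \tfrac12(n/\vol(B_{\|\cdot\|}^d))^{1/d}$, i.e.\ by $n$ translates of $\varepsilon B_\infty^d$, whose total volume is $n(2\varepsilon)^d < n\cdot n^{-1}\vol(B_{\|\cdot\|}^d) = \vol(B_{\|\cdot\|}^d)$, contradicting that the union contains $B_{\|\cdot\|}^d$. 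This is just the standard volumetric lower bound and needs no smoothness or $p$-norm structure; it holds for every $n\in\N$. For the upper bound, I would set $\varepsilon := \tfrac12\big((n/\vol(B_{\|\cdot\|}^d))^{p/d} - 2^{1-p}d^{1/p}\big)^{-1/p}$ and let $r := 1/(2\varepsilon)$, so that $r^p = (n/\vol(B_{\|\cdot\|}^d))^{p/d} - 2^{1-p}d^{1/p}$. Then $L(r,p,d)^p = r^p + d/2^p$, and one computes $(2\varepsilon)^{-d} = (r)^d = \big((n/\vol)^{p/d} - 2^{1-p}d^{1/p}\big)^{d/p}$; the goal is to show $N_\varepsilon(B_{\|\cdot\|}^d) = N_{1/2}(rB_{\|\cdot\|}^d)\leq G(rB_{\|\cdot\|}^d)\leq N_{1/2}(L(r,p,d)B_{\|\cdot\|}^d) \leq \vol(L(r,p,d)B_{\|\cdot\|}^d) = L(r,p,d)^d\vol(B_{\|\cdot\|}^d) \leq n$, which after taking $p/d$-th powers reduces to the elementary inequality $\big(r^p + d/2^p\big) \leq (n/\vol(B_{\|\cdot\|}^d))^{p/d}$, i.e.\ $d/2^p \leq 2^{1-p}d^{1/p}$ shifted appropriately — wait, one must check $r^p + d/2^p = (n/\vol)^{p/d} - 2^{1-p}d^{1/p} + d/2^p$, so the needed inequality is $d/2^p \leq 2^{1-p}d^{1/p}$, equivalently $d^{1-1/p}\leq 2$, which is \emph{false} for $d\geq 3$; hence the constant $2^{1-p}d^{1/p}$ must actually be $2^{1-p}d$ in disguise, and I would recheck the arithmetic against \eqref{eq:two_sided_inclusion}, where the right inclusion uses $\|k\|\leq r$ and diam of $Q_k$ in $\|\cdot\|$ is $\lambda_{\|\cdot\|}(d)=d^{1/p}$ — so $L(r,p,d)^p = r^p + (d^{1/p}/2)^p = r^p + d/2^p$, confirming the $d/2^p$ term. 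The resolution is that $G(rB_{\|\cdot\|}^d)$ should be bounded by $N_{1/2}$ of the \emph{slightly} enlarged ball only up to the point-count, and one instead bounds $N_{1/2}(rB^d_{\|\cdot\|})$ using a covering by the lattice shifted cubes: the natural $1/2$-net of $rB^d_{\|\cdot\|}$ is $(r+d^{1/p}/2)B^d_{\|\cdot\|}\cap\Z^d$ minus nothing, giving $N_{1/2}(rB^d_{\|\cdot\|})\leq G((r+d^{1/p}/2)B^d_{\|\cdot\|}) \leq \vol((r+2^{-1}d^{1/p}+2^{-1}d^{1/p})B^d_{\|\cdot\|})$ — so the enlargement is by $d^{1/p}$, not $d^{1/p}/2$, and the final term becomes $2^{1-p}d^{1/p}\cdot$ (something), matching the stated formula after the dust settles.

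Thus the clean route is: (i) show $N_{1/2}(rB_{\|\cdot\|}^d) \leq \vol\big((r + d^{1/p})B_{\|\cdot\|}^d\big) = (r+d^{1/p})^d \vol(B_{\|\cdot\|}^d)$ by using that the integer lattice points inside $(r+d^{1/p}/2)B^d_{\|\cdot\|}$ form a $1/2$-net of $rB^d_{\|\cdot\|}$ (every point of $rB^d_{\|\cdot\|}$ is within $\|\cdot\|_\infty$-distance $1/2$ of such a lattice point, and that lattice point has $\|\cdot\|$-norm at most $r + d^{1/p}/2$ by the $p$-triangle inequality applied to the rounding vector of $\|\cdot\|$-norm $\leq d^{1/p}/2$), combined with the volume upper bound for that grid number via the right inclusion in \eqref{eq:two_sided_inclusion}; (ii) set $\varepsilon = r/(2r) $ — more precisely pick $r$ so that $(r+d^{1/p})^d\vol(B_{\|\cdot\|}^d) = n$, whence $N_{1/2}(rB_{\|\cdot\|}^d)\leq n$ and therefore $\varepsilon_n(\id:\ell^d_{\|\cdot\|}\to\ell^d_\infty)\leq 1/(2r) = \tfrac12\big((n/\vol(B_{\|\cdot\|}^d))^{1/d} - d^{1/p}\big)^{-1}$; (iii) finally convert this to the stated $p$-power form using $a - b \geq a^{1-p}\cdot(\cdots)$-type convexity inequalities, i.e.\ $(r+d^{1/p})^p \leq 2^{p-1}(r^p + d)$ hence $r \geq (2^{1-p}(r+d^{1/p})^p - d)^{1/p}$, turning $(r+d^{1/p})^d = n/\vol$ into the bound $\varepsilon_n \leq \tfrac12(2^{1-p}(n/\vol(B^d_{\|\cdot\|}))^{p/d} - d)^{-1/p}$, which after absorbing the factor $2^{1-p}$ into the exponent is the claimed $\tfrac12\big((n/\vol(B_{\|\cdot\|}^d))^{p/d} - 2^{1-p}d^{1/p}\big)^{-1/p}$ up to the precise form of the lower-order term (the paper's statement has $2^{1-p}d^{1/p}$, which I would match by being careful that the rounding vector has $\|\cdot\|^p \leq d\cdot(1/2)^p = d/2^p$, not $(d^{1/p}/2)^p$; since $(d^{1/p}/2)^p = d/2^p$ these coincide, so the bookkeeping is consistent). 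The range restriction $n > (d^{1/p}/2)^d\vol(B_{\|\cdot\|}^d)$ is exactly what guarantees $r > d^{1/p}/2$ so that Lemma \ref{res:grid_numbers_p_balls} applies and the expression under the root is positive.

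The main obstacle I anticipate is purely the bookkeeping: matching the precise lower-order constant $2^{1-p}d^{1/p}$ in the upper bound, which requires threading the $p$-triangle inequality through the rounding step with the right enlargement factor and then applying the convexity inequality $(a+b)^p \leq 2^{p-1}(a^p+b^p)$ (valid for $0<p\leq 1$ in the reversed-Jensen direction) in exactly the right place so that the $2^{1-p}$ prefactor lands on the $d^{1/p}$ term and not elsewhere. The structural content — volume lower bound for the lower estimate, grid-point net plus volume upper bound for the upper estimate, both powered by Lemma \ref{res:grid_numbers_p_balls} — is routine; getting the constants to agree with the displayed formula is the only delicate part, and I would double-check it against the limiting case $p=1$ (where $2^{1-p}=1$ and the bound reads $\varepsilon_n\leq\tfrac12((n/\vol)^{1/d}-d)^{-1}$) and against Proposition \ref{res:entropy_p} for consistency of the leading term $\tfrac12(n/\vol(B^d_p))^{1/d}\asymp d^{-1/p}n^{-1/d}$ (using $\vol(B^d_p)^{1/d}\asymp d^{-1/p}$).
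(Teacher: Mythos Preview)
Your lower bound is the standard volume argument and matches the paper (modulo an arithmetic slip: the contradiction you want is $n(2\varepsilon)^d<\vol(B_{\|\cdot\|}^d)$, which follows from $\varepsilon<\tfrac12(\vol(B_{\|\cdot\|}^d)/n)^{1/d}$, not from $\varepsilon<\tfrac12(n/\vol)^{1/d}$; the displayed lower bound in the lemma evidently has the fraction inverted).

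Your upper-bound plan, however, has two genuine errors, both rooted in treating the $p$-norm as if it satisfied the ordinary triangle inequality. First, in step (i) you claim that the rounded lattice point $k$ satisfies $\|k\|\leq r+d^{1/p}/2$ ``by the $p$-triangle inequality''. But the $p$-triangle inequality only gives $\|k\|^p\leq\|x\|^p+\|x-k\|^p\leq r^p+d/2^p$, hence $\|k\|\leq(r^p+d/2^p)^{1/p}$, which for $p<1$ is strictly \emph{larger} than $r+d^{1/p}/2$; your claimed enlargement is too small. Second, in step (iii) the inequality $(a+b)^p\leq 2^{p-1}(a^p+b^p)$ is \emph{false} for $0<p<1$ (take $a=1$, $b=0$: it would give $1\leq 2^{p-1}<1$). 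What is true in this range is the concavity bound $(a+b)^p\leq a^p+b^p$, which goes the wrong way for your purposes. So your route works only at $p=1$ and breaks down for $p<1$.

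The paper's argument avoids all of this by staying in the $p$-power scale from the outset, using \emph{both} inclusions of \eqref{eq:two_sided_inclusion} directly rather than chaining two enlargements. One chooses $r$ so that $L(r,p,d)^p=(n/\vol(B_{\|\cdot\|}^d))^{p/d}$, i.e.\ $r^p=(n/\vol)^{p/d}-\lambda_{\|\cdot\|}(d)^p/2^p$. The right inclusion then gives $G(rB_{\|\cdot\|}^d)\leq L(r,p,d)^d\vol(B_{\|\cdot\|}^d)=n$, and the left inclusion says those grid points are a $1/2$-net of $l(r,p,d)B_{\|\cdot\|}^d$, whence $\varepsilon_n\leq 1/(2\,l(r,p,d))$. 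Since $l(r,p,d)^p=r^p-\lambda_{\|\cdot\|}(d)^p/2^p=(n/\vol)^{p/d}-2^{1-p}\lambda_{\|\cdot\|}(d)^p$, the bound drops out with no convexity trick needed. (Note that this computation gives $2^{1-p}\lambda_{\|\cdot\|}(d)^p$, which equals $2^{1-p}d$ when $\|\cdot\|=\|\cdot\|_p$; the exponent on $d$ in the displayed statement appears to be a typo, which may be part of why your bookkeeping never converged.)
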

\begin{proof}
The lower bound is the standard lower bound, which follows from simple volume arguments and in fact holds true for all
$n \in \N$. To see the upper bound, choose $r = \big( (n/\vol(B_{\|\cdot\|}^d))^{p/d} - d^{p /p}/2^{p})\big)^{1/p}$.
Then it follows from the right-hand side inclusion of \eqref{eq:two_sided_inclusion} that $G(r B_p^d) \leq n$ and
further from the left-hand side inclusion of \eqref{eq:two_sided_inclusion} that $\varepsilon_n(\id: \ell_p^d \to
\ell_\infty^d) \leq 1/(2l(r,p,d))$, where $l(r,p,d)$ is defined in Lemma \ref{res:grid_numbers_p_balls}. It remains to
plug in the formula for $r$. 
\end{proof}

Let us briefly come back to the discussion on the equivalence constants in Proposition \ref{res:entropy_infty} and
Proposition \ref{res:entropy_p}. An interesting question is whether the equivalence constants in the lower and upper
bounds necessarily have to be different or whether this is just an artifact of the used proof techniques. Lemma
\ref{res:entropy_numbers_large_n} allows a partial answer. In the limit $n \to \infty$ we have
\begin{align}\label{eq:entropy_limit}
  \lim_{n \to \infty} n^{1/d} \varepsilon_n(id: \ell_{\|\cdot\|}^d \to \ell_{\infty}^d) = 1/2 \cdot
\vol(B_{\|\cdot\|}^d)^{1/d}.
\end{align}
If $\|\cdot\| = \|\cdot\|_p$ for $0<p\leq \infty$, then $\vol(B_{\|\cdot\|}^d)^{1/d} = \vol(B_p^d)^{1/d} \asymp
d^{-1/p}$, see \cite{W05}. 



\section{Characterization of approximation numbers}
\label{sec:approximation_numbers}

In this section, we prove a number of characterization results for approximation numbers $a_n(\Id: H^{\mathbf{w}}(\T^d)
\to L_2(\T^d))$ when the weight sequence $\mathbf{w}$ is derived from some (quasi-)norm in $\R^d$. To begin with, let us
recapitulate some well-known facts about approximation numbers of weighted spaces. Let $\mathbf{w} = (w(k))_{k \in \Z^d}$ be an arbitrary weight sequence such that $1/\mathbf{w} := (1/w(k))_{k \in \Z^d} \in \ell_\infty(\Z^d)$. It is well-known that the approximation numbers are given by the \emph{non-increasing rearrangement}
$(\sigma_n)_{n \in \N}$ of the inverse weight sequence $1/\mathbf{w}$, that is,
\begin{align}
\label{eq:appr_nr_rearr}
 a_n(\Id: H^{\mathbf{w}}(\T^d) \to L_2(\T^d)) = \sigma_n
\end{align}
for all $n \in \N$. We briefly sketch the proof of this fact, details and further references can be found in \cite[Section
2.2]{KSU13}. Consider the isometries
\begin{align}\label{eq:seq_op}
 A^{\textbf{w}}: H^{\mathbf{w}}(\T^d) \to \ell_2(\Z^d), \; f \mapsto \big(w(k) c_k(f) \big)_{k \in \Z^d}
\end{align}
and
\begin{align}\label{eq:fourier_op}
 F: \ell_2(\Z^d) \to L_2(\T^d), \; (\xi_k)_{k \in \Z^d} \mapsto (2\pi)^{-d/2} \sum_{k \in \Z^d} \xi_k e^{i k x},
\end{align}
as well as the diagonal operator
\begin{align}\label{eq:diag_op}
 D^{\textbf{w}}: \ell_2(\Z^d) \to \ell_2(\Z^d), \; (\xi_k)_{k \in \Z^d} \mapsto ( \xi_k/w(k))_{k \in \Z^d}.
\end{align}
Obviously, we have $\Id = F \circ D^{\textbf{w}} \circ A^{\textbf{w}}$, which is illustrated by the commutative diagram
below.
\begin{figure}[h]
\centering
\begin{tikzpicture}
  \tikzset{node distance=3cm, auto}
  \node (H) {$H^{\mathbf{w}}(\T^d)$};
  \node (L) [right of =H] {$L_2(\T^d)$};
  \node (ell) [below of=H] {$\ell_2(\Z^d)$};
  \node (ell2) [right of=ell] {$\ell_2(\Z^d)$};
  \draw[->] (H) to node {$\Id$} (L);
  \draw[->] (H) to node {$A^{\textbf{w}}$} (ell);
  \draw[->] (ell) to node {$D^{\textbf{w}}$} (ell2);
  \draw[->] (ell2) to node {$F$} (L);
\end{tikzpicture}
\caption{Commutative diagram for the embedding $\Id: H^{\mathbf{w}}(\T^d) \to L_2(\T^d)$.}
\label{fig:commutative_diagram}
\end{figure}

\noindent It is known that the approximation numbers of the diagonal operator are given by $(\sigma_n)_{n \in \N}$, and
from $\|A^{\textbf{w}}\| = \|F\| = 1$ we conclude
\begin{align*}
 a_n(\Id: H^{\mathbf{\mathbf{w}}}(\T^d) \to L_2(\T^d)) = a_n(D^{\textbf{w}}: \ell_2(\Z^d) \to \ell_2(\Z^d)) = \sigma_n.
\end{align*}

We come to our first characterization result. For weight sequences $\mathbf{w}$ given by a (quasi-)norm $\|\cdot\|$ on
$\R^d$, we show that the non-increasing rearrangement $(\sigma_n)_{n\in \N}$ is in fact equivalent up to constants to
the entropy numbers $\varepsilon_n(\id: \ell_{\|\cdot\|}^d \to \ell_\infty^d)$.

\begin{theorem}\label{res:approx_entropy}
Let $\|\cdot\|$ be some (quasi-)norm on $\R^d$ such that $\min_{i=1,\dots,d} \|e_i\| = 1$, where $e_1,\dots,e_d$ denotes
the canonical basis in $\R^d$. Consider the weight sequence $\mathbf{w} =(w(k))_{k \in \Z^d}$ given by $w(k) :=
\max\{1,\|k\|\}$. For every $n \in \N$, we have
\[ 
 1/2 \, \varepsilon_{n}(\id: \ell_{\|\cdot\|}^d \to \ell_{\infty}^d)
 \leq
 a_n(\Id: H^{\mathbf{w}}(\T^d)  \to L_2(\T^d))
 \leq
 4 \, \varepsilon_{n}(\id: \ell_{\|\cdot\|}^d \to \ell_{\infty}^d).
\]
\end{theorem}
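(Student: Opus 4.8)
The starting point is the identity \eqref{eq:appr_nr_rearr}: the approximation number $a_n(\Id: H^{\mathbf{w}}(\T^d)\to L_2(\T^d))$ equals $\sigma_n$, the $n$-th term of the non-increasing rearrangement of the inverse weights $1/w(k) = 1/\max\{1,\|k\|\}$, $k\in\Z^d$. The key observation is that $\sigma_n\le\varepsilon$ precisely when all but at most $n-1$ of the weights $w(k)$ satisfy $w(k) > 1/\varepsilon$, i.e.\ when the number of frequencies $k$ with $w(k)\le 1/\varepsilon$ is at most $n-1$. Equivalently, writing $A_r := \{x\in\R^d: \|x\|\le r\}=r B_{\|\cdot\|}^d$ and using that $w(k)\le r$ iff $k\in A_r$ (for $r\ge 1$; the normalization $\min_i\|e_i\|=1$ ensures $w(k)=\max\{1,\|k\|\}$ is the right object and $A_r$ contains $0$), the decisive quantity is the grid number $G(A_r) = \sharp(A_r\cap\Z^d)$. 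So I want to show: $\sigma_n$ is sandwiched between $1/r$-type quantities where $r$ is governed by the condition $G(A_r)\lessgtr n$.

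First I would make the dictionary precise. Fix $\varepsilon>0$ and set $r=1/\varepsilon$. Then
\[
 \sharp\{k\in\Z^d: 1/w(k)\ge \varepsilon\} = \sharp\{k\in\Z^d: \|k\|\le 1/\varepsilon\} = G\big((1/\varepsilon)B_{\|\cdot\|}^d\big),
\]
using monotonicity of the norm and $w(k)=\max\{1,\|k\|\}$. Hence $\sigma_n\le\varepsilon \iff G((1/\varepsilon)B_{\|\cdot\|}^d)\le n-1$, and a slightly more careful version ($<$ versus $\le$) gives the two one-sided statements
\[
 G\big((1/\sigma_n)B_{\|\cdot\|}^d\big) \ge n \qquad\text{and}\qquad \sigma_n \le \varepsilon \text{ whenever } G\big((1/\varepsilon)B_{\|\cdot\|}^d\big) < n.
\]
Next I invoke Lemma~\ref{res:grid_numbers}: since $B_{\|\cdot\|}^d$ is solid (as $\|\cdot\|$ is a monotone quasi-norm on $\R^d$, any coordinatewise-smaller vector has smaller norm), and hence so is every dilate $rB_{\|\cdot\|}^d$, we have
\[
 N_1\big(rB_{\|\cdot\|}^d\big) \le G\big(rB_{\|\cdot\|}^d\big) \le N_\rho\big(rB_{\|\cdot\|}^d\big)\quad\text{for all } \rho<1/2.
\]
By homogeneity of the covering number under scaling, $N_\varepsilon(rB_{\|\cdot\|}^d) = N_{\varepsilon/r}(B_{\|\cdot\|}^d)$, so $N_1(rB_{\|\cdot\|}^d)=N_{1/r}(B_{\|\cdot\|}^d)$ and $N_\rho(rB_{\|\cdot\|}^d)=N_{\rho/r}(B_{\|\cdot\|}^d)$. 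Combining with the definition $\varepsilon_n(B_{\|\cdot\|}^d,\ell_\infty^d) = \inf\{\varepsilon>0: N_\varepsilon(B_{\|\cdot\|}^d)\le n\}$, one gets chains of inequalities translating "$G(rB_{\|\cdot\|}^d)\ge n$" into "$\varepsilon_{n-1}(\id:\ell_{\|\cdot\|}^d\to\ell_\infty^d) \gtrsim 1/r$" and "$G(rB_{\|\cdot\|}^d)<n$" into "$\varepsilon_n \lesssim $ something like $1/(2r)$", with the factor-$2$ (and the passage from $\rho<1/2$ to $\rho=1/2$, and from $n-1$ to $n$) producing exactly the constants $1/2$ and $4$ in the claimed two-sided bound. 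Concretely: from $G((1/\sigma_n)B_{\|\cdot\|}^d)\ge n$ and $G\le N_\rho$ we get $N_{\rho\sigma_n}(B_{\|\cdot\|}^d)\ge n$ for all $\rho<1/2$, hence $\varepsilon_n \ge \rho\sigma_n$, i.e.\ $\sigma_n \le \varepsilon_n/\rho \to 2\varepsilon_n$, which after tidying gives the lower bound $a_n\ge \tfrac12\varepsilon_n$ for the reverse direction — I would organize the four-way translation carefully so that the endpoints $\rho\uparrow 1/2$ and $n\leftrightarrow n-1$ are absorbed into the stated constants rather than left dangling.

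The main obstacle I anticipate is bookkeeping of the off-by-one in $n$ and the strict-versus-non-strict inequalities — $a_n$ involves $\rank A < n$, $\varepsilon_n$ involves $N_\varepsilon\le n$, the rearrangement $\sigma_n$ counts how many weights lie below a threshold, and Lemma~\ref{res:grid_numbers} is stated with $\rho<1/2$ rather than $\rho=1/2$. Getting a clean two-sided estimate with honest constants $1/2$ and $4$ requires threading these together without slack; for instance one passes from the open condition $\rho<1/2$ to an effective factor $2$ by a limiting argument, and one must check that the case $r<1$ (equivalently $\varepsilon>1$, $n$ small) is handled by the convention $w(k)=\max\{1,\|k\|\}$ so that $G((1/\varepsilon)B_{\|\cdot\|}^d)$ still equals $\sharp\{k:1/w(k)\ge\varepsilon\}$ — here the "$1$" in $\max\{1,\|k\|\}$ and the normalization $\min_i\|e_i\|=1$ are exactly what make the dictionary consistent at the coarse scale. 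Everything else (solidity, scaling homogeneity of $N_\varepsilon$, the diagonal-operator reduction) is routine given the lemmas already proved.
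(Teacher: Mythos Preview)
Your plan matches the paper's: reduce $a_n$ to $\sigma_n$ via \eqref{eq:appr_nr_rearr}, link $\sigma_n$ to the grid numbers $G(rB_{\|\cdot\|}^d)$, and translate to covering/entropy numbers through Lemma~\ref{res:grid_numbers}. But the one concrete chain you display contains a genuine gap. From $N_{\rho\sigma_n}(B_{\|\cdot\|}^d)\ge n$ you \emph{cannot} conclude $\varepsilon_n\ge\rho\sigma_n$: if $N_{\rho\sigma_n}=n$ exactly, then $\rho\sigma_n$ already lies in $\{\varepsilon:N_\varepsilon\le n\}$ and hence $\varepsilon_n\le\rho\sigma_n$. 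What your chain actually yields is $\varepsilon_{n-1}\ge\rho\sigma_n$, and your hope that the shift $n-1\leftrightarrow n$ can be ``absorbed into the stated constants'' is unfounded --- entropy numbers admit no universal inequality of the form $\varepsilon_{n-1}\le C\,\varepsilon_n$. (Your exposition also mixes up the two directions: the displayed chain is an argument for the \emph{upper} bound $\sigma_n\le C\varepsilon_n$, not the lower one.)

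The paper avoids this by running the bookkeeping from the entropy side and, crucially, keeping the scaling radius an \emph{integer}. For the upper bound one starts with $\varepsilon>\varepsilon_n$, so $N_\varepsilon(B_{\|\cdot\|}^d)\le n$, sets $m=\lfloor 1/((2+\delta)\varepsilon)\rfloor$ so that $m\varepsilon<1/2$, and obtains
\[
G(mB_{\|\cdot\|}^d)\le N_{m\varepsilon}(mB_{\|\cdot\|}^d)=N_\varepsilon(B_{\|\cdot\|}^d)\le n
\]
from Lemma~\ref{res:grid_numbers}. The point of integrality is the exact identity $\sigma_{G(mB_{\|\cdot\|}^d)}=1/m$: since $m\in\N$, the lattice point $me_{i^*}$ (with $i^*=\arg\min_i\|e_i\|$) satisfies $w(me_{i^*})=m$, so the $G(mB_{\|\cdot\|}^d)$-th largest value of $1/w$ is precisely $1/m$. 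This is exactly where the normalisation $\min_i\|e_i\|=1$ enters, and it is what converts $G(mB_{\|\cdot\|}^d)\le n$ cleanly into $\sigma_n\le\sigma_{G(mB_{\|\cdot\|}^d)}=1/m$ without any $n\leftrightarrow n-1$ contamination. The floor in $m$ costs an extra factor~$2$, giving $\sigma_n\le 4\varepsilon_n$ after $\delta\to 0$ and $\varepsilon\downarrow\varepsilon_n$. The lower bound runs symmetrically with $m=\lceil 1/\varepsilon\rceil$ for $\varepsilon<\varepsilon_n$ and the other half $N_1\le G$ of Lemma~\ref{res:grid_numbers}. Working directly with the non-integer radius $1/\sigma_n$, as you propose, forfeits this identity and that is where your argument breaks.
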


\begin{proof}
Let $(\sigma_n)_{n \in \N}$ denote the non-increasing rearrangement of $(1/w(k))_{k \in \Z^d}$. By
\eqref{eq:appr_nr_rearr} we know that $a_n(\Id: H^{\mathbf{w}}(\T^d)  \to L_2(\T^d)) = \sigma_n$. Since $G(m
B_{\|\cdot\|}^d) = \sharp \{ k \in \Z^d: \|k\| \leq m \} = \sharp \{k \in \Z^d: w(k) \leq m \}$ and $w(me_{i^*}) = m$,
where $i^{*} = \arg\min_{i=1,\dots,d}\|e_i\|$, we  have $\sigma_{G(mB_{\|\cdot\|}^d)} = 1/m$.

Let us first prove the upper bound. For brevity, we write $\varepsilon_n = \varepsilon_n(id: \ell_{\|\cdot\|}^d \to
\ell_{\infty}^d)$ in the following.  For given $n  \in \N$, let $\varepsilon > \varepsilon_n$ and put $m := \lfloor
1/((2+\delta)\varepsilon) \rfloor$ for some $\delta > 0$. By virtue of Lemma \ref{res:grid_numbers}, we obtain
\[ 
n \geq N_{\varepsilon}(B_{\|\cdot\|}^d) = N_{m\varepsilon}(m B_{\|\cdot\|}^d) \geq N_{1/(2+\delta)}(m B_{\|\cdot\|}^d)
\geq G(m B_{\|\cdot\|}^d).
\]
The monotonicity of approximation numbers yields
\begin{align}\label{eq:sigma_n_estimate}
 \sigma_n \leq \sigma_{G(m B_{\|\cdot\|}^d)} = 1/m \leq 2(2+\delta)\varepsilon.
\end{align}
Since $\delta$ can be chosen arbitrarily close to $0$ and $\varepsilon$ arbitrarily close to $\varepsilon_n$, we reach
at $\sigma_n \leq 4 \varepsilon_n$.

To prove the lower bound, assume $\varepsilon < \varepsilon_n$ for some $n \in \N$ and put $m = \lceil 1/\varepsilon
\rceil$. We have
$n \leq N_{\varepsilon}(B_{\|\cdot\|}^d) \leq N_{1}(m B_{\|\cdot\|}^d) \leq G(m B_{\|\cdot\|}^d),$
where the last estimate is due to Lemma \ref{res:grid_numbers}. Thus,
$\sigma_n \geq \sigma_{G(m B_{\|\cdot\|}^d)} = 1/m \geq 1/2 \varepsilon.$
Again, as $\varepsilon$ may be chosen arbitrarily close to $\varepsilon_n$, we have
$ \sigma_n \geq 1/2 \varepsilon_n.$
\end{proof}

\begin{remark} {\em (i)} The constants $1/2$ and $4$ are an artifact of the proof technique. We do not claim that these are optimal.\\
{\em (ii)} The assumption $\min_{i=1,\dots,d} \|e_i\| = 1$ in Theorem \ref{res:approx_entropy} has only been made to keep the
formulation of the statement and the proof as simple as possible. In particular, the initial error is always $1$. If $\min_{i=1,\dots,d} \|e_i\| = c \neq 1$, then the statement still holds true, provided we define
$w(k) := \max\{c,\|k\|\}$. Otherwise, i.e. when keeping the definition $w(k) := \max\{1,\|k\|\}$, the statement holds
true for sufficiently large $n > n_0(c,d)$, where $n_0(c,d)$ can depend on $c$ and the dimension $d$.
\end{remark}

\noindent The statement of Theorem \ref{res:approx_entropy} can be easily generalized.

\begin{theorem}\label{res:approx_entropy_mono}
Let $\|\cdot\|$ be some (quasi-)norm on $\R^d$ as in Theorem \ref{res:approx_entropy} and let $\varphi: \R \to \R$ be a
monotonically increasing function satisfying $\varphi(0) = 1$. Consider the weight sequence $\mathbf{w} =
\varphi(\|\cdot\|)$ given by $w(k) := \varphi(\|k\|)$. Writing
$ 
 \varepsilon_n = \varepsilon_n(\id: \ell_{\|\cdot\|}^d \to \ell_\infty^d),
$ we have, for all $n \in \N \setminus \{1\}$, the estimate
\[ 
 \frac{1}{\varphi(2/\varepsilon_n)}
 \leq
 a_n(\Id: H^{\varphi(\|\cdot\|)}(\T^d) \to L_2(\T^d))
 \leq
 \frac{1}{\varphi(1/(4\varepsilon_n))}.
\]
\end{theorem}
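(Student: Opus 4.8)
The plan is to reduce Theorem~\ref{res:approx_entropy_mono} to Theorem~\ref{res:approx_entropy} by exploiting that the weight $w(k) = \varphi(\|k\|)$ is just a monotone reparametrization of the weight $w_0(k) := \max\{1,\|k\|\}$ from the previous theorem. Concretely, I would again invoke \eqref{eq:appr_nr_rearr}: if $(\sigma_n)_{n \in \N}$ denotes the non-increasing rearrangement of $(1/\max\{1,\|k\|\})_{k \in \Z^d}$, then $a_n(\Id: H^{w_0}(\T^d) \to L_2(\T^d)) = \sigma_n$, and since $\varphi$ is monotonically increasing with $\varphi(0)=1$, the non-increasing rearrangement of $(1/\varphi(\|k\|))_{k \in \Z^d}$ is obtained by applying the non-increasing transformation $t \mapsto 1/\varphi(1/t)$ (suitably interpreted on the relevant range) to the sequence $(\sigma_n)$. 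More carefully: $\varphi(\|k\|) \le m$ iff $\|k\| \le \varphi^{-1}(m)$ (using a generalized inverse $\varphi^{-1}(m) := \sup\{t \ge 0: \varphi(t) \le m\}$ to avoid continuity assumptions), so the counting functions of the two weight sequences are related by a simple substitution, and hence $a_n(\Id: H^{\varphi(\|\cdot\|)}(\T^d) \to L_2(\T^d))$ equals $1/\varphi(1/\sigma_n)$ up to the usual care at jump points.

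With that reduction in hand, the second step is purely to substitute the two-sided bound on $\sigma_n$ from Theorem~\ref{res:approx_entropy}, namely $\tfrac12 \varepsilon_n \le \sigma_n \le 4\varepsilon_n$, into the monotone function. Since $\varphi$ is increasing, $1/\varphi(1/t)$ is an increasing function of $t$, so $\sigma_n \le 4\varepsilon_n$ gives $a_n = 1/\varphi(1/\sigma_n) \le 1/\varphi(1/(4\varepsilon_n))$, and $\sigma_n \ge \tfrac12\varepsilon_n$ gives $a_n \ge 1/\varphi(1/\sigma_n) \ge 1/\varphi(2/\varepsilon_n)$. This yields exactly the claimed inequalities. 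The restriction to $n \in \N \setminus \{1\}$ enters here because for $n=1$ one has $\varepsilon_1 = 1$ and the quantity $\varphi(2/\varepsilon_1) = \varphi(2)$ need not relate to the initial error $a_1 = 1 = 1/\varphi(0)$ correctly; for $n \ge 2$ the entropy number $\varepsilon_n < 1$, and I would want to double-check that $1/\varepsilon_n \ge 1$ so that the arguments $1/(4\varepsilon_n)$ and $2/\varepsilon_n$ lie in the range where the monotonicity comparison with the $\sigma_n$-relation is valid (in particular that $\varphi$ is only evaluated at nonnegative arguments, which is automatic since $\varepsilon_n \le 1$).

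The main technical obstacle is making the identity $a_n(\Id: H^{\varphi(\|\cdot\|)}(\T^d) \to L_2(\T^d)) = 1/\varphi(1/\sigma_n)$ precise without assuming $\varphi$ is strictly increasing or continuous. The clean way is not to prove an exact identity at all, but to rerun the proof of Theorem~\ref{res:approx_entropy} almost verbatim: note that $\varphi(\|k\|) \le \varphi(m)$ whenever $\|k\| \le m$, hence the grid number $G(m B_{\|\cdot\|}^d)$ still counts frequencies $k$ with $w(k) = \varphi(\|k\|) \le \varphi(m)$, and therefore the rearrangement $\tilde\sigma$ of $(1/\varphi(\|k\|))$ satisfies $\tilde\sigma_{G(m B_{\|\cdot\|}^d)} \le 1/\varphi(m)$. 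Combined with the covering estimates $N_{1/(2+\delta)}(mB_{\|\cdot\|}^d) \ge G(mB_{\|\cdot\|}^d)$ and $N_1(mB_{\|\cdot\|}^d) \le G(mB_{\|\cdot\|}^d)$ from Lemma~\ref{res:grid_numbers}, exactly as in the previous proof, this gives $\tilde\sigma_n \le 1/\varphi(\lfloor 1/((2+\delta)\varepsilon)\rfloor)$ for $\varepsilon > \varepsilon_n$ and $\tilde\sigma_n \ge 1/\varphi(\lceil 1/\varepsilon \rceil)$ for $\varepsilon < \varepsilon_n$. Letting $\delta \to 0$ and $\varepsilon \to \varepsilon_n$, and using monotonicity of $\varphi$ together with $\lfloor 1/(2\varepsilon_n)\rfloor \ge 1/(4\varepsilon_n)$ (valid since $1/(2\varepsilon_n) \ge 1$ for $n \ge 2$) and $\lceil 1/\varepsilon_n \rceil \le 2/\varepsilon_n$, yields the two bounds. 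I expect the floor/ceiling bookkeeping in the last step to be the only place where one must be slightly attentive, and it is precisely what forces the explicit constants $2$ and $1/4$ inside $\varphi$.
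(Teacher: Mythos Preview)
Your proposal is correct and follows essentially the same approach as the paper: the paper too defines $\gamma_n := 1/\varphi(1/\sigma_n)$ for $n>1$ (with $\gamma_1=1$), observes that since $\min_i\|e_i\|=1$ one has $\varphi(\|k\|)=\varphi(\widetilde w(k))$ for $k\neq 0$ so that $(\gamma_n)$ is the non-increasing rearrangement of $(1/\varphi(\|k\|))$, and then simply plugs in the two-sided bound from Theorem~\ref{res:approx_entropy}. Your extra care about non-strict monotonicity and the alternative of rerunning the covering argument directly are sound but not needed for the paper's level of rigor.
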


\begin{proof}
Let $\widetilde{\mathbf w}$ be the weight sequence given by $\widetilde{w}(k) = \max\{1,\|k\|\}$ for $k \in \Z^d$.
Further, let $(\sigma_n)_{n \in \N}$ be the non-increasing rearrangement of $1/\widetilde{\mathbf{w}}$. Note that
$\varphi(\|k\|)= \varphi(\widetilde{w}(k))$ for $k \neq 0$ since $\min_{i=1,\dots,d} \|e_i\|=1$. Put $\gamma_1 = 1$ and
\begin{align}
\label{eq:rearr_mon} 
 \gamma_n = \frac{1}{\varphi(1/\sigma_n)}
\end{align}
for natural $n > 1$. Since $\varphi$ is monotonically increasing the sequence $(\gamma_n)_{n \in \N}$ is non-increasing
and thus the non-increasing rearrangement of $(1/w(k))_{k \in \Z^d}$. It remains to combine \eqref{eq:rearr_mon} with
the finding of Theorem \ref{res:approx_entropy}. 
\end{proof}

The constants in the lower and upper bound of Theorem \ref{res:approx_entropy} do not match. A consequence of Theorem \ref{res:general_limit} below is the limit result
\[ 
 \lim_{n \to \infty} \frac{a_n(\Id: H^{\max\{1,\|\cdot\|\}}(\T^d)  \to L_2(\T^d))}{\varepsilon_{n}(\id:
\ell_{\|\cdot\|}^d \to \ell_{\infty}^d)} = 2,
\]
which suggests that the true constant in the lower and upper bound should be $2$ for sufficiently large $n$. 

\begin{theorem}\label{res:general_limit}
Let $\|\cdot\|$ be a $p$-norm in $\R^d$ for $0<p\leq 1$. Further,  let $\varphi$ be given by $\varphi(t) =
\exp(\beta{g(t))}$ with $\beta > 0$ and monotonically increasing, differentiable $g$ satisfying $g(0) = 0$ and $\lim_{t
\to \infty} g'(t) t^{1-p}= 0$. Recall the weight sequence $\mathbf{w} = \varphi(\|\cdot\|)$ defined in Theorem
\ref{res:approx_entropy_mono}. Using the shorthands
$
 a_n = a_n(\Id: H^{\varphi(\|\cdot\|)}(\T^d) \to L_2(\T^d))
$
and
$ 
 \varepsilon_n = \varepsilon_n(\id: \ell_{\|\cdot\|}^d \to \ell_\infty^d),
$
it holds true that
\[
 \lim_{n \to \infty} a_n
 \varphi(1/(2\varepsilon_n))= 1.
\]
\end{theorem}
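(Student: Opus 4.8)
The plan is to reduce the limit to a precise two‑sided comparison of two ``radii'': the radius $\varrho_n:=1/\sigma_n$ that governs $a_n$ through the rearrangement formula, and the radius $1/(2\varepsilon_n)$ that governs the entropy number through a covering argument. Since $\beta>0$ and $g$ is increasing with $g(0)=0$, the weight $\varphi(\|\cdot\|)$ meets the hypotheses of Theorem~\ref{res:approx_entropy_mono}, and the proofs of Theorems~\ref{res:approx_entropy} and \ref{res:approx_entropy_mono} give, for $n>1$, the exact identity $a_n=1/\varphi(\varrho_n)$, where $(\sigma_n)$ is the non-increasing rearrangement of $(1/\max\{1,\|k\|\})_{k\in\Z^d}$; equivalently $\varrho_n$ is the $n$-th smallest value, counted with multiplicity, of $\max\{1,\|k\|\}$, $k\in\Z^d$. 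Because $\min_i\|e_i\|=1$ forces $\|k\|\to\infty$ along every enumeration of $\Z^d$, we have $\varrho_n\to\infty$, and writing $B:=B_{\|\cdot\|}^d$ one checks directly from the definition of $\sigma_n$ that $G(\varrho_n B)\ge n$ while $G(rB)<n$ for every $r<\varrho_n$. It then suffices to show $g(1/(2\varepsilon_n))-g(\varrho_n)\to 0$, since $a_n\varphi(1/(2\varepsilon_n))=\varphi(1/(2\varepsilon_n))/\varphi(\varrho_n)=\exp\!\big(\beta\,[\,g(1/(2\varepsilon_n))-g(\varrho_n)\,]\big)$.

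The heart of the proof is the sandwich $l(\varrho_n,p,d)\le 1/(2\varepsilon_n)\le L(\varrho_n,p,d)$ for all large $n$ (so that $\varrho_n>\lambda_{\|\cdot\|}(d)$), with $l,L$ as in Lemma~\ref{res:grid_numbers_p_balls}. For the lower bound: for any $\lambda_{\|\cdot\|}(d)/2<r<\varrho_n$ we have $G(rB)<n$, so Lemma~\ref{res:grid_numbers_p_balls} together with rescaling gives $N_{1/(2l(r,p,d))}(B)=N_{1/2}(l(r,p,d)B)\le G(rB)<n$, hence $\varepsilon_n\le 1/(2l(r,p,d))$; letting $r\uparrow\varrho_n$ and using continuity of $r\mapsto l(r,p,d)$ yields $l(\varrho_n,p,d)\le 1/(2\varepsilon_n)$. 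For the upper bound: the pairwise disjoint unit cubes $Q_k$ with $\|k\|\le\varrho_n$ lie inside $L(\varrho_n,p,d)B$ by \eqref{eq:two_sided_inclusion}, so $n\le G(\varrho_n B)\le L(\varrho_n,p,d)^d\,\vol(B)$, i.e.\ $(n/\vol(B))^{1/d}\le L(\varrho_n,p,d)$; combined with the standard volume lower bound $\varepsilon_n\ge \tfrac12(\vol(B)/n)^{1/d}$ (the lower bound in Lemma~\ref{res:entropy_numbers_large_n}) this gives $1/(2\varepsilon_n)\le (n/\vol(B))^{1/d}\le L(\varrho_n,p,d)$.

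From the explicit formulas $l(\varrho,p,d)=(\varrho^{\,p}-\lambda_{\|\cdot\|}(d)^p/2^p)^{1/p}$ and $L(\varrho,p,d)=(\varrho^{\,p}+\lambda_{\|\cdot\|}(d)^p/2^p)^{1/p}$, an elementary expansion of $(1\pm t)^{1/p}$ as $t\to 0^+$ (using $1/p\ge 1$) produces a constant $C=C(\|\cdot\|,p)$ with $|1/(2\varepsilon_n)-\varrho_n|\le C\,\varrho_n^{1-p}$ for large $n$, and in particular $1/(2\varepsilon_n)\ge\varrho_n/2$ eventually. By the mean value theorem, $g(1/(2\varepsilon_n))-g(\varrho_n)=g'(\xi_n)\big(1/(2\varepsilon_n)-\varrho_n\big)$ for some $\xi_n$ between $\varrho_n$ and $1/(2\varepsilon_n)$, so $\xi_n\ge\varrho_n/2\to\infty$ and, since $1-p\ge 0$, $\varrho_n^{1-p}\le 2^{1-p}\xi_n^{1-p}$; hence $|g(1/(2\varepsilon_n))-g(\varrho_n)|\le 2^{1-p}C\,g'(\xi_n)\,\xi_n^{1-p}\to 0$ by the hypothesis $\lim_{t\to\infty}g'(t)t^{1-p}=0$. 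Inserting this into the identity of the first paragraph gives $a_n\varphi(1/(2\varepsilon_n))\to e^{0}=1$.

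The main obstacle is the sandwich step: since $\varepsilon_n$, $N_\varepsilon(B)$ and $G(rB)$ are all step functions, one has to manage strict versus non-strict inequalities carefully and pass to the limit $r\uparrow\varrho_n$ rather than substitute $r=\varrho_n$ outright. Conceptually, the discrepancy $|1/(2\varepsilon_n)-\varrho_n|=O(\varrho_n^{1-p})$ is precisely the ``boundary layer'' of size $\lambda_{\|\cdot\|}(d)$ incurred when a lattice-point count is replaced by a covering/volume count, and the condition $g'(t)t^{1-p}\to 0$ is exactly calibrated to absorb it — which is why that hypothesis appears in the statement.
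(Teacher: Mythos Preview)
Your proof is correct and follows essentially the same approach as the paper: both establish a sandwich showing that $\varrho_n=1/\sigma_n$ and $1/(2\varepsilon_n)$ differ by $O(\varrho_n^{1-p})$ via the grid/covering relations of Lemmas~\ref{res:grid_numbers_p_balls} and~\ref{res:entropy_numbers_large_n}, and then invoke the mean value theorem together with the hypothesis $g'(t)t^{1-p}\to 0$. The only cosmetic difference is the direction of the sandwich: the paper bounds $\varrho_n$ by functions of $1/(2\varepsilon_n)$ (obtaining $h_1(1/(2\varepsilon_n))\le\varrho_n\le h_2(1/(2\varepsilon_n))$ with $h_1(t)=t-1$ and $h_2(t)=((t+1)^p+\lambda_{\|\cdot\|}(d)^p/2^p)^{1/p}$), whereas you reverse the roles and bound $1/(2\varepsilon_n)$ by $l(\varrho_n,p,d)$ and $L(\varrho_n,p,d)$, which sidesteps some of the floor/ceiling bookkeeping.
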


\begin{proof}
Let $(\sigma_n)_{n \in \N}$ be the non-increasing rearrangement of $(1/\max\{1,\|k\|\})_{k \in \Z^d}$ and $(\gamma_n)_{n
\in \N}$ be the non-increasing rearrangement of $(1/\varphi(\|k\|))_{k \in \Z^d}$. Let us first refine the upper bound
\eqref{eq:sigma_n_estimate}. Consider $n \in \N$ sufficiently large such that $\varepsilon_n < 1/2$. With $\delta > 0$
arbitrary, $\varepsilon > 0$ such that $\varepsilon_n < \varepsilon < 1/(2+\delta)$, and $m := \lfloor
1/((2+\delta)\varepsilon) \rfloor$, we obtain
\[ 
 \sigma_n \leq \sigma_{G(m B_p^d)} = 1/m \leq  \frac{(2+\delta)\varepsilon}{1-(2+\delta)\varepsilon}.
\]
Since we may choose $\delta$ arbitrarily close to $0$ and $\varepsilon$ arbitrarily close to $\varepsilon_n$, we obtain
$\sigma_n \leq 1/h_1(1/(2\varepsilon_n))$, where $h_1(t) = t-1$.

To obtain a refinement of the lower bound, choose for $\varepsilon < \varepsilon_n$ the natural number $m = \lceil
1/(2\varepsilon)\rceil$.
Using Lemma \ref{res:grid_numbers_p_balls} we obtain $n \leq N_\varepsilon(B_{\|\cdot\|}^d) \leq N_{1/2}(m
B_{\|\cdot\|}^d) \leq G(\widetilde m B_{\|\cdot\|}^d)$, where
\[
 \widetilde m := \big(m^{p} + \lambda_{\|\cdot\|}(d)^{p}/2^{p}\big)^{1/p}.
\]
Hence,
\[ 
 \sigma_n \geq 1/\widetilde{m} \geq 1/h_2(1/(2\varepsilon)),
\]
where $h_2(t) = ((t+1)^p+(\lambda_{\|\cdot\|}(d)^p/2^p)^{1/p}$.
Since we may choose $\varepsilon$ arbitrarily close to $\varepsilon_n$, we obtain $1/h_2(1/(2\varepsilon_n)) \leq
\sigma_n$.
   
Combining the refined estimates with equation \eqref{eq:rearr_mon}, we obtain from multiplying by
$\varphi(1/(2\varepsilon_n))$ the two-sided estimate
\begin{align}\label{eq:general_limit_two_sided_estimate} 
\frac{\varphi(1/(2\varepsilon_n))}{\varphi(h_2(1/(2\varepsilon_n)))}
\leq \gamma_n \varphi(1/(2\varepsilon_n)) \leq
\frac{\varphi(1/(2\varepsilon_n))}{\varphi(h_1(1/(2\varepsilon_n)))}.
\end{align}
We have $\ln\left(\frac{\varphi(x)}{\varphi(h_1(x))}\right) = \beta(g(x) - g(x-1)) = \beta g'(\xi_x)$ for some $x-1 \leq
\xi_x \leq x$. From the assumptions on $g$, it obviously follows that $\lim_{x \to \infty}g'(\xi_x) = 0$. Hence, we have
$\lim_{n \to \infty}\gamma_n \varphi(1/(2\varepsilon_n)) \leq 1$. For the estimate from below we have to show that
$g(x) - g(h_2(x)) \to 0$ for $x \to \infty$. By the mean value theorem, it follows that
\begin{align*}
|x - h_2(x)| &\leq 1 + |x+1-h_2(x)|\\
& \leq 1 +\frac{\lambda_{\|\cdot\|}(d)^{p}}{p2^p} [(x+1)+ \mu]^{1/p-1}
\end{align*} 
for some $\mu \in [0, \lambda_{\|\cdot\|}(d)^{p}/2^{p}]$. Due to $p \leq 1$ we further may estimate $|x - h_2(x)| \leq
C_d x^{1-p}$ for some $C_d > 0$. Combined with another application of the mean value theorem, this yields
\[
 |g(x) - g(h_2(x))| \leq |g'(\xi)||x - h_2(x)| \leq C_d |g'(\xi)| x^{1-p} \leq C_d|g'(\xi)| \xi^{1-p},
\]
where $\xi \in [x,h_2(x)]$. Since we have assumed $\lim_{x \to \infty} g'(x) x^{1-p} = 0$ it follows that $1 \leq \lim_{n
\to \infty} \gamma_n \varphi(1/(2\varepsilon_n))$.
\end{proof}

\section{Isotropic Sobolev spaces}\label{sec:isotropic_sobolev}

In this section, we give further details and additional remarks to the results presented in Subsection
\ref{sec:intro:isotropic} of the introduction.

\begin{proof}[Proof of Theorem \ref{res:approx_numbers}]
For $0<p<\infty$, Theorem \ref{res:approx_entropy_mono} with $\|\cdot\| = \|\cdot\|_p$ and $\varphi(t) = (1 +
t^p)^{s/p}$ yields
\[ 
 2^{-(1+s/p)} \varepsilon_n(\id: \ell_p^d \to \ell_\infty^d)^s \leq a_n(\Id: H^{s,p}(\T^d) \to L_2(\T^d)) \leq 4^s
\varepsilon_n(\id: \ell_p^d \to \ell_\infty^d)^s.
\]
It remains to apply Proposition \ref{res:entropy_p}. In case $p = \infty$, the argumentation is analogous with
$\varphi(t) = \max\{1,t\}^s$.
\end{proof}
\noindent In the special case $p = \infty$, let us restate Theorem \ref{res:approx_numbers} with explicit expressions
for the equivalence constants.
\begin{theorem}\label{res:approx_numbers_p_infty}
For $p=\infty$ and $s> 0$, we have
\[ 
 2^{-(1+1/p)s} n^{-s/d} \leq a_n (\Id:H^{s,\infty}(\T^d) \to L_2(\T^d)) \leq 8^s n^{-s/d}.
\]
for all $n \in \N$. However, $a_n (\Id:H^{s,\infty}(\T^d) \to L_2(\T^d)) = 1$ as long as $n \leq 2^d$. 
\end{theorem}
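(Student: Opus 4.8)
The plan is to specialize the abstract characterization to the supremum norm. Apply Theorem~\ref{res:approx_entropy_mono} with the genuine norm $\|\cdot\|=\|\cdot\|_\infty$ on $\R^d$ (which satisfies $\min_{i}\|e_i\|_\infty=1$) and the profile $\varphi(t)=\max\{1,t\}^s$, which is monotonically increasing with $\varphi(0)=1$. Writing $a_n=a_n(\Id:H^{s,\infty}(\T^d)\to L_2(\T^d))$ and $\varepsilon_n=\varepsilon_n(\id:\ell_\infty^d\to\ell_\infty^d)$, the theorem gives for every $n\in\N\setminus\{1\}$
\[
 \min\{1,(\varepsilon_n/2)^s\}=\frac{1}{\varphi(2/\varepsilon_n)}\le a_n\le\frac{1}{\varphi(1/(4\varepsilon_n))}=\min\{1,(4\varepsilon_n)^s\},
\]
since $x\mapsto x^s$ commutes with $\min$ for $s>0$. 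Now I feed in Proposition~\ref{res:entropy_infty}: on the right I use $\varepsilon_n\le 2n^{-1/d}$ and drop the minimum to get $a_n\le(8n^{-1/d})^s=8^s n^{-s/d}$; on the left I use $\varepsilon_n\ge n^{-1/d}$ together with $(\varepsilon_n/2)^s\le 1$ to get $a_n\ge(n^{-1/d}/2)^s=2^{-s}n^{-s/d}$, which is $2^{-(1+1/p)s}n^{-s/d}$ since $1/p=0$. The excluded case $n=1$ is handled by the initial-error identity $a_1=\|\Id:H^{s,\infty}(\T^d)\to L_2(\T^d)\|=1$, which lies between $2^{-s}$ and $8^s$.

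For the sharp assertion $a_n=1$ in the range $n\le 2^d$ I would not use the entropy bound (which in that range only yields $2^{-s}\le a_n\le 1$) but argue directly via the rearrangement formula \eqref{eq:appr_nr_rearr}. The weight $w(k)=\max\{1,|k_1|,\dots,|k_d|\}^s$ equals $1$ exactly on those frequency vectors $k\in\Z^d$ with $\max_j|k_j|\le 1$, and there are $3^d$ of them; equivalently $G(B_\infty^d)=3^d$ and $\sigma_{G(B_\infty^d)}=1$. Hence the non-increasing rearrangement $(\sigma_n)_n$ of $(1/w(k))_k$ satisfies $\sigma_n=1$ for $1\le n\le 3^d$, and since $2^d\le 3^d$ we obtain $a_n=\sigma_n=1$ for all $n\le 2^d$, completing the proof.

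I do not anticipate a genuine obstacle: the statement is essentially a dictionary translation of Proposition~\ref{res:entropy_infty} through Theorem~\ref{res:approx_entropy_mono}, raising everything to the power $s$. The only points requiring a little care are (a) peeling off $n=1$, since Theorem~\ref{res:approx_entropy_mono} is stated for $n\in\N\setminus\{1\}$, and (b) recognizing that the two-sided entropy estimate is too coarse to deliver the exact value $a_n=1$ for $n<2^d$, so one must return to \eqref{eq:appr_nr_rearr} and count lattice points in $B_\infty^d$. Checking that the constant $2^{-(1+1/p)s}$ collapses to $2^{-s}$ at $p=\infty$ is immediate from the convention $1/\infty=0$.
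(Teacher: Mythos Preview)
Your proposal is correct and follows the paper's own route exactly: combine Theorem~\ref{res:approx_entropy_mono} with $\varphi(t)=\max\{1,t\}^s$ and Proposition~\ref{res:entropy_infty}. Your explicit treatment of the assertion $a_n=1$ for $n\le 2^d$ via \eqref{eq:appr_nr_rearr} and the count $G(B_\infty^d)=3^d$ is in fact more careful than the paper's one-line proof, since, as you note, the entropy bound alone only delivers $2^{-s}\le a_n\le 1$ in that range.
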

\begin{proof}
Combine Theorem \ref{res:approx_entropy_mono} with $\varphi(t) = \max\{1,t\}^s$ and Proposition \ref{res:entropy_infty}.
\end{proof}

\noindent Theorem \ref{res:general_limit} applied to the approximation numbers $a_n(\Id: H^{s,p}(\T^d) \to L_2(\T^d))$
yields the following corollary.
\begin{corollary}\label{res:iso_asymp_constant} Let $0<p\leq \infty$ and $s>0$. Then 
\[ 
 \lim_{n \to \infty} n^{s/d} a_n(\Id: H^{s,p}(\T^d) \to L_2(\T^d)) = (\vol(B_p^d))^{s/d} \asymp d^{-s/p}.
\]
\end{corollary}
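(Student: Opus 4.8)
The plan is to obtain the limit as a direct application of Theorem \ref{res:general_limit} combined with the limit statement \eqref{eq:entropy_limit} for entropy numbers. First I would check that the hypotheses of Theorem \ref{res:general_limit} are met. For $0<p<\infty$ we take $\|\cdot\| = \|\cdot\|_p$, which is a $p$-norm only when $p\leq 1$; for $p>1$ it is an honest norm and one should invoke (or re-derive) the analogue of Theorem \ref{res:general_limit} in that range, where the roles of $l(r,p,d)$ and $L(r,p,d)$ are played by the ordinary triangle inequality and the argument is in fact simpler. In any case we want $\varphi(t) = (1+t^p)^{s/p}$, and to fit the template $\varphi(t) = \exp(\beta g(t))$ we would like $g(t) = \log(1+t^p)$ and $\beta = s/p$; one verifies $g(0)=0$, $g$ increasing and differentiable, and $g'(t)t^{1-p} = \frac{p t^{p-1}}{1+t^p} t^{1-p} = \frac{p}{1+t^p} \to 0$ as $t\to\infty$, so the decay condition holds. (For $p=\infty$ one uses $\varphi(t)=\max\{1,t\}^s$ and the trivial behaviour of $\varepsilon_n(\id:\ell_\infty^d\to\ell_\infty^d) = \tfrac12 n^{-1/d}$ in the limit, which makes the computation immediate.)

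Next I would feed these into the conclusion of Theorem \ref{res:general_limit}, namely $\lim_{n\to\infty} a_n \varphi(1/(2\varepsilon_n)) = 1$, where $a_n = a_n(\Id: H^{s,p}(\T^d)\to L_2(\T^d))$ and $\varepsilon_n = \varepsilon_n(\id:\ell_p^d\to\ell_\infty^d)$. Since $\varphi(t) = (1+t^p)^{s/p}$, we have $\varphi(1/(2\varepsilon_n)) = (1 + (2\varepsilon_n)^{-p})^{s/p}$, so the theorem gives $a_n \sim (1+(2\varepsilon_n)^{-p})^{-s/p}$ as $n\to\infty$. Multiplying by $n^{s/d}$, it remains to understand $\lim_{n\to\infty} n^{s/d} (1+(2\varepsilon_n)^{-p})^{-s/p}$. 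By \eqref{eq:entropy_limit} (or Lemma \ref{res:entropy_numbers_large_n}) we have $\varepsilon_n \to 0$ with $n^{1/d}\varepsilon_n \to \tfrac12 \vol(B_p^d)^{1/d}$, hence $(2\varepsilon_n)^{-p} \to \infty$ and $1 + (2\varepsilon_n)^{-p} \sim (2\varepsilon_n)^{-p}$, so $(1+(2\varepsilon_n)^{-p})^{-s/p} \sim (2\varepsilon_n)^{s}$. Therefore
\[
 n^{s/d} a_n \sim n^{s/d} (2\varepsilon_n)^s = (n^{1/d} \cdot 2\varepsilon_n)^s \longrightarrow \big(\vol(B_p^d)^{1/d}\big)^s = \vol(B_p^d)^{s/d}.
\]

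Finally, the asymptotic equivalence $\vol(B_p^d)^{s/d} \asymp d^{-s/p}$ follows from the known volume formula $\vol(B_p^d) = (2\Gamma(1+1/p))^d / \Gamma(1+d/p)$ together with Stirling's formula, which gives $\vol(B_p^d)^{1/d} \asymp d^{-1/p}$ (this is exactly the estimate cited as \cite{W05} in the paragraph after \eqref{eq:entropy_limit}); raising to the power $s$ preserves the two-sided bound with constants depending only on $s$ and $p$. I do not expect a genuine obstacle here: the only point requiring slight care is the $p>1$ case, where $\|\cdot\|_p$ is not a $p$-norm and one must either quote a companion statement to Theorem \ref{res:general_limit} for norms or note that the proof of that theorem goes through verbatim (indeed more easily) using $l(r)=r-\lambda(d)/2$, $L(r)=r+\lambda(d)/2$ in place of the $p$-power expressions, since for $p\ge 1$ one has $\|\cdot\|_p$-balls nested between cubes in exactly the analogous way.
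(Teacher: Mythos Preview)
Your proposal is correct and follows exactly the route the paper takes: the corollary is stated as an immediate application of Theorem \ref{res:general_limit} together with \eqref{eq:entropy_limit}, and you have filled in precisely those details. One small clarification: your concern about the case $p>1$ is unnecessary, since an ordinary norm is in particular a $1$-norm in the sense required by Theorem \ref{res:general_limit}; hence the theorem applies directly with the parameter there taken as $\tilde p=\min\{1,p\}$, and the decay condition becomes $g'(t)t^{1-\tilde p}\to 0$, which your computation already verifies.
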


\begin{remark}
This work continues the considerations made in \cite{KSU13}. Theorems
\ref{res:approx_numbers}, \ref{res:approx_entropy}, and \ref{res:approx_numbers_p_infty} extend \cite[Thm. 4.3, 4.11,
and 4.14]{KSU13}, which covered only the cases $p=1$, $p=2$, and $p=2s$. Moreover, by
Theorem \ref{res:approx_numbers} we close the logarithmic gap in \cite[Thm. 4.6]{KSU13} and confirm \cite[Rem.
4.7]{KSU13}. 
\end{remark}

\begin{remark}[Approximation in $L_\infty$]
In our results the approximation error is measured in $L_2(\T^d)$. It would be highly interesting to have analogs to
Theorem \ref{res:approx_numbers}, Theorem \ref{res:approx_entropy}, and Theorem \ref{res:approx_numbers_p_infty} for the
approximation in $L_\infty(\T^d)$. The recent work \cite{CKS14} provides the formula
\[ 
 a_n(\Id: H^{s,p}(\T^d) \to L_\infty(\T^d)) = (\sum_{j \geq n} \sigma_j^2)^{1/2},
\]
where $(\sigma_n)_{n \in \N}$ is again the non-increasing rearrangement of the inverse weight sequence
$(1/w_{s,p}(k))_{k \in \Z^d}$. In principle, this allows to prove an analog to Theorem \ref{res:approx_entropy}.
However, the constants in the known bounds for the entropy numbers of the embbeding $\id: \ell_p^d \to \ell_\infty^d$
are not good enough to obtain meaningful preasymptotics.
\end{remark}

%


\section{Spaces of Gevrey type}\label{sec:gevrey}
In this section, we study approximation numbers of spaces $G^{\alpha,\beta,p}(\T^d) =
H^{\mathbf{w}^G_{\alpha,\beta,p}}(\T^d)$ with exponential weights given by
\begin{align}\label{eq:gevrey_weights}
 w^G_{\alpha,\beta,p}(k) = \exp(\beta \|k\|_p^\alpha), \quad k \in \Z^d.
\end{align}
As already indicated in the introduction, the study of spaces $G^{\alpha,\beta,p}(\T^d)$ is motivated by classical
Gevrey classes. Let us elaborate a bit more on this before we discuss our results in detail. For the interested reader
we note that a standard reference on Gevrey spaces and its applications is Rodino's book \cite{R93}.

The classical Gevrey class ${\bf G}^\sigma(\mathbb{R}^d)$, $\sigma>1$, consists of all
$f\in C^\infty(\mathbb{R}^d)$ with the following property:
\begin{quote}
For every compact subset $K\subset\R^d$ there are constants 
$C,R>0$ such that for all $x\in K$ and all multi-indices $\alpha=(\alpha_1,\ldots,\alpha_d)\in\N_0^d$ the inequality 
$$
|D^\alpha f(x)|\leq C R^{\alpha_1+\ldots+\alpha_d}(\alpha_1!\cdots\alpha_d!)^s
$$
holds.
\end{quote}
If $f$ is $2\pi$-periodic in each coordinate, i.e. if $f\in C^\infty(\mathbb{T}^d)$, the growth 
conditions on the derivatives can be rephrased in terms of Fourier coefficients: $f$ belongs to 
${\bf G}^\sigma(\mathbb{R}^d)$ if and only if there exists a constant $\beta>0$ such that
$$
\sum_{k\in\Z^d}\exp(2\beta\,\Vert k \Vert_1^{1/\sigma})|c_k(f)|^2<\infty\,.
$$
Here one can replace $\Vert k\Vert_1$ by any other (quasi-)norm on $\R^d$. This gives only a different constant $\beta$,
but the 
exponent $1/\sigma$ does not change. This was the motivation in \cite{KP15} to introduce the periodic Gevrey spaces 
$G^{\alpha,\beta,p}(\T^d)$, $0<\alpha<1$, $0<\beta,p<\infty$, which
consist of all $f\in C^\infty(\mathbb{T}^d)$ such that the norm
$$
\Vert f |G^{\alpha,\beta,p}(\T^d)\Vert:=\Big( \sum_{k\in\Z^d}\exp(2\beta\,\Vert k
\Vert_p^\alpha)|c_k(f)|^2\Big)^{1/2}
$$
is finite. For convenience of notation we changed the exponent, setting $\alpha:=1/\sigma$. Clearly, all these spaces
are 
Hilbert spaces.

In the definition of $G^{\alpha,\beta,p}(\mathbb{T}^d)$ one can extend the range 
of parameters to $\alpha>0$. The decisive difference is that the periodic Gevrey spaces, i.e. those with $0<\alpha<1$,
contain
non-analytic functions, while for $\alpha\ge 1$ all functions in $G^{\alpha,\beta,p}(\mathbb{T}^d)$ are analytic.

We come to the first result of this section. As an immediate consequence of Propositions \ref{res:entropy_infty},
\ref{res:entropy_p} and Theorem \ref{res:approx_entropy_mono} we obtain
\begin{theorem}\label{res:gevrey}
Let $\alpha,\beta > 0$ and $0<p\leq \infty$. Consider the approximation numbers
\[ 
 a_n := a_n(\Id: G^{\alpha,\beta,p}(\T^d) \to L_2(\T^d)).
\]
\begin{enumerate}[label=(\roman*)]
 \item For $1 \leq n \leq d$, we have $a_n \asymp_{\alpha,\beta,p} 1$.
 \item For $d \leq n \leq 2^d$, we have
 \[ 
  -\ln(a_n) \asymp_{\alpha,p} \beta \left[\frac{\log(n)}{\log(1+d/\log(n))}\right]^{\alpha/p}.
 \]
 \item For $n \geq 2^d$, we have
 \[ 
  -\ln(a_n) \asymp_{\alpha,p} \beta d^{\alpha/p} n^{\alpha/d}.
 \]
\end{enumerate}
\end{theorem}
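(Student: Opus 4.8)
The plan is to apply Theorem \ref{res:approx_entropy_mono} with the exponential weight $\varphi(t) = \exp(\beta t^\alpha)$ and $\|\cdot\| = \|\cdot\|_p$, and then translate the two-sided entropy bounds of Propositions \ref{res:entropy_infty} and \ref{res:entropy_p} into bounds on $-\ln(a_n)$. First I would note that $\varphi$ is monotonically increasing with $\varphi(0) = 1$, so the theorem applies for all $n \in \N\setminus\{1\}$, giving
\[
 \exp\!\big(-\beta\,(2/\varepsilon_n)^\alpha\big) \leq a_n \leq \exp\!\big(-\beta\,(1/(4\varepsilon_n))^\alpha\big),
\]
where $\varepsilon_n = \varepsilon_n(\id: \ell_p^d \to \ell_\infty^d)$. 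Taking $-\ln$ of all sides yields
\[
 \beta\, 4^{-\alpha}\, \varepsilon_n^{-\alpha} \leq -\ln(a_n) \leq \beta\, 2^\alpha\, \varepsilon_n^{-\alpha},
\]
so up to constants depending only on $\alpha$ we have $-\ln(a_n) \asymp_\alpha \beta\,\varepsilon_n^{-\alpha}$ for $n \geq 2$, while for $n = 1$ the initial error is $1$ so $a_1 = 1$.

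Next I would insert the three regimes from Proposition \ref{res:entropy_p} (for $0 < p < \infty$) and Proposition \ref{res:entropy_infty} (for $p = \infty$). For $1 \leq n \leq d$ we have $\varepsilon_n \asymp 1$, hence $-\ln(a_n) \asymp_{\alpha,p} \beta$, which is equivalent to $a_n \asymp_{\alpha,\beta,p} 1$; together with $a_n \leq a_1 = 1$ and monotonicity this gives part (i) (here one must be slightly careful that the hidden constants now also swallow $\beta$, but since $\beta$ is fixed this is fine, and the case $n=1$ is trivially $1$). For $d \leq n \leq 2^d$, substituting $\varepsilon_n \asymp (\log(1+d/\log n)/\log n)^{1/p}$ gives $\varepsilon_n^{-\alpha} \asymp (\log n / \log(1+d/\log n))^{\alpha/p}$, whence $-\ln(a_n) \asymp_{\alpha,p} \beta\,[\log(n)/\log(1+d/\log n)]^{\alpha/p}$, which is part (ii). For $n \geq 2^d$, substituting $\varepsilon_n \asymp d^{-1/p} n^{-1/d}$ gives $\varepsilon_n^{-\alpha} \asymp d^{\alpha/p} n^{\alpha/d}$, hence $-\ln(a_n) \asymp_{\alpha,p} \beta\, d^{\alpha/p} n^{\alpha/d}$, which is part (iii). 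For $p = \infty$ one uses $\varepsilon_n = 1$ for $n < 2^d$ (so parts (i) and (ii) collapse to $a_n \asymp_{\alpha,\beta} 1$, consistent with the stated formula since then $\log(1+d/\log n)$ and $\log n$ are comparable in that range only loosely — but the statement is phrased as an equivalence of $-\ln(a_n)$ with a quantity that is itself $\asymp \beta$ there) and $\varepsilon_n \asymp n^{-1/d}$ for $n \geq 2^d$, matching the $p = \infty$ case of (iii).

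The argument is essentially bookkeeping once the abstract characterization is in place, so there is no serious obstacle; the only point requiring a little care is the regime $d \leq n \leq 2^d$ in the edge case $p = \infty$, where the claimed formula must be read as an equivalence of $-\ln(a_n)$ with $\beta[\log n/\log(1+d/\log n)]^{\alpha/p}$ and one should check that the hidden constants genuinely make both sides comparable — this follows because in that range $\log(1+d/\log n) \asymp \log(d/\log n)$ is bounded below and above by absolute multiples of $\log n$ only when $n$ is polynomial in $d$, so strictly speaking for $p = \infty$ one records that $\varepsilon_n \equiv 1$ and both the upper and lower expressions in the claimed two-sided bound are then $\asymp \beta$, which is consistent. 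I would therefore organize the proof as: (1) invoke Theorem \ref{res:approx_entropy_mono}, (2) take logarithms to get $-\ln(a_n) \asymp_\alpha \beta\varepsilon_n^{-\alpha}$, (3) plug in Propositions \ref{res:entropy_infty} and \ref{res:entropy_p} case by case, handling $n = 1$ separately.
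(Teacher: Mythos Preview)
Your proposal is correct and follows exactly the approach the paper takes: the paper states Theorem \ref{res:gevrey} as ``an immediate consequence of Propositions \ref{res:entropy_infty}, \ref{res:entropy_p} and Theorem \ref{res:approx_entropy_mono}'' without further argument, and your write-up simply supplies the details of that deduction (apply the abstract characterization with $\varphi(t)=\exp(\beta t^\alpha)$, take logarithms, and insert the known entropy bounds). Your handling of $n=1$ and of the $p=\infty$ case is a little wordier than necessary --- for $p=\infty$ one just reads $[\cdot]^{\alpha/p}=[\cdot]^0=1$ --- but nothing is missing.
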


The limit result in Theorem \ref{res:general_limit} can be specialized as follows. Unfortunately, our proof technique
does not work for classes of analytic functions.

\begin{theorem}\label{limit:gevrey}
Let $0 < p \leq \infty$, $0< \alpha < \min\{1,p\}$, and $\beta > 0$. For
\[ 
 a_n := a_n(\Id: G^{\alpha,\beta,p}(\T^d) \to L_2(\T^d)),
\]
we have
\[ 
 \lim_{n \to \infty} a_n \cdot \exp(\beta \vol(B_p^d)^{-\alpha/d} n^{\alpha/d}) = 1.
\]
\end{theorem}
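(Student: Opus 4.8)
The plan is to apply Theorem \ref{res:general_limit} with the specific choice $\varphi(t) = \exp(\beta t^\alpha)$, i.e. $g(t) = t^\alpha$, and then feed in the known limit \eqref{eq:entropy_limit} for the entropy numbers. First I would check the hypotheses of Theorem \ref{res:general_limit}: the function $g(t) = t^\alpha$ is monotonically increasing and differentiable on $(0,\infty)$ with $g(0) = 0$, and $g'(t) t^{1-p} = \alpha t^{\alpha-1} t^{1-p} = \alpha t^{\alpha - p}$, which tends to $0$ as $t \to \infty$ precisely because $\alpha < p$. This is exactly where the restriction $\alpha < \min\{1,p\}$ enters — the condition $\alpha < 1$ ensures $\|\cdot\|_p$ with the relevant $p \le 1$ regime is a $p$-norm (more precisely, we need $0<p\le 1$ for Lemma \ref{res:grid_numbers_p_balls}; for $p > 1$ one works with $\|\cdot\|_p$ directly as a norm and the same volume arguments go through, or one simply invokes $\varepsilon_n \to 0$ and the elementary asymptotics), while $\alpha < p$ secures the decay of $g'(t)t^{1-p}$. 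So Theorem \ref{res:general_limit} gives
\[
 \lim_{n \to \infty} a_n \, \varphi\!\left(\frac{1}{2\varepsilon_n}\right) = \lim_{n \to \infty} a_n \exp\!\left(\beta (2\varepsilon_n)^{-\alpha}\right) = 1,
\]
where $\varepsilon_n = \varepsilon_n(\id: \ell_p^d \to \ell_\infty^d)$.

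The remaining step is to replace $(2\varepsilon_n)^{-\alpha}$ by $\vol(B_p^d)^{-\alpha/d} n^{\alpha/d}$. By \eqref{eq:entropy_limit} we have $\lim_{n \to \infty} n^{1/d}\varepsilon_n = \tfrac12 \vol(B_p^d)^{1/d}$, hence $\lim_{n\to\infty} (2\varepsilon_n)^{-\alpha} n^{-\alpha/d} = \vol(B_p^d)^{-\alpha/d}$, equivalently
\[
 \exp\!\left(\beta (2\varepsilon_n)^{-\alpha}\right) \Big/ \exp\!\left(\beta \vol(B_p^d)^{-\alpha/d} n^{\alpha/d}\right) = \exp\!\left(\beta \left[(2\varepsilon_n)^{-\alpha} - \vol(B_p^d)^{-\alpha/d} n^{\alpha/d}\right]\right).
\]
For this ratio to converge to $1$ it does not suffice that the bracket's two terms have the same leading order; I need the \emph{difference} $(2\varepsilon_n)^{-\alpha} - \vol(B_p^d)^{-\alpha/d} n^{\alpha/d}$ to tend to $0$. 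This is the main obstacle, and it is why I would not merely quote \eqref{eq:entropy_limit} but rather the sharper two-sided bound from Lemma \ref{res:entropy_numbers_large_n}. That lemma sandwiches $\varepsilon_n$ between $\tfrac12 (n/\vol(B_p^d))^{-1/d}$ and $\tfrac12((n/\vol(B_p^d))^{p/d} - 2^{1-p}d^{1/p})^{-1/p}$; raising to the power $-\alpha$ and expanding, the correction term $2^{1-p}d^{1/p}$ is of lower order than $(n/\vol(B_p^d))^{p/d}$ as $n \to \infty$ (with $d$ fixed), so a Taylor expansion of $x \mapsto x^{-\alpha/p}$ shows $(2\varepsilon_n)^{-\alpha} = \vol(B_p^d)^{-\alpha/d} n^{\alpha/d} + O(n^{\alpha/d - p/d})$, and since $\alpha < p$ the error term vanishes. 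Actually a cleaner route, which avoids re-deriving these asymptotics, is to observe that Theorem \ref{res:general_limit}'s proof already works with $1/(2\varepsilon_n)$ replaced by the quantities $h_1, h_2$ built from the grid-number bounds; I would instead track directly, via \eqref{eq:general_limit_two_sided_estimate} specialized to $g(t)=t^\alpha$, that both $g(1/(2\varepsilon_n)) - g(h_1(1/(2\varepsilon_n)))$ and $g(1/(2\varepsilon_n)) - g(h_2(1/(2\varepsilon_n)))$ tend to $0$, and simultaneously that $(1/(2\varepsilon_n))^\alpha - \vol(B_p^d)^{-\alpha/d}n^{\alpha/d} \to 0$ using $m = \lceil 1/(2\varepsilon)\rceil \asymp (n/\vol(B_p^d))^{1/d}$ from the grid-point count $G(\widetilde m B_p^d) \approx \vol(B_p^d)\widetilde m^d$.

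In summary, the proof has three moves: (1) verify that $\varphi = \exp(\beta(\cdot)^\alpha)$ satisfies the hypotheses of Theorem \ref{res:general_limit}, which forces exactly $\alpha < \min\{1,p\}$; (2) invoke that theorem to get $a_n \exp(\beta(2\varepsilon_n)^{-\alpha}) \to 1$; (3) use Lemma \ref{res:entropy_numbers_large_n} (equivalently the refined volume asymptotics behind \eqref{eq:entropy_limit}) to show $(2\varepsilon_n)^{-\alpha} - \vol(B_p^d)^{-\alpha/d} n^{\alpha/d} \to 0$, and conclude. The delicate point throughout is that exponential weights amplify errors, so every estimate must be on the \emph{difference} of exponents rather than their ratio; this is handled by the quantitative sandwich in Lemma \ref{res:entropy_numbers_large_n} together with $\alpha < p$, exactly as the $p$-norm hypothesis and the condition $\lim_{t\to\infty} g'(t)t^{1-p} = 0$ in Theorem \ref{res:general_limit} were designed to allow.
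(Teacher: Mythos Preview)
Your proposal is correct and follows the paper's approach closely: the paper combines the two-sided estimate \eqref{eq:general_limit_two_sided_estimate} directly with Lemma \ref{res:entropy_numbers_large_n} (setting $x_n=(n/\vol(B_p^d))^{1/d}$ and introducing a further function $h_3$ coming from the upper entropy bound) to sandwich $a_n\exp(\beta x_n^\alpha)$, and then shows $x_n^\alpha-h_i(x_n)^\alpha\to 0$ via the same mean-value reasoning, which is where $\alpha<\tilde p:=\min\{1,p\}$ is used. Your two-step variant---first apply Theorem \ref{res:general_limit} as a black box, then show $(2\varepsilon_n)^{-\alpha}-x_n^\alpha\to 0$ from Lemma \ref{res:entropy_numbers_large_n}---is a legitimate reorganization of the same ingredients; just note that for $p>1$ the relevant quasi-norm parameter is $\tilde p=1$, so the condition $\alpha<1$ enters through the decay hypothesis $g'(t)t^{1-\tilde p}\to 0$, not through any property of $\|\cdot\|_p$ itself.
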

\begin{proof}
Let $\tilde p := \min \{1,p\}$. Further, let $h_2(x) = ((x+1)^{\tilde p}+d^{\tilde p/p}/2^{\tilde p})^{1/\tilde{p}}$ and
$h_3(x) = (x^{\tilde p} - d^{\tilde p/p}/2^{1-\tilde p})^{1/\tilde{p}}-1$. If we put $x_n = n^{1/d} \vol(B_p^d)^{-1/d}$,
then the two-sided estimate \eqref{eq:general_limit_two_sided_estimate} in combination with Lemma
\ref{res:entropy_numbers_large_n} can be reformulated as
\[ 
 \exp(\beta (x_n^\alpha - h_2(x_n)^\alpha)) \leq a_n \exp(\beta \vol(B_p^d)^{-\alpha/d} n^{\alpha/d}) \leq \exp(\beta(x_n^\alpha -
h_3(x_n)^\alpha)).
\]
Copying the arguments given below of \eqref{eq:general_limit_two_sided_estimate} we conclude that $\lim_{n \to \infty}
(x_n^\alpha - h_2(x_n)^\alpha) = 0$ if $\alpha < \tilde p$. Using similar arguments, we also get that $\lim_{n \to
\infty} (x_n^\alpha - h_3(x_n)^\alpha) = 0$ if $\alpha < \tilde p$.
\end{proof}

\subsection{A connection with spaces of dominating mixed smoothness}\label{sec:preasymptotic_similarity}

In the special situation $\alpha = p$, the estimate in Theorem \ref{res:gevrey} (ii) can be written more transparently.
Namely, for $d \leq n \leq 2^d$, we have constants $c_1(p)$ and $c_2(p)$ such that 
\begin{align}\label{eq:gevrey_quasi_polynomial_decay} 
 n^{-\frac{c_1(p) \beta}{\log(1+d/\log(n))}}
  \leq a_n(\Id: G^{p,\beta,p}(\T^d) \to L_2(\T^d)) \leq
  n^{-\frac{c_2 \beta}{\log(1+d/\log(n))}} \leq n^{-\frac{c_2(p) \beta}{\log(1+d)}}.
\end{align}
We see that the dimension $d$ affects the polynomial decay in $n$ only logarithmically. 
In information-based complexity, such a decay behavior is called \emph{quasi-polynomial}.
This observation is highly remarkable for the following reason. The preasymptotic
characteristics in \eqref{eq:gevrey_quasi_polynomial_decay} closely resemble the preasymptotics observed in \cite{KSU15}
for embeddings of Sobolev spaces with dominating mixed smoothness. Concretely, the recent paper \cite{KSU15}, involving
two of the present authors, studies approximation numbers of the embedding $\Id: H_{\text{mix}}^s(\T^d) \to L_2(\T^d)$,
where the Sobolev space with dominating mixed smoothness $H_{\text{mix}}^s(\T^d)$ is equipped with one of the---in the
classical sense equivalent---norms
\[ 
 \|f | H^{s,p}_{\text{mix}}(\T^d)\| := \left( \sum_{k \in \Z^d} |c_k(f)|^2 \prod_{j=1}^d (1+ |k_j|^p)^{2s/p}
\right)^{1/2}, \quad p \in \{1,2\}.
\] 
Now, in the preasymptotic range $1 \leq n \leq 4^d$, the authors of \cite{KSU15} observe
\begin{align}\label{eq:mixed_quasi_polynomial_decay} 
 2^{-s} \left(\frac{1}{2n}\right)^{\frac{s}{c_1(p) + \log(1/2+d/\log(n))}} \leq a_n ( \Id: H^{s,p}_{\text{mix}}(\T^d)
\to L_2(\T^d) ) \leq \left( \frac{e^2}{n} \right)^{\frac{c_2(p)s}{2 + \log_2 d}},
\end{align}
where $c_1(1) = 0$, $c_1(2) = 2$, and $c_2(p) = 1/p$ for $p \in \{1,2\}$, see \cite[Thm. 4.9, 4.10, 4.17]{KSU15}.

The close resemblance of \eqref{eq:gevrey_quasi_polynomial_decay} and \eqref{eq:mixed_quasi_polynomial_decay} is rather
counterintuitive. After all, the space $G^{p,s,p}(\T^d)$ contains much smoother functions than the Sobolev space with
dominating mixed regularity $H^{s,p}_{\text{mix}}(\T^d)$, which is clearly visible in the asymptotic decay, see Remark
\ref{rem:gevrey_mixed}. But apparently, the stronger notion of smoothness does not pay off in the preasymptotic range.
Let us try to gain a deeper understanding of this unexpected relationship between spaces of Gevrey type and Sobolev
spaces of dominating mixed smoothness. From the simple estimate $\prod_{j=1}^d (1+|k_j|^p)^{s/p} \leq \exp(s/p
\|k\|_p^p)$ we conclude that we have the norm-one embedding
\begin{align}\label{eq:gevrey_mixed_norm_one_embedding}
 G^{p,s/p,p}(\T^d) \hookrightarrow H^{s,p}_{\text{mix}}(\T^d).
\end{align}
Hence, the lower bound in \eqref{eq:gevrey_quasi_polynomial_decay}, with $\beta=s/p$, yields a lower bound for the
approximation numbers $a_n ( \Id: H^{s,p}_{\text{mix}}(\T^d) \to L_2(\T^d) )$, which is only slightly worse than
\eqref{eq:mixed_quasi_polynomial_decay} with regard to the polynomial decay in $n$. Note that
\eqref{eq:mixed_quasi_polynomial_decay} has been obtained by doing the combinatorics explicitly for this special
situation, whereas \eqref{eq:gevrey_quasi_polynomial_decay} followed immediately from the characterization provided by
Theorem \ref{res:approx_entropy_mono} and the known behavior of the entropy numbers $\varepsilon_n(\id: \ell_p^d \to
\ell_\infty^d)$, see Proposition \ref{res:entropy_p}.

In view of the norm-one embedding \eqref{eq:gevrey_mixed_norm_one_embedding}, the surprising part in fact is that the
upper bound in \eqref{eq:mixed_quasi_polynomial_decay} is not substantially worse than
\eqref{eq:gevrey_quasi_polynomial_decay}. For the simplest case $p=1$ and $s=1$, there is a good explanation in terms of
grid numbers. The interested reader will find it easy to generalize this to $s > 1$. Consider the grid numbers of the
$\ell_1^d$-ball
\[ 
 \ln(r) B_1^d = \{x \in \R^d: \exp(\|x\|_1) \leq r \}
\]
and the hyperbolic cross
\[ 
 \mathcal H_r^d := \{ x \in \R^d: \prod_{j=1}^d (1+|x_j|) \leq r \}.
\]
The first determine the behavior of the approximation numbers $a_n(\Id: G^{1,1,1}(\T^d) \to L_2(\T^d))$, since $G(\ln(r)
B_1^d) = \sharp\{ k \in \Z^d: w_{1,1,1}(k) \leq r \}$ (recall the considerations made in Section
\ref{sec:approximation_numbers}). The latter determine the approximation numbers $a_n ( \Id: H^{s,p}_{\text{mix}}(\T^d)
\to L_2(\T^d) )$. We will show now that these grid numbers behave sufficiently similar for $1\leq r \leq 2^d$. Essential
ingredient of the proof is the observation that, for $l \leq \log_2(r)$, the projections $P_l\ln(r) B_1^d$ and
$P_l\mathcal H_r^d$  have similar volumes, where $P_l: \R^d \to \R^l$, $(x_1,\dots,x_d) \mapsto (x_1,\dots,x_l)$.

\begin{lemma}\label{res:gevrey_mixed_grid_numbers}
Let $1 \leq r \leq 2^d$. Then, we have
\[ 
 G(\ln(r) B_1^d) \leq G(\mathcal H_r^d) \leq r G(c\ln(r) B_1^d),
\]
where $c=1+1/\ln(2)$.
\end{lemma}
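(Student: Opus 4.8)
The plan is to treat the two inequalities separately, the left one being immediate and the right one carrying the content. For the lower bound I would simply observe the set inclusion $\ln(r)B_1^d \subseteq \mathcal{H}_r^d$: if $\|x\|_1\le\ln r$ then $\prod_{j=1}^d(1+|x_j|)\le\prod_{j=1}^d e^{|x_j|}=e^{\|x\|_1}\le r$. Since grid numbers are monotone under inclusion, $G(\ln(r)B_1^d)\le G(\mathcal{H}_r^d)$, and the case $r=1$ (where both bodies reduce to $\{0\}$) is trivial, so from now on I assume $r>1$. For the upper bound the crucial idea is to sort lattice points by their support: if $k\in\mathcal{H}_r^d\cap\Z^d$ has support $S$ with $|S|=l$, then $2^l\le\prod_{j\in S}(1+|k_j|)\le r$, so only values $l\le\log_2 r$ occur, and restricting $k$ to the coordinates in $S$ sets up a bijection between the lattice points of $\mathcal{H}_r^d$ with support exactly $S$ and the full-support lattice points of the $l$-dimensional cross $\mathcal{H}_r^l$. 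Writing $m_l(\rho)$ and $n_l(\rho)$ for the number of $y\in\Z^l$ with all $y_j\neq 0$ and $\prod_j(1+|y_j|)\le\rho$, respectively $\sum_j|y_j|\le\rho$, this gives $G(\mathcal{H}_r^d)=\sum_{l\le\log_2 r}\binom{d}{l}m_l(r)$ and $G(c\ln(r)B_1^d)=\sum_{l=0}^d\binom{d}{l}n_l(c\ln r)$. Discarding the nonnegative terms with $l>\log_2 r$ in the second sum, it suffices to prove, for every integer $0\le l\le\log_2 r$,
\[ m_l(r)\le r\,n_l(c\ln r). \]

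I would establish this by comparing lattice-point counts with volumes. Splitting off signs, $m_l(r)=2^l\,\#\{b\in\Z^l:b_j\ge 2,\ \prod_j b_j\le r\}$; putting the pairwise disjoint unit cubes $\prod_j[b_j-1,b_j]$ below the points $b$ — all of which lie in $\{x\in[1,\infty)^l:\prod_j x_j\le r\}$ — shows $m_l(r)\le 2^l\vol\{x\in[1,\infty)^l:\prod_j x_j\le r\}$, and the substitution $x_j=e^{u_j}$ turns that volume into $\int_{u\ge 0,\ \sum u_j\le\ln r}e^{\sum u_j}\,du\le r\,(\ln r)^l/l!$. In the other direction, $n_l(\rho)=2^l\,\#\{a\in\Z^l:a_j\ge 1,\ \sum_j a_j\le\rho\}$, and rounding the points of $\{x\in[1,\infty)^l:\sum_j x_j\le\rho\}$ down coordinatewise gives $n_l(\rho)\ge 2^l\vol\{x\in[1,\infty)^l:\sum_j x_j\le\rho\}=2^l(\rho-l)_+^l/l!$. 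Hence it only remains to verify $r\,(\ln r)^l\le r\,(c\ln r-l)^l$, i.e. $c\ln r-l\ge\ln r$, which with $c=1+1/\ln 2$ is precisely the hypothesis $l\le(c-1)\ln r=\log_2 r$ (and $c\ln r-l\ge\ln r\ge 0$ keeps the base nonnegative). Summing the per-$l$ estimate against $\binom{d}{l}$ then yields $G(\mathcal{H}_r^d)\le r\,G(c\ln(r)B_1^d)$.

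The main obstacle is arranging the two volume estimates so that they differ by exactly the factor $r$ and a radius shifted by $l$: the hyperbolic-cross volume is not directly comparable to an $\ell_1$-ball volume, and the logarithmic change of variables $x_j=e^{u_j}$ — which is exactly the statement that the projections $P_l\ln(r)B_1^d$ and $P_l\mathcal{H}_r^d$ have comparable volumes — is what makes the comparison work, with the restriction $l\le\log_2 r$ coming for free from the support bound. Everything else (the sign splittings, the disjoint-cube and rounding arguments, the simplex volume $(\cdot)^l/l!$) is routine.
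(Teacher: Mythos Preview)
Your proof is correct and follows essentially the same route as the paper's: both arguments decompose the lattice points by support size $l$ (picking up the factors $2^l\binom{d}{l}$), bound the resulting $l$-dimensional counts above and below by the volumes of $P_l\mathcal H_r^l$ and $P_l(\ln r)B_1^l$ respectively, and close the gap via the key inequality $l\le\log_2 r\iff c\ln r-l\ge\ln r$. The only difference is cosmetic: the paper imports the bounds $A(r,l)\le r(\ln r)^{l-1}/(l-1)!$ and the support decomposition from \cite{KSU15}, whereas you derive the (slightly different but equally sufficient) bound $A(r,l)\le r(\ln r)^l/l!$ directly via the substitution $x_j=e^{u_j}$, making your version self-contained.
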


\begin{proof}
The left-hand side follows trivially from $\prod_{j=1}^d (1+|x_j|) \leq \exp(\|x\|_1)$. For the right-hand side
estimate, we first note that
\[
 G(\mathcal H_r^d) = 1 + \sum_{l=1}^{\log_2(r)} 2^l {d \choose l} A(r,l),
\]
where $A(r,l) := \sharp \{k \in \N^l: \prod_{j=1}^d (1+k_j) \leq r\}$, see \cite[Lem. 3.1]{KSU15}. Further, for $2^l
\leq r$, we have $A(r,l) \leq v_l(r)$, where $v_l(r) = \vol(\mathcal H_r^l \cap \{ x \in \R^l: x_j \geq 1\})$, and
$v_l(r) \leq r \frac{(\ln(r))^{l-1}}{(l-1)!}$, see \cite[Lem. 3.2]{KSU15}. Consider now
\begin{align*}
 G(r,l) &:= \{ k \in \N^l: \exp(\|k\|_1) \leq r\},\\
 w_l(r) &:= \vol(\ln(r)B_1^l \cap \{x\in \R^l: x_j \geq 0\}) = \frac{(\ln(r))^l}{l!}.
\end{align*}
For $k \in \Z^l$ let $Q^l_k = k + [0,1]^l$. From the two-sided set inclusion
\[ 
 \{x \in \R^l: x_j \geq 1, \; \exp(\|x\|_1) \leq r \} \subset \bigcup_{k \in G(r,l)} Q_k^l \subset \{ x \in \R^l: x_j
\geq 0, \; \exp(\|x\|_1) \leq r  \}
\]
we obtain, by taking volumes and a change of variables, the two-sided estimate
\[ 
 w_l(r/e^l) \leq \sharp G(r,l) \leq w_l(r).
\]
Now, using $l \leq \ln(r)/\ln(2)$, we observe
\[ 
 A(r,l) \leq v_l(r) = r w_{l-1}(r) \leq r w_l(r) \leq r \sharp G(e^l r, l) \leq r \sharp G(r^c,l).
\]
It remains to note that $G(c\ln(r) B_1^d) = 1+\sum_{l=1}^{\log_2(r)} 2^l {d \choose l} \sharp G(r^c,l)$, which can be
seen easily by adopting the proof of \cite[Lem. 3.1]{KSU15}.
\end{proof}

\begin{remark}
The direct preasymptotic calculations made in \cite[Thm. 4.9]{KSU15} for the Sobolev space of dominating mixed
smoothness can be adopted for the space of Gevrey type $G^{1,s,1}(\T^d)$ using the elements introduced in the proof of
Lemma \ref{res:gevrey_mixed_grid_numbers}. This yields, for $1 \leq n \leq 2^d$, the estimate from above
\[ 
 a_n(\Id: G^{1,s,1}(\T^d) \to L_2(\T^d)) \leq \Big( \frac{e^2}{n}\Big)^{\frac{s}{1+\log_2(d)}}.
\]
Compare with \eqref{eq:gevrey_quasi_polynomial_decay}.
\end{remark}

\begin{remark}\label{rem:gevrey_mixed}
In contrast to the preasymptotic range, the approximation numbers $a_n(\Id: G^{p,s/p,p}(\T^d) \to L_2(\T^d))$ and
$a_n(\Id: H_{\text{mix}}^s(\T^d) \to L_2(\T^d))$ behave asymptotically completely different. On one side, we have the
well-known result
\[
a_n(\Id: H_{\text{mix}}^s(\T^d) \to L_2(\T^d)) \asymp_d n^{-s} (\ln n)^{(d-1)s}
\] 
for the Sobolev space with dominating mixed smoothness, see \cite{KSU15} and the references therein. On the other side,
we learn from Theorem \ref{res:gevrey} (iii) that
\[ 
 q_1^{-s d n^{p/d}} \leq a_n(\Id: G^{p,\beta,p}(\T^d) \to L_2(\T^d)) \leq q_2^{-s d n^{p/d}},
\]
where $q_1=\exp(c_1/p), q_2 = \exp(c_2/p)$.
\end{remark}

\subsection{Preasymptotics for embeddings into $\mathbf{H^s}$}\label{sec:gevrey:energy}

\begin{figure}[t]
\centering
\begin{tikzpicture}
  \tikzset{node distance=3cm, auto}
  \node (H) {$G^{\alpha,\beta,p}(\T^d)$};
  \node (L) [right of =H] {$H^{s,p}(\T^d)$};
  \node (ell) [below of=H] {$\ell_2(\Z^d)$};
  \node (ell2) [right of=ell] {$\ell_2(\Z^d)$};
  \draw[->] (H) to node {$Id$} (L);
  \draw[->] (H) to node[left] {$A^{\mathbf{w}^G_{\alpha,\beta,p}}$} (ell);
  \draw[->] (ell) to node {$D^{\widetilde{\mathbf{w}}}$} (ell2);
  \draw[->] (ell2) to node[right] {$B^{\mathbf{w}_{s,p}}$} (L);
\end{tikzpicture}
\caption{Commutative diagram for the embedding $\Id: G^{\alpha,\beta,p}(\T^d) \to H^{s,p}(\T^d)$.}
\label{fig:commutative_diagram_gevrey}
\end{figure}

In this section, we consider approximation numbers of the embedding
\[ 
 \Id: G^{\alpha,\beta,p}(\T^d) \to H^{s,p}(\T^d),
\]
assuming $s \leq \beta \alpha$. Here, as before, $G^{\alpha,\beta,p}(\T^d)$ is the periodic space of Gevrey type defined
by the weight sequence $\mathbf{w}_{\alpha,\beta,p}^G$, see \eqref{eq:gevrey_weights}, and $H^{s,p}(\T^d)$ is the
isotropic periodic Sobolev space defined by the weight sequence $\mathbf{w}_{s,p}$, see \eqref{def:iso_weights}. 

For $\mathbf{w}$ an arbitrary weight sequence, recall the operators $A^{\mathbf{w}}$, $D^{\mathbf{w}}$, and $F$ defined
in \eqref{eq:seq_op},  \eqref{eq:diag_op} and \eqref{eq:fourier_op} in Section \ref{sec:approximation_numbers}. Further,
let
\begin{align*}
 B^{\mathbf{w}}: \ell_2(\Z^d) \to H^{\mathbf{w}}(\T^d), \; (\xi_k)_{n \in \Z^d} \mapsto (2\pi)^{-d/2} \sum_{k \in \Z^d}
\xi_k/w(k) e^{i k x}.
\end{align*}
We can write the embedding $ \Id: G^{\alpha,\beta,p}(\T^d) \to H^{s,p}(\T^d)$ as $\Id = B^{\mathbf{w}_{s,p}} \circ
D^{\widetilde{\mathbf{w}}} \circ A^{\mathbf{w}_{\alpha,\beta,p}^G}$, where 
$\widetilde{\mathbf{w}} = \mathbf{w}^G_{\alpha,\beta,p}/\mathbf{w}_{s,p}$,
see Figure \ref{fig:commutative_diagram_gevrey} for an illustration. At the same time, we also have $\Id = F \circ
D^{\widetilde{\mathbf{w}}} \circ A^{\widetilde{\mathbf{w}}}$. Hence, recalling the considerations made at the beginning
of Section \ref{sec:approximation_numbers}, it is clear that
\[ 
 a_n(\Id: G^{\alpha,\beta,p}(\T^d) \to H^{s,p}(\T^d)) = a_n( \Id: H^{\widetilde{\mathbf{w}}}(\T^d) \to L_2(\T^d)).
\]
Note that $\widetilde{w}(k) = \widetilde{\varphi}(\|k\|_p)$, where
\begin{align}\label{eq:phi_tilde}
 \widetilde{\varphi}(t) = \exp(\beta t^\alpha)/t^s.
\end{align}
Since we have assumed $s \leq \beta \alpha$, the function $\widetilde{\varphi}$ is monotonically increasing for all
$t\geq 1$. Consequently, we may apply Theorem \ref{res:approx_entropy_mono} and obtain the following worst-case error
estimates.

\begin{theorem}\label{res:gevrey_energy}
Let $\alpha,\beta,s > 0$, such that $s \leq \beta\alpha$, and $0<p\leq \infty$. Consider the approximation numbers
\[ 
 a_n := a_n(\Id: G^{\alpha,\beta,p}(\T^d) \to H^{s,p}(\T^d)).
\]
\begin{enumerate}[label=(\roman*)]
 \item For $1 \leq n \leq d$, we have $a_n \asymp_{\alpha,\beta,s,p} 1$.
 \item For $d \leq n \leq 2^d$, we have
 \[ 
  -\ln(a_n) + s \ln\left(\frac{\log(n)}{\log(1+d/\log(n))}\right)  \asymp_{\alpha,s,p}- 1 + \beta
\left(\frac{\log(n)}{\log(1+d/\log(n))}\right)^{\alpha/p}.
 \]
 \item For $n \geq 2^d$, we have
 \[ 
  -\ln(a_n) +  s\ln(d) +s/d\ln(n) \asymp_{\alpha,s,p} -1 + \beta d^{\alpha/p} n^{\alpha/d}.
 \] 
\end{enumerate} 
\end{theorem}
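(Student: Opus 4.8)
The plan is to derive everything from the identity (established just above the statement)
\[
 a_n \;=\; a_n\big(\Id:H^{\widetilde{\mathbf w}}(\T^d)\to L_2(\T^d)\big),\qquad \widetilde w(k)=\frac{\exp(\beta\|k\|_p^\alpha)}{(1+\|k\|_p^p)^{s/p}},
\]
together with Theorem~\ref{res:approx_entropy_mono} and the known behaviour of the entropy numbers $\varepsilon_n:=\varepsilon_n(\id:\ell_p^d\to\ell_\infty^d)$ from Propositions~\ref{res:entropy_infty} and~\ref{res:entropy_p}. First I would note that $\widetilde w(0)=1$ and, for $k\neq0$, $\widetilde w(k)$ agrees up to a factor $2^{s/p}$ with $\widetilde\varphi(\|k\|_p)$, where $\widetilde\varphi(t)=\exp(\beta t^\alpha)/t^s$ as in~\eqref{eq:phi_tilde}. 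A one-line computation with the logarithmic derivative of $t\mapsto\exp(\beta t^\alpha)/(1+t^p)^{s/p}$ shows that the standing hypothesis $s\le\beta\alpha$ is exactly what makes this map nondecreasing on $[1,\infty)$; since $\|k\|_p\in\{0\}\cup[1,\infty)$ for integer $k$, one may flatten it on $[0,1]$ (adjusting by a harmless factor $O_{s,p}(1)$ should $\widetilde\varphi(1)<1$) to obtain a monotonically increasing $\varphi$ on $\R$ with $\varphi(0)=1$ representing the weight up to constants. Theorem~\ref{res:approx_entropy_mono} then yields, for $n\ge2$,
\[
 c_{s,p}\,\widetilde\varphi\!\left(\tfrac{2}{\varepsilon_n}\right)^{-1}\ \le\ a_n\ \le\ C_{s,p}\,\widetilde\varphi\!\left(\tfrac{1}{4\varepsilon_n}\right)^{-1},
\]
with the convention $\widetilde\varphi\equiv1$ on $[0,1]$.

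Next I would pass to logarithms. Using $\ln\widetilde\varphi(t)=\beta t^\alpha-s\ln t$ for $t\ge1$, the displayed bound becomes
\[
 \beta\,4^{-\alpha}\varepsilon_n^{-\alpha}+s\ln\varepsilon_n-O_{s,p}(1)\ \le\ -\ln a_n\ \le\ \beta\,2^{\alpha}\varepsilon_n^{-\alpha}+s\ln\varepsilon_n+O_{s,p}(1),
\]
valid whenever the arguments $2/\varepsilon_n$ and $1/(4\varepsilon_n)$ both exceed $1$; in the complementary (bounded) range of $n/d$ one instead reads off $a_n\asymp_{\alpha,\beta,s,p}1$ directly from the flat extension together with $a_n\le a_1=\|\Id\|=O_{\alpha,\beta,s,p}(1)$ (the initial error is $d$-independent since $\min_k\widetilde w(k)=\min\{1,e^\beta 2^{-s/p}\}$). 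Adding $-s\ln\varepsilon_n=s\ln(1/\varepsilon_n)$ throughout and recording that $\beta\varepsilon_n^{-\alpha}\ge\beta$ is the dominant term, this rearranges to the clean intermediate statement
\[
 -\ln a_n + s\ln(1/\varepsilon_n)\ \asymp_{\alpha,s,p}\ 1+\beta\,\varepsilon_n^{-\alpha}.
\]
It then remains to insert the three regimes of Propositions~\ref{res:entropy_infty} and~\ref{res:entropy_p}: $\varepsilon_n\asymp1$ for $1\le n\le d$ gives~(i); $\varepsilon_n\asymp\Lambda^{-1/p}$ with $\Lambda=\log n/\log(1+d/\log n)$ for $d\le n\le2^d$ gives $\varepsilon_n^{-\alpha}\asymp\Lambda^{\alpha/p}$ and $\ln(1/\varepsilon_n)=\tfrac1p\ln\Lambda+O(1)$, hence~(ii); and $\varepsilon_n\asymp d^{-1/p}n^{-1/d}$ for $n\ge2^d$ gives $\varepsilon_n^{-\alpha}\asymp d^{\alpha/p}n^{\alpha/d}$ and $\ln(1/\varepsilon_n)=\tfrac1p\ln d+\tfrac1d\ln n+O(1)$, hence~(iii).

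The bulk of this is bookkeeping; the points that actually need care are three. The first is being honest that Theorem~\ref{res:approx_entropy_mono} applies only after repairing $\widetilde\varphi$ near the origin — this is precisely where $s\le\beta\alpha$ is used, and where replacing $t^{-s}$ by $(1+t^p)^{-s/p}$ on $[1,\infty)$ is justified. The second is the boundary behaviour: when $\varepsilon_n$ is close to $1$ the argument $1/(4\varepsilon_n)$ drops below $1$, so $\beta t^\alpha-s\ln t$ must be replaced by the flat extension; this is what produces part~(i), and one must check it does not spoil the low end of part~(ii). The third — which I expect to be the only genuinely delicate point — is that my computation naturally produces $\tfrac sp\ln\Lambda$ (respectively $\tfrac sp\ln d+\tfrac sd\ln n$) where the statement writes $s\ln\Lambda$ (respectively $s\ln d+\tfrac sd\ln n$), and produces bounded additive errors of either sign where the statement writes $-1$. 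Both discrepancies are admissible under $\asymp_{\alpha,s,p}$: the factor $p$ is a fixed parameter, and within each regime $\varepsilon_n^{-\alpha}$ grows strictly faster than $\ln(1/\varepsilon_n)$, so that on the relevant ranges $1+\beta\varepsilon_n^{-\alpha}$, $-1+\beta\varepsilon_n^{-\alpha}$ and $\beta\varepsilon_n^{-\alpha}$ are mutually $\asymp_{\alpha,s,p}$-equivalent. Making this absorption explicit, and matching it against the exact algebraic form of~(i)--(iii), is where the proof has to argue rather than merely compute.
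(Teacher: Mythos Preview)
Your approach is exactly the paper's: reduce to the $L_2$-embedding of $H^{\widetilde{\mathbf w}}$ via the quotient weight, observe that $s\le\beta\alpha$ makes $\widetilde\varphi$ increasing on $[1,\infty)$, apply Theorem~\ref{res:approx_entropy_mono}, and read off the three regimes from Propositions~\ref{res:entropy_infty} and~\ref{res:entropy_p}. The paper's entire proof is the single sentence ``Consequently, we may apply Theorem~\ref{res:approx_entropy_mono}'', so you are in fact filling in details it leaves implicit --- including the harmless discrepancy between $(1+t^p)^{-s/p}$ and $t^{-s}$, and the need to flatten $\widetilde\varphi$ on $[0,1]$.

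One small sharpening of your final paragraph: when you absorb the $(s-s/p)\ln\Lambda$ correction and the $\pm1$ into the dominant term with constants \emph{independent of $\beta$}, you should make explicit that the hypothesis $s\le\beta\alpha$ gives $\beta\ge s/\alpha$, so $\beta\Lambda^{\alpha/p}$ is bounded below by a quantity depending only on $s,\alpha,p$. That is what lets $|s-s/p|\ln\Lambda \le C_{\alpha,p}\Lambda^{\alpha/p} \le (\alpha/s)C_{\alpha,p}\,\beta\Lambda^{\alpha/p}$ go through with the right dependence; your phrase ``the factor $p$ is a fixed parameter'' does not quite capture this, since the issue is additive rather than multiplicative.
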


\noindent In case that $\alpha = p$, $\beta = r/p$, which is particularly interesting in view of a comparison with spaces of dominating mixed smoothness and the discussion in Subsection \ref{sec:intro:enegry}, we can rewrite Theorem \ref{res:gevrey_energy} (ii) as follows.

\begin{corollary}\label{cor:gevrey_energy}
Consider the approximation numbers
\[ 
 a_n := a_n(\Id: G^{p,r/p,p}(\T^d) \to H^{s,p}(\T^d)).
\]
For $1 \leq n \leq 2^d$, we have
\begin{equation}\begin{split} 
  &c_1(p,s) \left(\frac{\log(n)}{\log(1+d/\log(n))}\right)^{s/p} n^{-\frac{c_1(p) r}{p\log(1+d/\log(n))}}\\
  &~~~~~~~~~\leq a_n \leq
  c_2(p,s)\left(\frac{\log(n)}{\log(1+d/\log(n))}\right)^{s/p} n^{-\frac{c_2(p) r}{p\log(1+d/\log(n))}}.
  \end{split}
\end{equation}
\end{corollary}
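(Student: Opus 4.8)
The plan is to obtain the corollary by specialising Theorem~\ref{res:gevrey_energy}(ii) to the parameters $\alpha=p$, $\beta=r/p$ and exponentiating; equivalently, and more transparently, one applies the abstract characterisation of Theorem~\ref{res:approx_entropy_mono} directly to the weight function governing the embedding $\Id: G^{p,r/p,p}(\T^d)\to H^{s,p}(\T^d)$. By the discussion preceding Theorem~\ref{res:gevrey_energy} (see Figure~\ref{fig:commutative_diagram_gevrey}), this embedding has the same approximation numbers as $\Id: H^{\widetilde{\mathbf w}}(\T^d)\to L_2(\T^d)$, where $\widetilde w(k)=\widetilde\varphi(\|k\|_p)$ with $\widetilde\varphi(t)=\exp\!\big((r/p)\,t^{p}\big)\,(1+t^{p})^{-s/p}$; under the hypothesis $s\le r$ this $\widetilde\varphi$ is monotonically increasing on $[0,\infty)$ with $\widetilde\varphi(0)=1$, so Theorem~\ref{res:approx_entropy_mono} gives, for every $n\ge 2$,
\[
 \frac{1}{\widetilde\varphi(2/\varepsilon_n)}\ \le\ a_n\ \le\ \frac{1}{\widetilde\varphi(1/(4\varepsilon_n))},\qquad \varepsilon_n:=\varepsilon_n(\id:\ell_p^d\to\ell_\infty^d).
\]
Since $1/\widetilde\varphi(t)=(1+t^{p})^{s/p}\exp(-(r/p)\,t^{p})$ and $\varepsilon_n\le 1$ for all $n$, both sides are, up to $s$-dependent absolute constants, of the form $\varepsilon_n^{-s}\exp\!\big(-(r/p)\,B\,\varepsilon_n^{-p}\big)$, with $B=2^{p}$ in the lower and $B=4^{-p}$ in the upper bound.

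I would then substitute Proposition~\ref{res:entropy_p}: on the preasymptotic window $d\le n\le 2^d$ it gives $\varepsilon_n\asymp_p\big(\log(1+d/\log n)/\log n\big)^{1/p}$, hence $\varepsilon_n^{-p}\asymp_p R$ and $\varepsilon_n^{-s}=(\varepsilon_n^{-p})^{s/p}\asymp_p R^{s/p}$ with the shorthand $R=R(n,d):=\log n/\log(1+d/\log n)$. The factor $\varepsilon_n^{-s}$ produces exactly the prefactor $R^{s/p}=(\log n/\log(1+d/\log n))^{s/p}$ of the statement. For the exponential I would use that, since $\log=\log_2$ forces $\ln n=(\ln 2)\log n$, one has $\exp(-aR)=n^{-(a/\ln 2)/\log(1+d/\log n)}$ for any $a>0$; applying this with $a=(r/p)\,B'$, where $B'$ merges $B$ with the relevant one-sided equivalence constant from Proposition~\ref{res:entropy_p}, rewrites the exponential as a power $n^{-c(p)\,r/(p\log(1+d/\log n))}$ with $c(p)=B'/\ln 2$. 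Evaluating the upper bound at the argument $1/(4\varepsilon_n)$ and the lower bound at $2/\varepsilon_n$ and collecting constants then gives the claimed two-sided estimate on $d\le n\le 2^d$: here $c_1(p)$ and $c_2(p)$ absorb the constants $2^{p}$ resp.\ $4^{-p}$, the factor $1/\ln 2$, and the one-sided entropy constants, while $c_1(p,s)$ and $c_2(p,s)$ absorb the $s$-th powers of the entropy constants. On the remaining range $1\le n\le d$ one has $\varepsilon_n\asymp_p 1$ and $a_n\asymp_{p,r,s}1$ by Theorem~\ref{res:gevrey_energy}(i), consistent with the displayed formula since there $R\asymp 1$ (up to the boundary $n\asymp d$).

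The whole argument is a bookkeeping exercise on top of Theorems~\ref{res:approx_entropy_mono}, \ref{res:gevrey_energy} and Proposition~\ref{res:entropy_p}, so I expect no conceptual obstacle. The one delicate point is the propagation of constants: unlike in the cases $p=\infty$ or $n\to\infty$, the unspecified equivalence constants of Proposition~\ref{res:entropy_p} do \emph{not} cancel here, because $\varepsilon_n^{-p}$ sits \emph{inside} the exponential, so the entropy constant enters the exponents $c_1(p),c_2(p)$ as well as the prefactor constants $c_1(p,s),c_2(p,s)$. One therefore has to keep the two one-sided inequalities of Proposition~\ref{res:entropy_p} apart throughout rather than collapse them into a single `$\asymp$', and one should state explicitly that the displayed estimate is sharp only in the genuinely preasymptotic window $d\le n\le 2^d$, the range $1\le n\le d$ being subsumed under Theorem~\ref{res:gevrey_energy}(i).
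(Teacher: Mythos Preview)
Your proposal is correct and matches the paper's own proof, which consists of a single line: set $\alpha=p$, $\beta=r/p$ in Theorem~\ref{res:gevrey_energy}(ii). You give considerably more detail than the paper does---in particular you unwind the argument back through Theorem~\ref{res:approx_entropy_mono} and Proposition~\ref{res:entropy_p} and track how the entropy constants migrate into both the exponent and the prefactor---but this is exactly the mechanism behind Theorem~\ref{res:gevrey_energy} itself, so the route is the same.
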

\begin{proof}
Let $\alpha = p$, $\beta=r/p$. Then the asserted follows by Theorem \ref{res:gevrey_energy} (ii).
\end{proof}

\noindent As a last point in this section we provide the following limit result.
\begin{theorem}\label{limit:gevrey_energy}
Let $0 < p \leq \infty$, $0< \alpha < \min\{1,p\}$, and $\beta > 0$. For
\[ 
 a_n := a_n(\Id: G^{\alpha,\beta,p}(\T^d) \to H^{s,p}(\T^d)),
\]
we have
\[ 
 \lim_{n \to \infty} a_n \cdot \frac{\exp(\beta \vol(B_p^d)^{-\alpha/d} n^{\alpha/d}) \vol(B_p^d)^{s/d}}{n^{s/d}} = 1.
\]
\end{theorem}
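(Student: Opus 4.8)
The plan is to follow the proof of Theorem~\ref{limit:gevrey} almost verbatim; the only genuinely new point is that the weight now carries an extra polynomial factor $t^{-s}$, which must be carried along but turns out to contribute only a factor tending to $1$. As explained in the discussion preceding Theorem~\ref{res:gevrey_energy}, we have $a_n = a_n(\Id: H^{\widetilde{\mathbf w}}(\T^d) \to L_2(\T^d))$, and the latter equals the non-increasing rearrangement $\gamma_n$ of $(1/\widetilde w(k))_{k\in\Z^d}$, where $\widetilde w(k) = \widetilde\varphi(\|k\|_p)$ with $\widetilde\varphi(t) = \exp(\beta t^\alpha)\,t^{-s}$ (using, as in Theorem~\ref{res:approx_entropy_mono}, the $\max\{1,\cdot\}$-regularization at the origin, which is irrelevant for the limit). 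Because $s\leq\beta\alpha$, the function $\widetilde\varphi$ is monotonically increasing on $[1,\infty)$, so by \eqref{eq:rearr_mon} we have $\gamma_n = 1/\widetilde\varphi(1/\sigma_n)$ for $n>1$, where $(\sigma_n)$ is the rearrangement of $(1/\max\{1,\|k\|_p\})_k$. Thus everything is reduced to the size of $\sigma_n$, exactly as in Section~\ref{sec:entropy}.

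Next I would re-use the \emph{refined} bounds on $\sigma_n$ obtained inside the proof of Theorem~\ref{res:general_limit}: setting $y_n := 1/(2\varepsilon_n)$, $\tilde p := \min\{1,p\}$, and letting $h_1,h_2$ be the auxiliary functions there (with $\tilde p$ in place of $p$ and $\lambda_{\|\cdot\|_p}(d)=d^{1/p}$), one has, for all sufficiently large $n$, $1/h_2(y_n) \leq \sigma_n \leq 1/h_1(y_n)$. These bounds rest only on the grid/covering estimates of Lemmas~\ref{res:grid_numbers} and \ref{res:grid_numbers_p_balls}, not on the particular weight function, so they are available unchanged. Inserting them into $\gamma_n = 1/\widetilde\varphi(1/\sigma_n)$ and invoking monotonicity of $\widetilde\varphi$ on $[1,\infty)$ (note $1/\sigma_n \geq 1$ and $y_n\to\infty$) reproduces the two-sided estimate \eqref{eq:general_limit_two_sided_estimate} with $\widetilde\varphi$ in the role of $\varphi$, namely
\[
\frac{\widetilde\varphi(y_n)}{\widetilde\varphi(h_2(y_n))} \leq \gamma_n\,\widetilde\varphi(y_n) \leq \frac{\widetilde\varphi(y_n)}{\widetilde\varphi(h_1(y_n))}.
\]

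Then I would feed in Lemma~\ref{res:entropy_numbers_large_n}, which for large $n$ yields $\check h(x_n) \leq y_n \leq x_n$ with $x_n := (n/\vol(B_p^d))^{1/d}$ and $\check h(t) := (t^{\tilde p} - 2^{1-\tilde p}d^{\tilde p/p})^{1/\tilde p}$, and unfold $\widetilde\varphi(t) = \exp(\beta t^\alpha)t^{-s}$. Writing $\gamma_n\widetilde\varphi(x_n) = \big(\widetilde\varphi(x_n)/\widetilde\varphi(y_n)\big)\cdot\gamma_n\widetilde\varphi(y_n)$, the exponential contributions telescope to terms of the form $\exp(\beta(x_n^\alpha - h_i(x_n)^\alpha))$ as in Theorem~\ref{limit:gevrey}, while the polynomial contributions telescope to ratios $(h_i(y_n)/x_n)^{s}$; since $y_n = x_n(1+o(1))$ and $h_i(t)=t(1+o(1))$ for fixed $d$, and since these ratios always point in the favourable direction ($\leq 1$ on the upper side because $h_1(y_n)\leq x_n$, and $\geq(\check h(x_n)/x_n)^s$ on the lower side), one arrives at the sandwich
\[
\exp\!\big(\beta(x_n^\alpha - h_2(x_n)^\alpha)\big)\Big(\frac{\check h(x_n)}{x_n}\Big)^{s} \leq a_n\cdot\frac{\exp(\beta\vol(B_p^d)^{-\alpha/d}n^{\alpha/d})\,\vol(B_p^d)^{s/d}}{n^{s/d}} \leq \exp\!\big(\beta(x_n^\alpha - h_3(x_n)^\alpha)\big),
\]
where $h_3(t) := \check h(t)-1$ and we used $\widetilde\varphi(x_n) = \exp(\beta\vol(B_p^d)^{-\alpha/d}n^{\alpha/d})\,\vol(B_p^d)^{s/d}n^{-s/d}$.

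Finally I would quote the elementary estimates from the end of the proof of Theorem~\ref{limit:gevrey}: a double application of the mean value theorem gives $x_n^\alpha - h_2(x_n)^\alpha \to 0$ and $x_n^\alpha - h_3(x_n)^\alpha \to 0$ as $n\to\infty$ precisely because $\alpha < \tilde p = \min\{1,p\}$, while $\check h(x_n)/x_n \to 1$ for fixed $d$; hence both sides of the sandwich tend to $1$ and the claim follows. I expect the only delicate point to be bookkeeping: one must keep the polynomial factor $t^{-s}$ on the correct side of each inequality throughout, the moral being that the exponentially growing term $n^{\alpha/d}$ is controlled sharply enough (thanks to Lemma~\ref{res:entropy_numbers_large_n}) that the accompanying polynomial perturbation cannot disturb it. For $p=\infty$ one argues identically with $\tilde p = 1$, and the case $s=0$ is just Theorem~\ref{limit:gevrey}.
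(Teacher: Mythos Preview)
Your proposal is correct and follows essentially the same route as the paper's own proof: both reduce to the two-sided estimate \eqref{eq:general_limit_two_sided_estimate} with $\widetilde\varphi$ in place of $\varphi$, then insert the refined entropy bounds of Lemma~\ref{res:entropy_numbers_large_n} to obtain the same sandwich (the paper's lower factor $(1-2^{1-p}d^{1/p}/x_n)^{s/p}$ is exactly your $(\check h(x_n)/x_n)^s$), and finish with the mean-value-theorem argument from Theorem~\ref{limit:gevrey}. The only cosmetic difference is that the paper first crudely replaces $h_i(y_n)^s$ by $y_n^s$ before inserting the entropy bounds, whereas you keep the ratios $(h_i(y_n)/x_n)^s$ and bound them directly; both bookkeeping choices lead to the identical final inequalities.
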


\begin{proof}
Let $\varepsilon_n := \varepsilon_n(\id: \ell_p^d \to \ell_\infty^d)$. The general estimate \eqref{eq:general_limit_two_sided_estimate} now takes the form
\[ 
 \frac{1}{\widetilde{\varphi}(h_2(1/(2\varepsilon_n)))} \leq a_n \leq \frac{1}{\widetilde{\varphi}(h_1(1/(2\varepsilon_n)))},
\]
where $\widetilde{\varphi}$ is defined in \eqref{eq:phi_tilde} and $h_1$, $h_2$ are defined in the proof of Theorem \ref{res:general_limit}. This can be further estimated to
\[ 
 \frac{1}{(2\varepsilon_n)^s\exp(\beta h_2(1/(2\varepsilon_n))^\alpha)} \leq a_n \leq \frac{1}{(2\varepsilon_n)^s\exp(\beta h_1(1/(2\varepsilon_n))^\alpha)}.
\]
Writing $x_n = n^{1/d} \vol(B_p^d)^{-1/d}$, it is easy to see that plugging in the estimates of Lemma \ref{res:entropy_numbers_large_n} leads to
\[ 
 \left(1- \frac{2^{1-p}d^{1/p}}{x_n}\right)^{s/p} \exp(\beta(x_n^\alpha - h_2(x_n)^\alpha)\leq a_n \frac{\exp(\beta \vol(B_p^d)^{-\alpha/d} n^{\alpha/d}) \vol(B_p^d)^{s/d}}{n^{s/d}}
\]
and
\[ 
 a_n \frac{\exp(\beta \vol(B_p^d)^{-\alpha/d} n^{\alpha/d}) \vol(B_p^d)^{s/d}}{n^{s/d}} \leq \exp(\beta(x_n^\alpha - h_3(x_n)^\alpha)),
\]
where $h_3$ is defined in the proof of Theorem \ref{limit:gevrey}. It remains to apply the arguments which we already used in the proof of Theorem \ref{limit:gevrey}.
\end{proof}

\section{Tractability analysis}\label{sec:tractability}

We conclude this paper with a tractability discussion. The tractability results follow more or less immediately from the
worst-case error bounds which we have derived in the preceding sections.

\begin{theorem}\label{res:tractability_iso}
Let $s > 0$ and $0<p\leq \infty$. Then the approximation problem
\[ 
 \Id: H^{s,p}(\T^d) \to L_2(\T^d)
\]
\begin{enumerate}[label=(\roman*)]
 \item suffers from the curse of dimensionality iff $p = \infty$ (for all $s > 0$),
 \item does not suffer from the curse of dimensionality iff $p < \infty$ and $s > 0$,
 \item is intractable iff $p < \infty$ and $s \leq p$,
 \item is weakly tractable iff $p < \infty$ and $s > p$.
\end{enumerate}
\end{theorem}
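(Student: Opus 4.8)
The plan is to read off everything from the sharp two-sided bounds on $a_n(\Id: H^{s,p}(\T^d) \to L_2(\T^d))$ established in Theorem \ref{res:approx_numbers}, converting the decay rates into growth rates of the information complexity $n(\varepsilon,d)$. I would organize the argument around the three regimes appearing in Theorem \ref{res:approx_numbers}, but the decisive information for tractability lives in the preasymptotic range $d \le n \le 2^d$, where $a_n \asymp_{s,p} \bigl[\log(1+d/\log n)/\log n\bigr]^{s/p}$, and in the threshold behavior at $n \asymp 2^d$.

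First I would treat $p = \infty$. Here Theorem \ref{res:approx_numbers_p_infty} gives $a_n = 1$ for all $n \le 2^d$, so for any fixed $\varepsilon < 1$ we need $n(\varepsilon,d) \ge 2^d$; this is exactly the curse of dimensionality, and it also shows the problem is intractable, settling the ``only if'' direction of (i) and one direction of (ii). Conversely, for $p < \infty$ I would show the curse fails: from the middle regime of Theorem \ref{res:approx_numbers}, to reach error $\varepsilon$ it suffices (up to the $s,p$-constants) to have $\bigl[\log(1+d/\log n)/\log n\bigr]^{s/p} \le \varepsilon$, i.e. $\log n \gtrsim \varepsilon^{-p/s}\log(1+d)$ roughly speaking, which gives a bound on $n(\varepsilon,d)$ that is subexponential in $d$ (in fact quasi-polynomial in $d$ for fixed $\varepsilon$). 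I would extract a clean upper bound of the shape $n(\varepsilon,d) \le \exp\!\bigl(C_{s,p}\,\varepsilon^{-p/s}\log(1+d)\bigr)$ for $\varepsilon$ below a threshold, plus handle the tail regime $n \ge 2^d$ separately using the $d^{-s/p}n^{-s/d}$ asymptotics (which only helps). This kills the curse for all $s>0$ when $p<\infty$, completing (i) and (ii).

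For (iii) and (iv) I would analyze weak tractability via $\lim_{1/\varepsilon+d\to\infty}\log n(\varepsilon,d)/(1/\varepsilon+d)=0$. The upper bound $\log n(\varepsilon,d) \lesssim_{s,p} \varepsilon^{-p/s}\log(1+d)$ shows that the limit is $0$ \emph{as long as} $\varepsilon^{-p/s}$ grows slower than linearly against the $1/\varepsilon + d$ denominator — this is automatic precisely when $p/s < 1$, i.e. $s > p$, since then $\varepsilon^{-p/s}\log(1+d) = o(1/\varepsilon + d)$; this proves weak tractability when $p<\infty$ and $s>p$, hence (iv) in one direction and (iii) in one direction. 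For the converse, when $s \le p$ I would use the \emph{lower} bound in the middle regime: choosing $n$ slightly below $2^d$, say $n = 2^{d}$, gives $a_n \gtrsim_{s,p} (\log(1+d/d)/d)^{s/p} = (c/d)^{s/p}$, wait — more carefully, I would pick a value of $n$ that makes $a_n \asymp \varepsilon$ and track how $\log n$ compares to $1/\varepsilon$; the lower bound forces $\log n(\varepsilon,d) \gtrsim \varepsilon^{-p/s}$ (times logarithmic factors in $d$) for a suitable regime of $\varepsilon,d$ tending jointly to infinity, and when $s \le p$ we have $p/s \ge 1$, so $\varepsilon^{-p/s} \gtrsim \varepsilon^{-1}$, which prevents the weak-tractability limit from being $0$ along the path $d$ fixed large, $\varepsilon \to 0$ (or a diagonal path). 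This gives intractability for $s \le p$, $p < \infty$, finishing (iii) and (iv).

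The main obstacle I anticipate is the converse direction of weak tractability, (iii): one must exhibit an explicit sequence $(\varepsilon_j, d_j)$ with $1/\varepsilon_j + d_j \to \infty$ along which $\log n(\varepsilon_j,d_j)/(1/\varepsilon_j+d_j)$ stays bounded away from $0$, and this requires using the lower bound of Theorem \ref{res:approx_numbers} in the correct preasymptotic window — one has to be careful that the candidate $n$ genuinely lies in the range $d \le n \le 2^d$ where that lower bound is valid, and that the $s,p$-dependent equivalence constants do not interfere with the asymptotic statement. The regimes $1 \le n \le d$ and $n \ge 2^d$ are, by contrast, routine: the former contributes only bounded $n$ for $\varepsilon < 1$, and the latter is the benign polynomial-asymptotic tail.
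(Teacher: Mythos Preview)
Your approach is essentially the same as the paper's: both read off the tractability classification directly from the three-regime bounds of Theorem~\ref{res:approx_numbers}, after converting them into bounds on the information complexity $n(\varepsilon,d)$. The paper packages this conversion into an auxiliary lemma (Lemma~\ref{res:icompl_iso}) and then argues (i)--(iv) exactly as you sketch, including the unified upper bound of the form $\log n(\varepsilon,d) \lesssim_{s,p} \log(1/\varepsilon + d)\,(1/\varepsilon+d)^{p/s}$ for (iv).

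There is one genuine misstep you should correct. In (iii) you suggest the path ``$d$ fixed large, $\varepsilon \to 0$''. That path does \emph{not} witness intractability: for fixed $d$ and $\varepsilon$ small enough you are forced into the asymptotic regime $n \ge 2^d$, where $a_n \asymp d^{-s/p} n^{-s/d}$, giving only $\log n(\varepsilon,d) \asymp (d/s)\log(1/\varepsilon)$, and then $\log n(\varepsilon,d)/(1/\varepsilon + d) \to 0$. The lower bound $\log n(\varepsilon,d) \gtrsim \varepsilon^{-p/s}$ that you invoke is only valid in the preasymptotic window, which for fixed $d$ is exhausted as $\varepsilon \to 0$. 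You must instead take a genuine diagonal with $d_i \to \infty$ and $\varepsilon_i$ kept inside that window, specifically $\varepsilon_i \asymp d_i^{-s/p}$; then $1/\varepsilon_i + d_i \asymp (1/\varepsilon_i)^{p/s}$ (since $s\le p$) and the preasymptotic lower bound gives $\log n(\varepsilon_i,d_i) \gtrsim (1/\varepsilon_i)^{p/s}$, so the ratio stays bounded away from zero. This is exactly the path the paper uses, and you do mention ``a diagonal path'' parenthetically and flag the window constraint in your final paragraph --- just be aware that the diagonal is not optional but required.
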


\begin{theorem}\label{res:tractabibility_gevrey}
Let $\alpha, \beta > 0$ and $0<p\leq \infty$. Then the approximation problem
\[ 
 \Id: G^{\alpha,\beta,p}(\T^d) \to L_2(\T^d)
\]
is quasi-polynomial tractable if and only if $\alpha \geq p$.
\end{theorem}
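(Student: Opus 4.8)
The plan is to reduce the tractability question to the behavior of the approximation numbers $a_n := a_n(\Id: G^{\alpha,\beta,p}(\T^d) \to L_2(\T^d))$ described in Theorem \ref{res:gevrey}, and then to translate this behavior into bounds on the information complexity $n(\varepsilon,d)$. Since the initial error is $1$, quasi-polynomial tractability is equivalent to the existence of constants $C,t>0$ with $n(\varepsilon,d) \le C\exp\bigl(t(1+\ln(1/\varepsilon))(1+\ln d)\bigr)$ for all $0<\varepsilon<1$ and all $d\in\N$. Equivalently, writing $m := n(\varepsilon,d)$, one needs $\ln m \lesssim (1+\ln(1/\varepsilon))(1+\ln d)$. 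The heart of the matter is therefore to understand, for a prescribed accuracy $\varepsilon$, how large $n$ must be to guarantee $a_n \le \varepsilon$, and to extract from Theorem \ref{res:gevrey} (ii) the precise interplay of $n$ and $d$ in the preasymptotic range $d \le n \le 2^d$, which is exactly the range governing tractability.

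First I would prove the \emph{sufficiency} part: assume $\alpha \ge p$. By Theorem \ref{res:gevrey} (ii), for $d \le n \le 2^d$ we have $-\ln a_n \asymp_{\alpha,p} \beta\bigl[\log(n)/\log(1+d/\log n)\bigr]^{\alpha/p}$. When $\alpha \ge p$ the exponent $\alpha/p \ge 1$, so $-\ln a_n \gtrsim \beta\bigl[\log(n)/\log(1+d/\log n)\bigr] \gtrsim \beta \log(n)/\log(1+d)$, using $\log n \le d$ in the preasymptotic window. Hence $a_n \le \varepsilon$ is forced as soon as $\beta\log(n)/\log(1+d) \gtrsim \ln(1/\varepsilon)$, i.e. as soon as $\log n \gtrsim \beta^{-1}\log(1+d)\ln(1/\varepsilon)$, which gives $n(\varepsilon,d) \lesssim \exp\bigl(c\,\beta^{-1}\log(1+d)\ln(1/\varepsilon)\bigr)$ — precisely the quasi-polynomial bound. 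One must additionally check the other two ranges: for $1 \le n \le d$ the approximation numbers are $\asymp 1$, which only contributes an $O(d)$ term to $n(\varepsilon,d)$ (absorbed into the quasi-polynomial bound), and for $n \ge 2^d$ the bound $-\ln a_n \asymp \beta d^{\alpha/p} n^{\alpha/d}$ decays even faster, so it poses no obstruction. Combining these three regimes yields the quasi-polynomial estimate for all $n$, hence quasi-polynomial tractability.

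Next I would prove \emph{necessity}: assume $\alpha < p$ and show the problem is not quasi-polynomially tractable. Now $\alpha/p < 1$, so from Theorem \ref{res:gevrey} (ii), $-\ln a_n \asymp_{\alpha,p} \beta\bigl[\log(n)/\log(1+d/\log n)\bigr]^{\alpha/p}$. Choosing $n = 2^{\lceil \log^2 d\rceil}$ or, more simply, tracking the worst case along $n \approx 2^{c d}$ for a small constant $c$, one sees $\log n \approx d$ while $\log(1+d/\log n) \approx 1$, so $-\ln a_n \approx \beta (\log n)^{\alpha/p} \approx \beta d^{\alpha/p}$. This means that for a fixed target accuracy $\varepsilon_0$, one needs roughly $-\ln a_n \gtrsim \ln(1/\varepsilon_0)$, forcing $\bigl[\log(n)/\log(1+d/\log n)\bigr]^{\alpha/p} \gtrsim \ln(1/\varepsilon_0)/\beta$; since the base is at most $\approx d$ and the exponent $\alpha/p$ is strictly below $1$, one cannot achieve $a_n \le \varepsilon_0$ with $\log n$ growing only logarithmically in $d$ — in fact $\log n$ must grow at least polynomially in $d$ (like $d^{1-\alpha/p}$ or worse, after optimizing the relation between $\log n$ and $\log(1+d/\log n)$). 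This polynomial-in-$d$ growth of $\log n(\varepsilon_0,d)$ contradicts the quasi-polynomial requirement $\log n(\varepsilon_0,d) \lesssim (1+\ln(1/\varepsilon_0))(1+\ln d)$. The main obstacle here is carrying out the optimization over the implicit relation $N := \log n$, $L := \log(1+d/N)$ carefully: one has $N \approx d/2^L$, so $N/L \gtrsim \ln(1/\varepsilon_0)^{p/\alpha}$ becomes $d/(L\,2^L) \gtrsim \text{const}$, and one must verify that the minimal such $N$ (equivalently $n$) indeed grows faster than any power of $\ln d$. I expect this delicate two-parameter estimate — showing that the preasymptotic decay is genuinely too slow when $\alpha<p$ — to be the technically most demanding step; the sufficiency direction and the boundary/outer ranges are comparatively routine given Theorem \ref{res:gevrey}.
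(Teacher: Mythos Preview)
Your sufficiency argument is sound and parallels the paper's, which instead passes through a change-of-variable lemma (Lemma~\ref{res:icompl_charac}) reducing the Gevrey information complexity to that of the isotropic Sobolev problem and then invokes the ready-made Sobolev bounds of Lemma~\ref{res:icompl_iso}. The only loose end in your version is the range $n\ge 2^d$, which does require a short verification rather than just ``poses no obstruction''.

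Your necessity argument, however, contains a genuine error. You fix $\varepsilon_0$ and claim that the constraint $\log(n)/\log(1+d/\log n)\gtrsim K:=[\ln(1/\varepsilon_0)/\beta]^{p/\alpha}$ forces $\log n$ to grow polynomially in $d$. It does not: taking $\log n \approx K\log d$ already satisfies the constraint, since then $\log(1+d/\log n)\approx\log d$ and the ratio is $\approx K$. In fact your own substitution $N\approx d/2^L$, $N/L\gtrsim K$ leads to $L\,2^L\lesssim d/K$, hence $L\approx \log d$ and $N\approx K\log d$ --- so $n(\varepsilon_0,d)\asymp d^{K}$, which is perfectly compatible with quasi-polynomial tractability for any \emph{fixed} $\varepsilon_0$. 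The failure of quasi-polynomial tractability when $\alpha<p$ is not visible at fixed $\varepsilon$: the point is that the exponent $K=K(\varepsilon)\asymp[\ln(1/\varepsilon)]^{p/\alpha}$ grows super-linearly in $\ln(1/\varepsilon)$. One must take a diagonal sequence, for instance $\ln(1/\varepsilon_d)\asymp d^{\alpha/p}$, along which $\log n(\varepsilon_d,d)\gtrsim d$ while $(1+\ln(1/\varepsilon_d))(1+\ln d)\asymp d^{\alpha/p}\ln d$; the ratio $d^{1-\alpha/p}/\ln d\to\infty$ then yields the contradiction. The paper obtains this cleanly via Lemma~\ref{res:icompl_charac} together with the lower bound in Lemma~\ref{res:icompl_iso}~(ii), which directly delivers $\log n(\varepsilon,d)\gtrsim [\ln(1/\varepsilon)]^{p/\alpha}$ over a range of $\varepsilon$ wide enough (up to $\ln(1/\varepsilon)\asymp d^{\alpha/p}$) to run this diagonal argument.
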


\noindent Before we turn to the proofs of Theorems \ref{res:tractability_iso}, \ref{res:tractabibility_gevrey}, let us
stress at this point that for the situations discussed here, the decay of approximation numbers in the preasymptotic
range determines the tractability. This is a particularly interesting observation regarding the isotropic Sobolev space.
As we have already pointed out in Section \ref{sec:intro}, the asymptotic decay
\[ 
 a_n(\Id: H^{s,p}(\T^d) \to L_2(\T^d)) \asymp_{s,p,d} n^{-s/d}
\]  
is often considered a typical indicator for the curse of dimensionality. However, as Theorem \ref{res:tractability_iso}
shows, the approximation problem suffers only from the curse of dimensionality in the strict sense of information-based
complexity when we equip the isotropic Sobolev space with the norm $\|\cdot \mid H^{s,\infty}(\T^d)\|$. Otherwise, the
approximation problem is weakly tractable, despite the bad asymptotic decay $n^{-s/d}$. For $p=1,2,2s$ this has already
been observed in \cite{KSU13}, see Remark \ref{rem:KSU13}. Concerning spaces of Gevrey type, it is no surprise in light
of Section \ref{sec:preasymptotic_similarity} that we obtain a similar tractability as has been observed for Sobolev
spaces with dominating mixed regularity in \cite{KSU15}. For some further remarks on the tractability of Gevrey
embeddings, see Remark \ref{rem:Oesis}.

For the proof of Theorem \ref{res:tractability_iso} we have to translate the bounds of Theorem \ref{res:approx_numbers}
into bounds for the information complexity. These bounds are given in Lemma \ref{res:icompl_iso}. We omit the proof,
which is technical and lengthy but requires only standard arguments.

\begin{lemma}\label{res:icompl_iso}
For $s > 0 $ and $0 < p < \infty$, consider the information complexity
\[
 n(\varepsilon,d) = \min\{n \in \N: a_n(\Id: H^{s,p}(\T^d) \to L_2(\T^d)) \leq \varepsilon\}.
\]
\begin{enumerate}[label=(\roman*)]
 \item From above, we have the bounds
 \begin{align*}
 \log n(\varepsilon,d) \lesssim_{s,p}
 \begin{cases}
 \log (d) &: \varepsilon^U_1 \leq \varepsilon \leq 1\\
 \log (d) \; (1/\varepsilon)^{p/s} &: \varepsilon_2^U \leq \varepsilon \leq \varepsilon_1^U\\
 \log(1/\varepsilon) \; (1/\varepsilon)^{p/s} &: \varepsilon_3^U(\gamma) \leq \varepsilon \leq \varepsilon_2^U\\
 \log(1/\varepsilon) \; (1/\varepsilon)^{\frac{p\gamma}{s(p+\gamma)}} &: \varepsilon \leq \varepsilon_3^U(\gamma)
 \end{cases}
 \end{align*}
 where $\gamma \geq 0$ and
 \begin{align*}
  \varepsilon_1^U := C_{s,p} \left[ \frac{\log(1+d/\log d)}{\log d}\right]^{s/p}, \quad
  \varepsilon_2^U := C_{s,p} d^{-s/p}, \quad
  \varepsilon_3^U := C_{s,p} 2^{-s} d^{-s(1/p+1/\gamma)}, 
 \end{align*}
 The constant $C_{s,p}$ is the same as in the upper bound of Theorem \ref{res:approx_numbers}.
 \item From below, we have the bound
 \[ 
  \log n(\varepsilon,d) \gtrsim_{s,p} (1/\varepsilon)^{p/s} \quad \text{ for } \; \varepsilon_2^L \leq \varepsilon \leq
\varepsilon_1^L,
 \]
 where
 \begin{align*}
  \varepsilon_1^L := c_{s,p} \left[ \frac{\log(1+d/\log(d))}{\log(d)}\right]^{s/p}, \quad
  \varepsilon_2^L := c_{s,p}  2^{-s} (1/d)^{s/p}.
 \end{align*}
 The constant $c_{s,p}$ is identical to the one in the lower bound of Theorem \ref{res:approx_numbers}.
\end{enumerate}
\end{lemma}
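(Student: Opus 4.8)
The plan is to obtain $n(\varepsilon,d)$ by approximately inverting the three-branch formula of Theorem~\ref{res:approx_numbers}. Denote by $g(n,d)$ the right-hand side of that theorem, so that $c_{s,p}\,g(n,d)\le a_n(\Id:H^{s,p}(\T^d)\to L_2(\T^d))\le C_{s,p}\,g(n,d)$ with the very constants $c_{s,p},C_{s,p}$ named in the lemma. Since approximation numbers are non-increasing and tend to $0$, the set $\{n:a_n\le\varepsilon\}$ is a non-empty up-set, so $n(\varepsilon,d)$ is well defined and: (a) for each range of $\varepsilon$ in part (i) it is enough to produce one explicit $n^*$ with $C_{s,p}\,g(n^*,d)\le\varepsilon$, which forces $n(\varepsilon,d)\le n^*$ and hence the stated bound on $\log n(\varepsilon,d)$; (b) for part (ii) it is enough to show $c_{s,p}\,g(n,d)>\varepsilon$ for every $n$ below the claimed threshold, which forces $n(\varepsilon,d)$ above it. In both directions the task is thus reduced to estimating the inverse of $g(\cdot,d)$ branch by branch.

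The two outer branches are routine. On $1\le n\le d$ one has $g\equiv1$, so once $\varepsilon$ drops below the flat-regime constant no $n\le d$ can work; in particular for $\varepsilon_1^U\le\varepsilon\le1$ one simply takes $n^*\asymp d$ and reads off $\log n(\varepsilon,d)\lesssim\log d$. On $n\ge2^d$ we have $g(n,d)=d^{-s/p}n^{-s/d}$, which inverts explicitly: $C_{s,p}\,g(n,d)\le\varepsilon$ as soon as $n\ge\bigl(C_{s,p}\,d^{-s/p}/\varepsilon\bigr)^{d/s}$, whence $\log n(\varepsilon,d)\le\frac{d}{s}\bigl(\log(1/\varepsilon)+\log C_{s,p}-\tfrac{s}{p}\log d\bigr)\lesssim_{s,p} d\,\log(1/\varepsilon)$, using $-\tfrac{s}{p}\log d\le0$ and the smallness of $\varepsilon$ on these ranges to absorb $\log C_{s,p}$. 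It remains to trade the prefactor $d$ against a power of $1/\varepsilon$: on $\varepsilon\le\varepsilon_2^U$ the definition of $\varepsilon_2^U$ gives exactly $d\lesssim_{s,p}(1/\varepsilon)^{p/s}$, producing the third bound; on the deeper range $\varepsilon\le\varepsilon_3^U(\gamma)$ the definition of $\varepsilon_3^U(\gamma)$ gives $d\lesssim_{s,p}(1/\varepsilon)^{p\gamma/(s(p+\gamma))}$, producing the fourth. The auxiliary parameter $\gamma$ is nothing more than the knob controlling how aggressively the dimension is traded for $1/\varepsilon$; a smaller $\gamma$ lowers the admissible threshold $\varepsilon_3^U(\gamma)$ but pushes the exponent towards~$0$.

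The middle branch $d\le n\le2^d$ is where the actual work sits. There $g(n,d)=\varepsilon_n(\id:\ell_p^d\to\ell_\infty^d)^s$ by Proposition~\ref{res:entropy_p}, so $C_{s,p}\,g(n,d)\le\varepsilon$ is the transcendental inequality $\log\!\bigl(1+d/\log n\bigr)\le\delta\,\log n$ with $\delta:=(\varepsilon/C_{s,p})^{p/s}$. One verifies that $n^*$ with $\log n^*\asymp\delta^{-1}\log(1+d\delta)$ works: when $d\delta\ge1$ (which is the regime $\varepsilon\gtrsim\varepsilon_2^U$) one gets $d/\log n^*\le d\delta$, hence $\log(1+d/\log n^*)\le\log(1+d\delta)\le\delta\log n^*$, provided also $\log n^*\le d$ so that this branch is the relevant one (otherwise the outer branch already applies and is consistent). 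Thus $\log n(\varepsilon,d)\lesssim_{s,p}\varepsilon^{-p/s}\log(1+d\,\varepsilon^{p/s})$, and splitting $\log(1+d\,\varepsilon^{p/s})\le\log d+\log(1+\varepsilon^{p/s})\lesssim_{s,p}\log d+\log(1/\varepsilon)$ according to which term dominates yields the second bound $\log(d)\,(1/\varepsilon)^{p/s}$ on $\varepsilon_2^U\le\varepsilon\le\varepsilon_1^U$ and is consistent with the third on the overlap. For the lower bound in part (ii), observe that on $d\le n\le2^d$ one has $\log(1+d/\log n)\ge\log2=1$, so $a_n\ge c_{s,p}(\log n)^{-s/p}$, which is $>\varepsilon$ whenever $\log n<(c_{s,p}/\varepsilon)^{p/s}$; hence $\log n(\varepsilon,d)\ge(c_{s,p}/\varepsilon)^{p/s}\gtrsim_{s,p}(1/\varepsilon)^{p/s}$. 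The restrictions $\varepsilon\le\varepsilon_1^L$ and $\varepsilon\ge\varepsilon_2^L$ serve only to keep the relevant $n$ inside $[d,2^d]$: the former puts $\varepsilon$ below the flat-regime value of $a_n$, the latter guarantees $(c_{s,p}/\varepsilon)^{p/s}\lesssim d$.

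The one genuine difficulty is bookkeeping, not mathematics: one must fix the thresholds $\varepsilon_1^U,\varepsilon_2^U,\varepsilon_3^U(\gamma),\varepsilon_1^L,\varepsilon_2^L$ so that every claimed monomial bound on $\log n(\varepsilon,d)$ is honestly a bound on its whole stated range — these monomials over-estimate near the branch transitions, and one has to confirm they never under-estimate (resp.\ over-estimate, for the lower bound) — and one must carry the free parameter $\gamma$ through $\varepsilon_3^U(\gamma)$ consistently. None of this is conceptually deep, which is precisely why the statement can be used with the proof left as a routine exercise.
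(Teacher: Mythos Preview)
Your approach is correct and is precisely the ``standard arguments'' the paper alludes to: the paper omits the proof entirely, stating only that it is ``technical and lengthy but requires only standard arguments,'' and your branch-by-branch inversion of Theorem~\ref{res:approx_numbers} is exactly what is meant. The only blemishes are cosmetic --- e.g.\ the splitting $\log(1+d\varepsilon^{p/s})\le\log d+\log(1+\varepsilon^{p/s})$ is not literally true, but on the range $d\varepsilon^{p/s}\ge 1$ one simply has $\log(1+d\varepsilon^{p/s})\le 1+\log d$ directly, which is what you need anyway.
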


\begin{proof}[Proof of Theorem \ref{res:tractability_iso}]
~ 
\begin{enumerate}[label=(\roman*)]
 \item For $n \leq 2^d$, Theorem \ref{res:approx_numbers_p_infty} states that   $a_n = 1$. Hence, we have
$n(\varepsilon,d) \geq 2^d$ for all $\varepsilon < 1$ and the problem suffers from the curse of dimensionality.
 \item We show that for there is an $\varepsilon > 0$ such that $n(\varepsilon,d)$ is polynomial in $d$. Fix some
$\varepsilon > \varepsilon_2^U$. By Lemma \ref{res:icompl_iso} (i), there is $\tilde C_{s,p} > 0$ such that
 \[ 
  n(\varepsilon,d) \leq d^{\tilde C_{s,p} (1/ \varepsilon)^{p/s}}.
 \]
 Since $\varepsilon > \varepsilon_2^U$ the above estimate holds true for all $d > C_{s,p}^{p/s} (1/\varepsilon)^{p/s}$.
Consequently, the problem cannot suffer from the curse of dimensionality.
 \item To prove intractability is suffices that there is a sequence $(\varepsilon_i, d_i)_{i \in \N}$ such that the
limit in \eqref{eq:wtrac} does not exist. Let $(d_i)_{i \in \N}$ with $d_i \in \N$ and $d_i \to \infty$ for $i \to
\infty$. Let $\varepsilon_2^L \leq \varepsilon_i \leq c_{s,p} (1/d_i)^{s/p}$. Then,
 \[ 
  c_{s,p}^{p/s} 2^{-p} (1/\varepsilon_i)^{p/s} \leq d_i \leq c_{s,p}^{p/s} (1/\varepsilon_i)^{p/s},
 \]
 and thus
 \[ 
  \frac{\log n(\varepsilon_i,_id)}{d_i+ 1/\varepsilon_i} \geq c_{s,p} \frac{(1/\varepsilon_i)^{p/s}}{d+1/\varepsilon_i}
\geq c_{s,p} \frac{(1/\varepsilon_i)^{p/s}}{c_{s,p}^{p/s} (1/\varepsilon_i)^{p/s}  + 1/\varepsilon_i}. 
 \]
 Finally, since $p/s \geq 1$, we have $1/\varepsilon_i \leq (1/\varepsilon_i)^{p/s}$ and thus
 \[ 
 \frac{\log n(\varepsilon_i,d_i)}{d_i + 1/\varepsilon_i} \geq \frac{c_{s,p}}{c_{s,p}^{p/s} + 1} > 0 \quad \text{for all
} i \in \N.
 \]
 In consequence, the problem is not weakly tractable and must be intractable.
 \item We have to show that the information complexity grows slower than both $2^{1/\varepsilon}$ and $2^d$. Put
$x=1/\varepsilon + d$. Since both $1/\varepsilon \leq x$ and $d \leq x$, we have for all $\varepsilon$ and all $d$ that
 \[ 
  \log n(\varepsilon,d) \leq \tilde C_{s,p} \log(x) x^{p/s}.
 \]
 Hence, $\lim_{x \to \infty} \log n(\varepsilon,d) / x = 0$ as $p < s$.
\end{enumerate}\end{proof}

The tractability analysis for the approximation problem $\Id: G^{\alpha,\beta,p}(\T^d) \to L_2(\T^d)$ can be reduced to
the tractability analysis for the problem $\Id: H^{\mathbf{w}_p}(\T^d) \to L_2(\T^d)$. Basis is the
following general observation.

\begin{lemma}\label{res:icompl_charac}
Let $\mathbf{w}$ be an arbitrary weight sequence and let
\[ 
 n^{\mathbf{w}}(\varepsilon,d) := \min \{n \in \N: a_n(\Id: H^{\mathbf{w}}(\T^d) \to L_2(\T^d)) \leq \varepsilon \}.
\]
For $\varphi: [0,\infty) \to [0,\infty)$ monotonically increasing, consider the weight sequence $\varphi(\mathbf{w})$
given by $\varphi(\mathbf{w})(0) := 1$ and $\varphi(\mathbf{w})(k) := \varphi(w(k))$ for $k \in \Z^d \setminus \{0\}$.
We have
\[ 
 n^{\varphi(\mathbf{w})}(\varepsilon,d) = n^{\mathbf{w}}(1/\varphi(1/\varepsilon),d).
\]
\end{lemma}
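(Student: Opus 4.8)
The plan is to reduce the statement entirely to the rearrangement description of approximation numbers of weighted spaces, namely to formula~\eqref{eq:appr_nr_rearr} that was established at the beginning of Section~\ref{sec:approximation_numbers}. Nothing new about operators is needed: once \eqref{eq:appr_nr_rearr} is in place, the claim is a purely combinatorial identity relating two non-increasing rearrangements and the definition of the information complexity.

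Concretely, I would proceed in three steps. \emph{First}, invoke \eqref{eq:appr_nr_rearr} twice: writing $(\sigma_n)_{n\in\N}$ for the non-increasing rearrangement of $(1/w(k))_{k\in\Z^d}$, one has $a_n(\Id:H^{\mathbf w}(\T^d)\to L_2(\T^d))=\sigma_n$, and writing $(\gamma_n)_{n\in\N}$ for the non-increasing rearrangement of $(1/\varphi(\mathbf w)(k))_{k\in\Z^d}$, one has $a_n(\Id:H^{\varphi(\mathbf w)}(\T^d)\to L_2(\T^d))=\gamma_n$. \emph{Second}, repeat the observation already used in the proof of Theorem~\ref{res:approx_entropy_mono}: since $\varphi$ is monotonically increasing, so is $t\mapsto 1/\varphi(1/t)$, hence applying this map componentwise to the non-increasing sequence $(\sigma_n)$ preserves the order; combined with $1/\varphi(\mathbf w)(k)=1/\varphi(1/(1/w(k)))$ for $k\neq 0$ this identifies $(\gamma_n)$ as the non-increasing rearrangement of $1/\varphi(1/\sigma_n)$, i.e. $\gamma_n=1/\varphi(1/\sigma_n)$ for $n\in\N\setminus\{1\}$, exactly as in \eqref{eq:rearr_mon}. \emph{Third}, translate this pointwise relation into a statement about information complexities: by monotonicity of $\varphi$ the condition $\gamma_n\le\varepsilon$, written $1/\varphi(1/\sigma_n)\le\varepsilon$, is equivalent to $\sigma_n\le 1/\varphi(1/\varepsilon)$, so the smallest admissible index $n$ is the same on both sides and $n^{\varphi(\mathbf w)}(\varepsilon,d)=n^{\mathbf w}(1/\varphi(1/\varepsilon),d)$ drops out.

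The only point that needs genuine care — and the part I expect to be the main obstacle — is the bookkeeping at the zero frequency and at the initial index $n=1$, where $\varphi(\mathbf w)(0)$ is set to $1$ rather than to $\varphi(w(0))$, together with the consistent handling of the strict-versus-non-strict inequalities implicit in the definition of the non-increasing rearrangement. Here one uses that the initial error is $a_1=1$ for the weight sequences in question, so that both $n^{\varphi(\mathbf w)}(\varepsilon,d)$ and $n^{\mathbf w}(\cdot,d)$ are at least $2$ as soon as the accuracy is $<1$, and the $n=1$ case of step two is irrelevant for the minimal index. A convenient way to organize step three so that these boundary issues are dealt with uniformly is to replace the rearrangement manipulation by the counting characterization $n^{\mathbf v}(\varepsilon,d)=1+\sharp\{k\in\Z^d:1/v(k)>\varepsilon\}$, valid for any admissible weight sequence $\mathbf v$, and then read off the asserted identity by comparing the two counting conditions via the monotonicity of $\varphi$. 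Either route is short; the entire substance sits in \eqref{eq:appr_nr_rearr} and in the monotonicity of $\varphi$.
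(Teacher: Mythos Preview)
Your overall strategy is exactly the paper's: invoke \eqref{eq:appr_nr_rearr} twice, use the identity $\gamma_n = 1/\varphi(1/\sigma_n)$ from \eqref{eq:rearr_mon}, and then translate the threshold condition on $\gamma_n$ into one on $\sigma_n$. The paper's proof is a three-line calculation following precisely this outline.

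There is, however, a genuine slip in your third step. You claim that $1/\varphi(1/\sigma_n)\le\varepsilon$ is equivalent to $\sigma_n\le 1/\varphi(1/\varepsilon)$. This is false in general: from $1/\varphi(1/\sigma_n)\le\varepsilon$ one gets $\varphi(1/\sigma_n)\ge 1/\varepsilon$, hence by monotonicity $1/\sigma_n\ge \varphi^{-1}(1/\varepsilon)$, i.e.\ $\sigma_n\le 1/\varphi^{-1}(1/\varepsilon)$. So the correct conclusion is
\[
n^{\varphi(\mathbf w)}(\varepsilon,d)=n^{\mathbf w}\!\big(1/\varphi^{-1}(1/\varepsilon),d\big),
\]
which is exactly what the paper's own proof writes in its final line. (To see that your version cannot be right, take $\varphi(t)=t^2$: then $\gamma_n=\sigma_n^2$, and $\sigma_n^2\le\varepsilon$ is equivalent to $\sigma_n\le\sqrt{\varepsilon}=1/\varphi^{-1}(1/\varepsilon)$, not to $\sigma_n\le\varepsilon^2=1/\varphi(1/\varepsilon)$.) In other words, the displayed formula in the statement of the lemma contains a typo; the paper's proof silently corrects it to $\varphi^{-1}$, and its subsequent applications (e.g.\ in the proof of Theorem~\ref{res:tractabibility_gevrey}, where $\tilde\varepsilon_i^U=1/\varphi(1/\varepsilon_i^U)$ arises from inverting the relation $\varepsilon'=1/\varphi^{-1}(1/\varepsilon)$) are consistent with the corrected version. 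Your alternative counting characterization $n^{\mathbf v}(\varepsilon,d)=1+\sharp\{k:1/v(k)>\varepsilon\}$ would also lead to $\varphi^{-1}$, for the same reason.
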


\begin{proof}
Let $(\sigma_n)_{n \in \N}$ and $(\gamma_n)_{n \in \N}$ be the non-increasing rearrangements of\\ $1/\mathbf{w}$ and 
$1/\varphi(\mathbf{w})$, respectively. Then, using \eqref{eq:rearr_mon}, we obtain
\begin{align*} 
 n^{\varphi(\mathbf{w})}(\varepsilon,d) &= \min \{n \in \N: \gamma_n \leq \varepsilon\}
                  = \min \{n \in \N: 1/\varphi(1/\sigma_n) \leq \varepsilon \}\\
                  &= n^{\mathbf{w}}(1/\varphi^{-1}(1/\varepsilon),d).
\end{align*}
\end{proof}

\begin{proof}[Proof of Theorem \ref{res:tractabibility_gevrey}]
With $\varphi(t) = \exp(\beta t^\alpha)$ and $\gamma = p$, Lemma \ref{res:icompl_charac} in combination with Lemma
\ref{res:icompl_iso} (i) yields
\begin{align*}
\ln n(\varepsilon,d) \lesssim_{\alpha,\beta,p}
\begin{cases}
 \ln(d) &: \tilde \varepsilon_1^U \leq \varepsilon \leq 1\\
  \ln(d) \ln(1/\varepsilon)^{p/\alpha} &: \tilde\varepsilon_2^U \leq \varepsilon \leq \tilde\varepsilon_1^U\\
  \ln(\ln(1/\varepsilon)) \ln(1/\varepsilon)^{p/\alpha} &:\tilde\varepsilon_3^U \leq \varepsilon \leq
\tilde\varepsilon_2^U\\
  \ln(\ln(1/\varepsilon)) \ln(1/\varepsilon)^{p/(2\alpha)} &: \varepsilon \leq \tilde\varepsilon_3^U 
\end{cases}
\end{align*}
where $\tilde\varepsilon_i^U = 1/\varphi(1/\varepsilon_i^U)$. Since in the third case we may estimate
$\ln(\ln(1/\varepsilon)) \lesssim_{\alpha,\beta,p} \ln(d)$ due to $\tilde \varepsilon_3^U \leq \varepsilon$ and in the
forth case we may estimate $\ln(\ln(1/\varepsilon)) \lesssim_{\alpha,\beta,p} \ln(1/\varepsilon)^{p/(2\alpha)}$, we
obtain
\[ 
 \ln n(\varepsilon,d) \lesssim_{\alpha,\beta,p} \ln(d) \ln(1/\varepsilon)^{p/\alpha}
\]
for all $0< \varepsilon \leq 1$ and $d \in \N$, which leads to quasi-polynomial tractability if $\alpha \geq p$. That
$\alpha  \geq p$ is also a necessary condition for quasi-polynomial tractability follows immediately by Lemma
\ref{res:icompl_charac} and Lemma \ref{res:icompl_iso} (ii).
\end{proof}

\begin{remark}\label{rem:KSU13}
The tractability of approximating the identity $\Id: H^{s,p}(\T^d) \to L_2(\T^d)$ by finite-rank operators has already
been studied in \cite{KSU13} for $p=1$, $p=2$, and $p=2s$. In the case $p=2$, however, the authors could not show
whether the problem is intractable or weakly tractable when $1 < s \leq 2$, see \cite[Thm. 5.5, Cor. 5.7]{KSU13}. The
results from Section \ref{sec:approximation_numbers} allow to close this gap and furthermore to reproduce all
tractability results obtained in \cite{KSU13}. For a different proof that allows to close the gap, we refer to the
recent paper \cite{SW14}.
\end{remark}

\begin{remark}
Concerning the standard notions of tractability, asking for compressibility of frequency vectors ($0 < p \leq 1$) only
has the effect that we need less smoothness to obtain weak tractability, see Theorem \ref{res:tractability_iso}, (iv).
To get a comprehensive understanding of the effect of compressibility, we need two additional notions of tractability
introduced only recently. For $\alpha, \beta > 0$ a problem is called \emph{($\alpha$, $\beta$)-weakly tractable}
\cite{SW14} if
\[ 
 \lim\limits_{1/\varepsilon+d\to\infty} \frac{\log n(\varepsilon,d)}{1/\varepsilon^\alpha + d^\beta} = 0\,.
\]
A problem is called \emph{uniformly weakly tractable} \cite{S12} if it is ($\alpha$,$\beta$)-weakly tractable for all
$\alpha$,$\beta > 0$.
From Lemma \ref{res:icompl_iso} we can conclude that the approximation problem $\Id: H^{s,p}(\T^d) \to L_2(\T^d)$ is
\emph{($\alpha$, $\beta$)-weakly tractable} for $\alpha > p/s$ and all $\beta > 0$ (which has also been observed in
\cite[Thm 4.1]{SW14}). Hence, if we impose a very strong compressibility constraint---which means that $p$ gets
small---then we have almost \emph{uniform weak tractability}.
\end{remark}


\begin{remark}\label{rem:Oesis}
The recent paper \cite{DKPW13} studies the tractability of approximating embeddings $\Id: H^{\mathbf{w}}(\T^d) \to
L_2(\T^d)$ by operators of finite-rank for weight sequences $\mathbf{w}$ of the form $w(k) =  \omega^{\sum_{j=1}^d a_j
|k_j|^{b_j}}$, $k \in \Z^d$, where $\omega > 1$, $0 < a_1 \leq a_2 \leq a_3 \leq \dots$ and $\inf b_j > 0$.
Let
\[ 
 n^{\mathbf{w}}(\varepsilon,d) := \min \{n \in \N: a_n(\Id: H^{\mathbf{w}}(\T^d) \to L_2(\T^d)) \leq \varepsilon \}
\]
be the information complexity of the approximation problem. In \cite{DKPW13} it is studied under which conditions a
modified, stronger notion of weak tractability is satisfied, namely
\begin{align}\label{eq:modifiet_wt} 
 \lim_{\ln(1/\varepsilon)+d \to \infty} \frac{\ln n^{\mathbf{w}}(\varepsilon,d)}{\ln(1/\varepsilon)+d} = 0.
\end{align}

The Gevrey weights $\mathbf{w}^G_{\alpha,\beta,p}$, defined in \eqref{eq:gevrey_weights}, fit into the setting of \cite{DKPW13} if $\alpha = p$ (by
choosing $a_1=a_2=\cdots=\beta$ and $b_1=b_2=\cdots=p$). From \cite[Thm. 1]{DKPW13} it is immediately clear that the approximation problem $\Id: G^{\alpha,\beta,p}(\T^d) \to
L_2(\T^d)$ is not weakly tractable in the above sense if $\alpha=p$. What more can be said? From Lemma
\ref{res:icompl_charac} we get
\[ 
 \lim_{1/\varepsilon + d} \frac{\ln n^{\mathbf{w}_{\alpha,p}}(\varepsilon,d)}{d + 1/\varepsilon} = \lim_{1/\varepsilon +
d} \frac{\ln n^{\mathbf{w}^G_{\alpha,\beta,p}}(\varepsilon,d)}{d + \ln(1/\varepsilon)}.
\]
Hence the approximation problem $\Id: G^{\alpha,\beta,p}(\T^d) \to L_2(\T^d)$ is weakly tractable in the modified sense
\eqref{eq:modifiet_wt} if and only if the approximation problem $\Id: H^{\alpha,p}(\T^d) \to L_2(\T^d)$ is weakly
tractable in the classical sense, which is the case if and only if $\alpha > p$. We conclude that weak tractability in the modified sense \eqref{eq:modifiet_wt} is almost
equivalent to quasi-polynomial tractability for the approximation problem
$\Id: G^{\alpha,\beta,p}(\T^d) \to L_2(\T^d)$, cf. Theorem \ref{res:tractabibility_gevrey}.

As a final remark let us point out that other than claimed in \cite{DKPW13} the space $H^{\mathbf{w}}(\T^d)$ consists of
analytic functions if and only if $\inf b_j \geq 1$. The proof provided in \cite[Section 10]{DKPW13} is wrong, and even under the additional assumption $\inf b_j \geq 1$ incomplete as it only shows convergence of the Taylor expansion. For a correct proof, see \cite{IKLP15}.
\end{remark}

\section*{Acknowledgments}
The authors would like to thank David Krieg, Erich Novak, Winfried Sickel, Markus Weimar, and Henryk Wo{\'z}niakowski for fruitful discussions. Furthermore, they would like to thank two anonymous referees for their valuable comments
on an earlier version of this manuscript. Tino Ullrich and Sebastian Mayer gratefully acknowledge support by the German Research
Foundation (DFG) Ul-403/2-1 as well as the Emmy-Noether programme, Ul-403/1-1. Thomas K\"uhn
was supported in part by the Spanish Ministerio de Econom\'ia y Competitividad (MTM2013-42220-P).

\bibliographystyle{abbrv}
\bibliography{references}
\end{document}